\title{On the Numerical Evaluation of Fredholm Determinants}
\author{Folkmar Bornemann\thanks{Zentrum Mathematik, Technische Universität München,
        Boltzmannstr. 3, 85747 Garching, Germany ({\tt bornemann@ma.tum.de}). Manuscript
        as of \today.}}
\begin{document}

\maketitle

\begin{abstract} Some significant quantities in mathematics and physics are most naturally expressed as the Fredholm determinant of an integral operator, most notably
many of the distribution functions in random matrix theory. Though their numerical values are of interest, there is no systematic numerical treatment of Fredholm determinants
to be found in the literature. Instead, the few numerical evaluations that are available rely on eigenfunction expansions of the operator, if expressible in terms of special functions,
or on alternative, numerically more straightforwardly accessible analytic expressions, e.g., in terms of Painlevé transcendents, that have masterfully been derived in some cases.
In this paper we close the gap in the literature by studying projection methods and, above all, a simple, easily implementable, general method for the numerical evaluation
of Fredholm determinants that is derived from the classical
Nyström method for the solution of Fredholm equations of the second kind. Using Gauss--Legendre or Clenshaw--Curtis as the underlying quadrature rule,
we prove that the approximation error essentially behaves like the quadrature error for the sections of the kernel. In particular, we get exponential convergence for
analytic kernels, which are typical in random matrix theory.
The application of the method to the distribution functions of the Gaussian unitary ensemble (GUE), in the bulk and the edge scaling limit, is discussed in detail.
After extending the method to systems of integral operators, we evaluate the two-point correlation functions of the more recently studied Airy and $\text{Airy}_1$ processes.
\end{abstract}
\begin{keywords}
Fredholm determinant, Nyström's method, projection method, trace class operators, random matrix theory, Tracy--Widom distribution, Airy and $\text{Airy}_1$ processes
\end{keywords}
\begin{AMS}
65R20, 65F40, 47G10, 15A52
\end{AMS}

\section{Introduction}\label{sect:intro}

\possessivecite{34.0422.02} landmark paper\footnote{\citeasnoun[p.~VII]{MR2154153} writes:
``This deep paper is extremely readable and I recommend it to those wishing a pleasurable afternoon.''
An English translation of the paper can be found in \citeasnoun[pp.~449--465]{MR0469612}.} on linear integral
equations is generally considered to be the forefather of those mathematical concepts that
finally led to modern functional analysis and operator theory---see the historical accounts in
\citeasnoun[pp.~1058--1075]{MR0472307} and \citeasnoun[Chap.~V]{MR605488}. Fredholm was interested
in the solvability of
what is now called a Fredholm equation of the second kind,
\begin{equation}\label{eq:fred1}
u(x) + z \int_a^b K(x,y) u(y)\,dy = f(x) \qquad (x \in(a,b)),
\end{equation}
and explicit formulas for the solution thereof,
for a right hand side $f$ and a kernel $K$, both assumed to be continuous functions. He introduced his now famous
determinant
\begin{equation}\label{eq:det1}
d(z) = \sum_{k=0}^\infty \frac{z^n}{n!} \int_a^b \cdots \int_a^b \det\left(K(t_p,t_q)\right)_{p,q=1}^n \,dt_1\cdots\,dt_n,
\end{equation}
which is an entire function of $z\in\C$, and succeeded in showing that the integral equation is uniquely solvable if and only if $d(z)\neq 0$.

Realizing the tremendous potential of Fredholm's theory, Hilbert started working on integral equations in a flurry and, in a series
of six papers from 1904 to 1910,\footnote{Later reproduced as one of the first books on linear integral equations \cite{43.0423.01}.} transformed the determinantal framing to
the beginnings of what later, in the hands of Schmidt, Carleman, Riesz, and others, would become the theory of compact operators in Hilbert spaces. Consequently, over the years Fredholm determinants
have faded from the core of general accounts on integral equations to the historical remarks section---if they are mentioned at all.\footnote{For example, \citeasnoun{43.0423.01}, \citeasnoun{53.0453.01}, and \citeasnoun{53.0180.04}
start with the Fredholm determinant right from the beginning; yet already \citeasnoun[pp.142--147]{MR0065391}, the translation of the German edition from  1931, give it just a short mention
(``since we shall not make any use of the Fredholm formulas later on''); while \citeasnoun[Chap.~V]{MR0104991} and \citeasnoun[Chap.~7]{MR0390680}, acknowledging
the fact that ``classical'' Fredholm theory yields a number of results that functional analytic techniques do not, postpone Fredholm's theory to a later chapter;
whereas \citeasnoun{MR467215}, \citeasnoun{MR1111247}, \citeasnoun{MR1193030},
\citeasnoun{MR1350296}, and \citeasnoun{MR1723850}
ignore the Fredholm determinant altogether. Among the newer books on linear integral equations, the monumental four volume work of 
\citeasnoun{Fenyo} is one of the few we know of
 that give Fredholm determinants a balanced treatment.}

So, given this state of affairs, why then study the numerical evaluation of the Fredholm determinant $d(z)$?
The reason is, simply enough, that the Fredholm determinant and the more general notions, generalizing (\ref{eq:fred1}) and (\ref{eq:det1}) to
\[
u + z A u = f,\qquad  d(z) = \det(I+z A),
\]
for certain classes of compact operators $A$ on Hilbert spaces, have always remained important tools in operator theory and mathematical physics \cite{MR1744872,MR2154153}.
In turn, they have found many significant applications: e.g.,
in atomic collision theory \cite{MR0044404,Moi77}, inverse scattering \cite{MR0406201}, in Floquet theory of periodic differential equations \cite{0287.34016}, in the infinite-dimensional method of stationary phase
and Feynman path integrals \cite{MR0474436,MR1284645},
as the two-point correlation function of the
two-dimensional Ising model \cite{Wilk78},
in renormalization in quantum field theory \cite{MR2154153},
as distribution functions in random matrix theory \cite{MR2129906,MR1677884,MR1659828}
and combinatorial growth processes \cite{Johan00,MR1933446,Sasa05,MR2363389}. As \citeasnoun[p.~260]{MR1892228} puts it most aptly  upon including
Fredholm's theory as a chapter of its own in his recent textbook on functional analysis: ``Since this determinant appears in some modern theories, it is time to resurrect it.''

In view of this renewed interest in operator determinants, what numerical methods are available for their evaluation? Interestingly, this question has apparently never---at least to our knowledge---been systematically addressed
in the numerical analysis literature.\footnote{Though we can only speculate
about the reasons, there is something like a disapproving attitude towards determinants in general that seems to be quite common among people working in ``continuous'' applied mathematics. Here are a few scattered examples: \citeasnoun{MR17382} writes at the beginning of the chapter on determinants (p.~460): ``Today matrix and linear algebra are in the main stream of applied mathematics, while the role
of determinants has been relegated to a minor backwater position.'' \citeasnoun{Axler95} has a paper with the provocative title ``Down with Determinants!'' and a correspondingly worked out textbook on linear algebra \citeyear{Axler97}.
The quintessential book of \citeasnoun{MR1417720} on ``Matrix Computations'' does not explicitly address the computation of determinants at all, it is only implicitly stated as part of Theorem~3.2.1.
\citeasnoun{MR1927606} writes at the beginning of Section 14.6: ``Like the matrix inverse, the determinant is a quantity that rarely needs to be computed.'' He then continues with the argument, well known to every numerical
analyst, that the determinant cannot be used as a measure of ill conditioning since it scales as $\det(\alpha A) = \alpha^m \det(A)$  for a $m\times m$-matrix $A$, $\alpha \in \R$.
Certainly there is much truth in all of their theses, and Cramer's rule and the characteristic polynomial, which were the most common reasons for a call
to the numerical evaluation of determinants \cite[p.~176]{MR1653546}, have most righteously been banned from the toolbox of a numerical analyst for reasons of efficiency.
However, with respect to the infinite dimensional case, the elimination of determinants from the thinking of numerical analysts as a subject of computations might have been all too successful. For instance, the
scaling argument does not apply in the infinite dimensional case: operator determinants $\det(I+A)$ are defined for compact perturbations of the identity,
which perfectly determines the scaling since, for $\alpha \neq 1$, $\alpha (I+A)$ cannot
be written in the form $I+\tilde A$ with another compact operator $\tilde A$. (This is because the identity operator is \emph{not} compact then.)}
Even experts in the applications of Fredholm determinants commonly seem to have been thinking \cite{Spohn1} that an evaluation is only possible
if either the eigenvalues of the integral operator are, more or less, explicitly known or if an alternative analytic expression has been found that is numerically more accessible---in each specific case anew,
lacking a general procedure.

\medskip

\paragraph{The Nyström-type method advocated in this paper}
In contrast, we study a simple general numerical method for Fredholm determinants which is exceptionally efficient for smooth
kernels, yielding small \emph{absolute} errors (i.e., errors that are small with respect to the scale $\det(I)=1$ inherently given by the operator determinant).
To this end we follow the line of thought of \possessivecite{Ny30} classical quadrature method for the numerical solution of the Fredholm equation (\ref{eq:fred1}).
Namely, given a quadrature rule
\[
Q(f) = \sum_{j=1}^m w_j\, f(x_j) \approx \int_a^b f(x)\,dx,
\]
\citename{Ny30} discretized (\ref{eq:fred1}) as the linear system
\begin{equation}\label{eq:ny1}
u_i + z \sum_{j=1}^m w_j K(x_i,x_j) u_j = f(x_i)\qquad (i=1,\ldots,m),
\end{equation}
which has to be solved for $u_i \approx u(x_i)$ ($i=1,\ldots,m)$. Nyström's method is extremely simple and, yet, extremely effective for \emph{smooth} kernels. So much so that
\citeasnoun[p.~245]{MR837187}, in a chapter comparing
different numerical methods for Fredholm equations of the second kind, write:
\medskip\begin{quote}
Despite the theoretical and practical care lavished on the more complicated algorithms, the clear winner of this contest has been the Nyström routine with the $m$-point Gauss--Legendre rule. This routine is extremely simple;
it includes a call to a routine which provides the points and weights for the quadrature rule, about twelve lines of code to set up the Nyström equations and a call to the routine which solves these equations. Such results
are enough to make a numerical analyst weep.
\end{quote}\medskip

\noindent
By keeping this conceptual and algorithmic simplicity,
the method studied in this paper approximates the
Fredholm determinant $d(z)$ simply by the determinant of the $m\times m$-matrix that is applied to the vector $(u_i)$ in the Nyström equation~(\ref{eq:ny1}):
\begin{equation}\label{eq:ny}
d_Q(z) = \det \left(\delta_{ij} + z\,w_i K(x_i,x_j)\right)_{i,j=1}^m.
\end{equation}
If the weights $w_j$ of the quadrature rule are positive (which is always the better choice), we will use the \emph{equivalent} symmetric variant
\begin{equation}\label{eq:ny2}
d_Q(z) = \det\left(\delta_{ij} + z\, w_i^{1/2} K(x_i,x_j) w_j^{1/2}\right)_{i,j=1}^m.
\end{equation}
Using Gauss--Legendre or Curtis--Clenshaw quadrature rules, the computational cost\footnote{The computational cost of $O(m^3)$ for the matrix determinant using either
Gaussian elimination with partial pivoting \cite[p.~176]{MR1653546} or Hyman's
method \cite[Sect.~14.6]{MR1927606} clearly dominates the cost of $O(m\log m)$ for the weights and points of Clenshaw--Curtis quadrature using the FFT,
as well as the cost of $O(m^2)$ for Gauss--Legendre quadrature using the Golub--Welsh algorithm; for implementation details of these quadrature rules see \citeasnoun{MR2214855}, \citeasnoun{Tref08}, and
the appendix of this paper.
The latter paper carefully compares Clenshaw--Curtis with Gauss--Legendre and concludes (p.~84): ``Gauss quadrature is a beautiful and powerful idea. Yet the Clenshaw--Curtis formula has essentially the same performance
for most integrands.''}
of the method is of order $O(m^3)$. The implementation in Matlab, or Mathematica,
is straightforward and takes just a few lines of code.\footnote{The command {\tt [w,x] = QuadratureRule(a,b,m)} is supposed to supply the weights and
points of a $m$-point quadrature rule on the interval $[a,b]$ as a $1\times m$ vector {\tt w} and a $m\times 1$-vector {\tt x}, respectively. For Gauss--Legendre and Clenshaw--Curtis,
such a Matlab code
can be found in the appendix.} In Matlab:

\medskip\begin{quote}
\begin{verbatim}
function d = DetNyström(K,z,a,b,m)
[w,x] = QuadratureRule(a,b,m);
w = sqrt(w);
[xj,xi] = meshgrid(x,x);
d = det(eye(m)+z*(w'*w).*K(xi,xj));
\end{verbatim}
\end{quote}\medskip

\noindent In Mathematica:
\smallskip

\begin{center}\label{prog:math}
\hspace*{1mm}\includegraphics[width=0.835\textwidth]{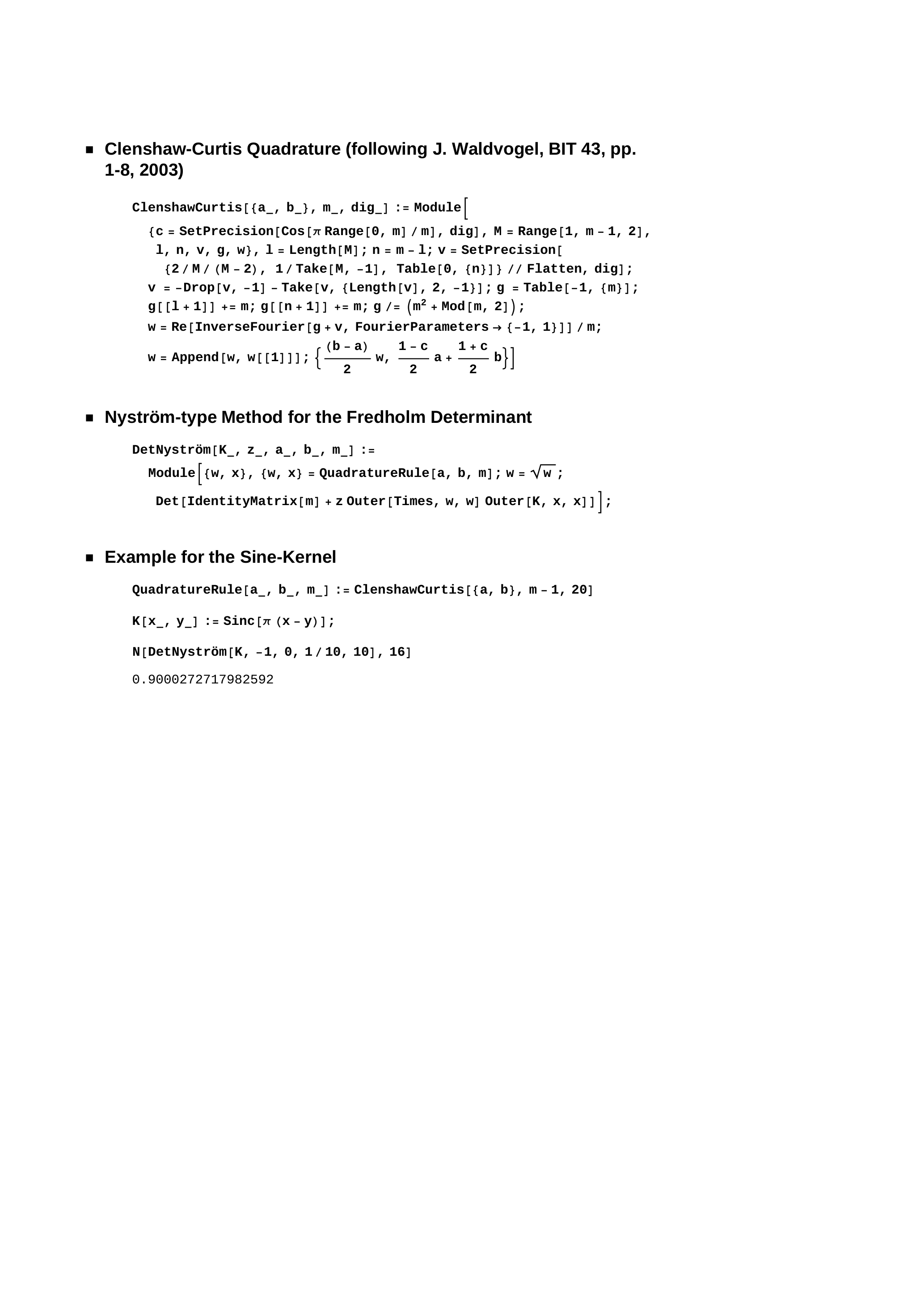}
\end{center}

Strictly speaking we are not the first to suggest this simple method, though. In fact, it was \citename{35.0378.02} himself \citeyear[pp.~52--60]{35.0378.02} in his very first paper on
integral equations, who motivated\footnote{Fredholm himself does not give the slightest hint of a motivation in his early papers \citeyear{Fred00,34.0422.02}.
He apparently conceived his determinant in {\em analogy} to similar expressions that his fellow
countryman \citeasnoun{vonKoch} had obtained for infinite matrices a few years earlier; see \citeasnoun[p.~95]{Fred09}, \citeasnoun[p.~99]{MR605488},
or \citeasnoun[p.~409]{MR2300779}.} the expression (\ref{eq:det1}) of the
Fredholm determinant by essentially using this method with the \emph{rectangular rule} for quadrature, proving locally uniform convergence;
see also \citeasnoun[pp.~4--10]{43.0423.01} and, for the motivational argument given just heuristically,
without a proof of convergence, \citeasnoun[Sect.~11.2]{53.0180.04}, \citeasnoun[pp.~66--68]{MR0094665} (who speaks of a ``poetic license'' to be applied ``without too many scruples''),
\citeasnoun[pp.~65--68]{MR0104991}, \citeasnoun[pp.~243--239]{MR0390680}, and \citeasnoun[Vol.~II, pp.~82--84]{Fenyo}---to
name just a few but influential cases.
Quite astonishingly, despite of all its presence as a motivational tool in the expositions of the classical theory, we have found just one example of the use of this method (with Gauss--Legendre quadrature) in an actual
numerical calculation: a paper by
the physicists \citeasnoun{MR0266459} on
low-energy elastic scattering of electrons from hydrogen atoms.
However, the error estimates (Theorem~\ref{thm:nyerr}) that we will give in this paper seem to be new at least;
we will prove that the approximation error essentially behaves like the quadrature error for the sections $x \mapsto K(x,y)$ and $y \mapsto K(x,y)$ of the kernel. In particular, we will
get exponential
convergence rates for analytic kernels.

\medskip
\paragraph{Examples}\label{ex:intro}
Perhaps the generality and efficiency offered by our direct numerical approach to Fredholm determinants, as compared to analytic methods if they are available at all, is
best appreciated by an example. The probability $E_2(0;s)$ that an interval of length $s$ does not contain, in the bulk scaling limit of level spacing $1$, an eigenvalue
of the Gaussian unitary ensemble (GUE) is given by the Fredholm determinant of the sine kernel,
\[
E_2(0;s) = \det\left(I - A_s\projected{L^2(0,s)}\right),\qquad A_s u(x) = \int_0^s \frac{\sin(\pi(x-y))}{\pi(x-y)} u(y)\,dy\,;
\]
see \citeasnoun{Gau61} and \citeasnoun[Sect.~6.3]{MR2129906}. \citename{Gau61} has further shown that the eigenfunctions of this selfadjoint integral operator
are exactly given by a particular family of special functions, namely the radial prolate spheroidal wave functions with certain parameters. Using tables \cite{MR0074130} of these functions  he was finally able to evaluate $E_2(0;s)$
numerically. On the other hand, in an admirably intricate analytic {tour de force} \citeasnoun{MR573370} expressed the Fredholm determinant of the sine kernel as
\begin{equation}\label{eq:jimbo}
E_s(0;s) = \exp\left(\int_0^{\pi s} \frac{\sigma(x)}{x}\,dx\right)
\end{equation}
in terms of the sigma, or Hirota, representation of the fifth Painlevé equation, namely
\[
(x \sigma'')^2 + 4(x \sigma' - \sigma)(x \sigma' - \sigma + \sigma'^2) = 0,\qquad \sigma(x) \sim -\frac{x}{\pi} - \frac{x^2}{\pi^2} \quad(x \to 0),
\]
see also \citeasnoun[Chap.~21]{MR2129906} and \citeasnoun[Sect.~4.1]{MR1791893}. With respect to the numerical evaluation,
the latter two authors conclude in a footnote, most probably by comparing to Gaudin's method: ``Without the Painlevé representations, the numerical evaluation of the Fredholm determinants is quite involved.'' However,
one does not need to know more than the smooth kernel $\sin(\pi(x-y))/(\pi(x-y))$ ifself to approximate $E_2(0;s)$ with the method of this paper. For instance,
the Gauss--Legendre rule with just $m=5$ quadrature points already gives, in 0.2\,ms computing time,
15~accurate digits of the value
\[
E_2(0,0.1) = 0.90002\,72717\,98259\,\cdots,
\]
that is, by calculating the determinant of a $5\times 5$-matrix easily built from the kernel.

Even though it is satisfying to have an alternative and simpler way of calculating already known quantities, it is far more exciting to be able to calculate quantities that otherwise have defeated numerical evaluations
so far.
For instance, the joint distribution functions of the Airy and the $\text{Airy}_1$ processes are given as determinants of systems of integral operators, see
\citeasnoun{MR1933446}, \citeasnoun{MR2018275}, \citeasnoun{Sasa05} and
\citeasnoun{MR2363389}. Even though a nonlinear partial differential equation of third order in three variables has been found by \citeasnoun[Eq.~(4.12)]{MR2150191} for the logarithm of the joint distribution
function of the Airy process at two different
times, this masterful analytic result is probably of next to no numerical use. And in any case, no such analytic results are yet known for the $\text{Airy}_1$ process. However, the Nyström-type method
studied in this paper can easily
be extended to systems of integral operators. In this way, we have succeeded in evaluating the two-point correlation functions of both stochastic processes,
see Section~\ref{sect:matrixkernels}.

\medskip

\paragraph{Outline of this paper} For the proper functional analytic setting, in Section~\ref{sect:trace} we review some basic facts
about trace class and Hilbert--Schmidt operators. In Section~\ref{sect:det} we review the concept of the determinant $\det(I+A)$ for trace class operators $A$
and its relation to the Fredholm determinant. In Section~\ref{sect:cond} we study perturbation bounds implying that numerical calculations of determinants
can only be expected to be accurate with respect to {\em absolute} errors in general. In Section~\ref{sect:proj} we use the functional analytic formulation of the
problem to obtain convergence estimates for projection methods of Galerkin and Ritz--Galerkin type. The convergence rate is shown to depend on a proper balance between
the decay of the singular values of the operator and the growth of bounds on the derivatives of the corresponding singular functions. This is in sharp contrast with Section~\ref{sect:quad},
where we study the convergence of the Nyström-type method (\ref{eq:ny2}) by directly addressing the original definition of the Fredholm determinant. Here, only the smoothness
properties of the kernel enter the convergence estimates. It turns out that, for kernels of low regularity, the order of convergence of the Nyström-type method can be even higher
than that of a Ritz--Galerkin method. In Section~\ref{sect:random} we give examples for the exponential convergence rates enjoyed by analytic kernels. To this end we discuss
the details of the numerical evaluation of the determinants of the sine and Airy kernels, which express the
probability distributions $E_2(0;s)$ and $F_2(s)$ (the Tracy--Widom distribution) of random matrix theory. Finally, after extending the Nyström-type method to systems of integral
operators we report in Section~\ref{sect:matrixkernels} on the numerical evaluation of the two-point correlation functions of the Airy and $\text{Airy}_1$ processes.

\section{Trace Class and Hilbert--Schmidt Operators}\label{sect:trace}
We begin by recalling some basic material about the spectral theory of nonselfadjoint compact operators, which can be found, e.g., in \citeasnoun{MR1130394}, \citeasnoun{MR1892228} and
\citeasnoun{MR2154153}.
We consider a complex, separable Hilbert space $\mathcal{H}$ with an inner product $\langle\cdot,\cdot\,\rangle$ that is linear in the {\em second} factor and conjugate linear in the first.
The set of bounded linear operators will be denoted by $\mathcal{B}(\mathcal{H})$, the compact operators by $\mathcal{J}_\infty(\mathcal{H})$. The spectrum
of a compact operator $A \in \mathcal{J}_\infty(\mathcal{H})$ has no non-zero limit point; its non-zero points are eigenvalues of finite algebraic multiplicity.
We list these eigenvalues as $(\lambda_n(A))_{n=1}^{N(A)}$, counting multiplicity, where $N(A)$ is either a finite non-negative integer or infinity, and
order them by
\[
|\lambda_1(A)| \geq |\lambda_2(A)| \geq \cdots.
\]
The positive eigenvalues
\[
s_1(A) \geq s_2(A) \geq \cdots > 0
\]
of the associated positive-semidefinite, selfadjoint operator
\[
|A|=(A^* A)^{1/2}
\]
are called the singular values of $A$. Correspondingly, there is the Schmidt or singular-value representation of $A$,
that is, the norm convergent expansion
\begin{equation}\label{eq:schmidt}
A = \sum_{n=1}^{N(|A|)} s_n(A) \langle u_n,\cdot\,\rangle v_n,
\end{equation}
where the $u_n$ and $v_n$ are certain (not necessarily complete) orthonormal sets in $\mathcal{H}$. Note that $s_n(A)=|\lambda_n(A)|$ if $A$ is selfadjoint.
In general we have Weyl's inequality
\begin{equation}\label{eq:weyl}
\sum_{n=1}^N |\lambda_n(A)|^p \;\leq\; \sum_{n=1}^N s_n(A)^p\qquad (N \leq N(A),\;1\leq p< \infty).
\end{equation}
The Schatten--von Neumann classes of compact operators are defined as
\[
\mathcal{J}_p(\mathcal{H}) = \{A \in \mathcal{J}_\infty(\mathcal{H}) \;:\; \sum_{n=1}^{N(|A|)} s_n(A)^p <  \infty \}\qquad (1\leq p < \infty)
\]
with the corresponding norm\footnote{In matrix theory these norms are not commonly used---with the exception of $p=2$: $\|A\|_{\scriptscriptstyle\mathcal{J}_2}$ is then the Schur or Frobenius norm of the matrix $A$.}
\[
\|A\|_{\scriptscriptstyle\mathcal{J}_p} = \left(\sum_{n=1}^{N(|A|)} s_n(A)^p\right)^{1/p}.
\]
The operator norm on $\mathcal{J}_\infty(\mathcal{H})$ perfectly fits into this setting if we realize that
\[
\|A\| = s_1(A) = \max_{n=1,\ldots,N(|A|)} s_n(A) = \|A\|_{\scriptscriptstyle\mathcal{J}_\infty}.
\]
There are the continuous embeddings $\mathcal{J}_p(\mathcal{H}) \subset \mathcal{J}_q(\mathcal{H})$ for $1\leq p \leq q \leq \infty$ with
\[
\|A\|_{\scriptscriptstyle\mathcal{J}_q} \leq \|A\|_{\scriptscriptstyle\mathcal{J}_p}.
\]
The classes $\mathcal{J}_p(\mathcal{H})$ are
two-sided operator ideals in $\mathcal{B}(\mathcal{H})$, that is, for $A \in \mathcal{J}_p(\mathcal{H})$ ($1\leq p\leq\infty$) and $B \in \mathcal{B}(\mathcal{H})$ we have $AB,BA \in \mathcal{J}_p(\mathcal{H})$ with
\begin{equation}\label{eq:ideal}
\|A B\|_{\scriptscriptstyle\mathcal{J}_p} \leq \|A\|_{\scriptscriptstyle\mathcal{J}_p} \|B\|,\qquad \|B A\|_{\scriptscriptstyle\mathcal{J}_p} \leq \|B\|\, \|A\|_{\scriptscriptstyle\mathcal{J}_p}.
\end{equation}
Of special interest to us are the {\em trace class operators} $\mathcal{J}_1(\mathcal{H})$ and the {\em Hilbert--Schmidt operators} $\mathcal{J}_2(\mathcal{H})$. The product of
two Hilbert--Schmidt operators is of trace class:
\[
\|AB\|_{\scriptscriptstyle\mathcal{J}_1} \leq \|A\|_{\scriptscriptstyle\mathcal{J}_2} \|B\|_{\scriptscriptstyle\mathcal{J}_2}\qquad (A,B\in \mathcal{J}_2(\mathcal{H})).
\]
The \emph{trace} of a trace class operator $A$ is defined
by
\[
\tr(A) = \sum_{n=1}^\infty \langle u_n,Au_n\rangle
\]
for any orthonormal basis $(u_n)_n$. A deep theorem of Lidskii's \cite[Chap.~3]{MR2154153} tells us that
\begin{equation}\label{eq:tracedef}
\tr(A) = \sum_{n=1}^{N(A)} \lambda_n(A),
\end{equation}
which implies by Weyl's inequality (\ref{eq:weyl}) that
\begin{equation}\label{eq:sum1lambda}
|\tr(A)| \leq  \sum_{n=1}^{N(A)} |\lambda_n(A)| \leq  \tr(|A|) = \|A\|_{\scriptscriptstyle\mathcal{J}_1}.
\end{equation}
Likewise, for a Hilbert--Schmidt operator $A \in \mathcal{J}_2(\mathcal{H})$ we have
\begin{equation}\label{eq:sum2lambda}
\tr(A^2) = \sum_{n=1}^{N(A)} \lambda_n(A)^2,\qquad |\tr(A^2)| \leq \sum_{n=1}^{N(A)} |\lambda_n(A)|^2 \leq  \|A\|_{\scriptscriptstyle\mathcal{J}_2}^2.
\end{equation}

\medskip
\paragraph{Integral operators with $L^2$-kernel}
In the Hilbert space $\mathcal{H}=L^2(a,b)$ of square-integrable functions on a finite interval $(a,b)$ the Hilbert--Schmidt operators are exactly given by
the integral operators with $L^2$-kernel. That is, there is a one-to-one correspondence \cite[Thm.~2.11]{MR2154153} between $A \in \mathcal{J}_2(\mathcal{H})$ and $K \in L^2((a,b)^2)$
mediated through
\begin{equation}\label{eq:intop}
A u(x) = \int_a^b K(x,y) u(y)\,dy
\end{equation}
with equality of norms $\|A\|_{\scriptscriptstyle\mathcal{J}_2} = \|K\|_{L^2}$: the spaces $\mathcal{J}_2(\mathcal{H})$ and $L^2((a,b)^2)$ are thus isometrically isomorphic.
In particular, by (\ref{eq:sum2lambda}) and a well known basic result on infinite products \cite[p.~232]{MR0183997}, we get for such operators that
\[
\prod_{n=1}^{N(A)} (1+\lambda_n(A)) \text{ converges (absolutely) } \;\Leftrightarrow\; \sum_{n=1}^{N(A)} \lambda_n(A) \text{ converges (absolutely)}.
\]
Since the product is a natural candidate for the definition of $\det(I+A)$ it makes sense requiring $A$ to be of trace class; for then, by (\ref{eq:sum1lambda}),
the absolute convergence of the sum can be guaranteed.

\medskip
\paragraph{Integral operators with a continuous kernel} A continuous kernel $K \in C([a,b]^2)$ is certainly square-integrable. Therefore, the induced integral
operator (\ref{eq:intop}) defines a Hilbert--Schmidt operator $A$ on the Hilbert space $\mathcal{H}=L^2(a,b)$. Moreover, other than for $L^2$ kernels in general, the
integral
\[
\int_a^b K(x,x)\,dx
\]
over the diagonal of $(a,b)^2$ is now well defined and constitutes, in analogy to the matrix case, a ``natural'' candidate for the trace of the integral operator. Indeed,
if an integral operator $A$ with continuous kernel is of trace class, one can prove \cite[Thm.~8.1]{MR1744872}
\begin{equation}\label{eq:tracediag}
\tr(A) = \int_a^b K(x,x)\,dx.
\end{equation}
Unfortunately, however, just the continuity of the kernel $K$ does not guarantee the induced integral operator $A$
to be of trace class.\footnote{A counter-example was discovered by \citeasnoun{Carleman18}, see also \citeasnoun[p.~71]{MR1744872}.}
Yet, there is some encouraging positive experience stated by \citeasnoun[p.~25]{MR2154153}:

\medskip\begin{quote}
However, the counter-examples which prevent nice theorems from holding are generally rather contrived so that I have found the following to be true: If an
integral operator with kernel $K$ occurs in some `natural' way and $\int |K(x,x)|\,dx < \infty$, then the operator can (almost always) be proven to be trace class (although sometimes only
after some considerable effort).
\end{quote}\medskip

\noindent Nevertheless, we will state some simple criteria that often work well:

\smallskip
\begin{enumerate}
\item If the continuous kernel $K$ can be represented in the form
\[
K(x,y) = \int_c^d K_1(x,y) K_2(z,y)\, dz\qquad (x,y \in [a,b])
\]
with $K_1 \in L^2((a,b)\times(c,d))$, $K_2 \in L^2((c,d)\times(a,b))$, then the induced integral operator $A$ is trace class on $L^2(a,b)$. This is, because $A$ can then be written
as the product of two Hilbert--Schmidt operators.
\item If $K(x,y)$ and $\partial_y K(x,y)$ are continuous on $[a,b]^2$, then the induced integral operator $A$ is trace class on $L^2(a,b)$. This is, because we can write $A$ by partial
integration in the form
\[
Au(x) = K(x,b) \int_a^b u(y)\,dy - \int_a^b \left( \int_y^b \partial_z K(x,z)\,dz \right) u(y)\,dy
\]
as a sum of a rank one operator and an integral operator that is trace class by the first criterion. In particular, integral operators with smooth kernels are trace class
\cite[p.~345]{MR1892228}.
\item A continuous Hermitian\footnote{An\label{ft:hermitian} $L^2$-kernel $K$ is Hermitian if $K(x,y) = \overline{K(y,x)}$. This property is equivalent to the fact that the induced Hilbert--Schmidt integral operator $A$ is
selfadjoint, $A^*=A$.} kernel $K(x,y)$ on $[a,b]$ that satisfies a Hölder condition in the second argument with exponent $\alpha>1/2$, namely
\[
|K(x,y_1)-K(x,y_2)| \leq C |y_1 - y_2|^\alpha\qquad (x,y_1,y_2 \in [a,b]),
\]
induces an integral operator $A$ that is trace class on $L^2(a,b)$; see \citeasnoun[Thm.~IV.8.2]{MR1744872}.
\item If the continuous kernel $K$ induces a selfadjoint, positive-semidefinite integral operator $A$, then $A$ is trace class \cite[Thm.~IV.8.3]{MR1744872}. The hypothesis on $A$ is fulfilled
for positive-semidefinite kernels $K$, that is, if
\begin{equation}\label{eq:semidef}
\sum_{j,k=1}^n \overline{z_j}z_k K(x_j,x_k)\geq 0
\end{equation}
for any $x_1,\ldots,x_n \in (a,b)$, $z \in \C^n$ and any $n \in \N$ \cite[p.~24]{MR2154153}.
\end{enumerate}

\section{Definition and Properties of Fredholm and Operator Determinants}\label{sect:det}

In this section we give a general operator theoretical definition of infinite dimensional determinants and study their relation to the Fredholm determinant.
For a trace class operator $A \in \mathcal{J}_1(\mathcal{H})$ there are several equivalent constructions that all define one and the same \emph{entire} function
\[
d(z) = \det(I+zA)\qquad(z \in \C);
\]
in fact, each construction has been chosen at least once, in different places of the literature, as the basic definition of the operator determinant:

\smallskip

\begin{enumerate}
\item \citeasnoun[p.~157]{MR0246142} define the determinant by the locally uniformly convergent (infinite) product
\begin{equation}\label{eq:detlidskii}
\det(I+z A) = \prod_{n=1}^{N(A)} (1+z\lambda_n(A)),
\end{equation}
which possesses   zeros exactly at $z_n=-1/\lambda_n(A)$, counting multiplicity.
\item \citeasnoun[p.~115]{MR1130394} define the determinant as follows.
Given any sequence of finite rank operators $A_n$ with $A_n \to A$ converging in trace class norm, the sequence of finite dimensional determinants\footnote{\citeasnoun{MR1744872}
have later extended this idea
to generally define traces and determinants on embedded algebras of compact operators by a continuous extension from the finite dimensional case. Even within this general
theory the trace class operators enjoy a most unique position: it is only for them
that the values of trace and determinant are \emph{independent} of the algebra chosen for their definition. On the contrary, if $A$ is Hilbert--Schmidt but not trace class,
by varying the embedded algebra, the values of the trace $\tr(A)$ can be given \emph{any} complex number and the values of the determinant $\det(I+A)$ are either always zero or can be
made to take
\emph{any} value in the set $\C\setminus\{0\}$ \cite[Chap.~VII]{MR1744872}.}
\begin{equation}\label{eq:findef}
\det\left(I + z A_n\projected{\range(A_n)}\right)
\end{equation}
(which are
polynomials in $z$) converges locally uniform to $\det(I+z A)$, independently of the choice of the sequence $A_n$. The existence of at least one such sequence follows from the
singular value representation (\ref{eq:schmidt}).
\item \citeasnoun[p.~1029]{MR188745} define the determinant by what is often called Plemelj's formula\footnote{\citeasnoun[Eq.~(62)]{Plemelj} had given a corresponding form of the Fredholm determinant for integral operators.
However, it can already be found in \citeasnoun[p.~384]{34.0422.02}.}
\begin{equation}\label{eq:detdunford}
\det(I+z A) = \exp(\tr\log(I+zA)) = \exp\left(-\sum_{n=1}^\infty \frac{(-z)^n}{n} \tr A^n \right),
\end{equation}
which converges for $|z| < 1/|\lambda_1(A)|$ and can analytically be  continued as an entire function to all $z \in \C$.
\item \citeasnoun[p.~347]{MR0088665} and \citeasnoun[p.~254]{MR0482328} define the determinant most elegantly with a little exterior algebra \cite{MR0224623}.
With $\bigwedge^n(A) \in \mathcal{J}_1(\bigwedge^n(\mathcal{H}))$ being the
$n^{\text{th}}$ exterior product of $A$, the power series
\begin{equation}\label{eq:detgrothen}
\det(I+z A) = \sum_{n=0}^\infty z^n \tr \bigwedge\nolimits^n(A)
\end{equation}
converges for all $z \in \C$. Note that $\tr \bigwedge\nolimits^n(A) = \sum_{i_1<\cdots < i_n} \lambda_{i_1}(A)\cdots\lambda_{i_n}(A)$
is just the $n^{\text{th}}$ symmetric function of the eigenvalues of $A$.
\end{enumerate}
\smallskip
\noindent Proofs of the equivalence can be found in \cite[Chap.~2]{MR1744872} and \cite[Chap.~3]{MR2154153}. We will make use of all of them in the course of this paper.
We state two important properties \cite[Thm.~3.5]{MR2154153} of the operator determinant: First its multiplication formula,
\begin{equation}\label{eq:mult}
\det(I + A + B + A B) = \det(I+B)\det(I+A)\qquad (A,B \in \mathcal{J}_1(\mathcal{H})),
\end{equation}
then the characterization of invertibility: $\det(I+A) \neq 0$ if and only if the inverse operator $(I+A)^{-1}$ exists.

\medskip
\paragraph{The matrix case} In Section~\ref{sect:quad} we will study the convergence of finite dimensional determinants to operator determinants in terms of the
power series (\ref{eq:detgrothen}). Therefore, we give this series a more common look and feel for the case of a matrix $A \in \C^{m\times m}$. By evaluating the traces with respect to a Schur basis of $A$
one gets
\[
\tr \bigwedge\nolimits^n(A) = \sum_{i_1<\cdots <i_n} \det(A_{i_p, i_q})_{p,q=1}^n = \frac{1}{n!} \sum_{i_1,\ldots,i_n=1}^m  \det(A_{i_p, i_q})_{p,q=1}^n,
\]
that is, the sum of all $n \times n$ principal minors of $A$.
The yields the \citeasnoun{vonKoch} form of the matrix determinant
\begin{equation}\label{eq:vonKoch}
\det(I+z A) = \sum_{n=0}^\infty \frac{z^n}{n!} \sum_{i_1,\ldots,i_n=1}^m  \det(A_{i_p, i_q})_{p,q=1}^n\qquad (A \in \C^{m\times m}).
\end{equation}
(In fact, the series must terminate at $n=m$ since $\det(I+zA)$ is a polynomial of degree $m$ in~$z$.)
A more elementary proof of this classical formula, by a Taylor expansion of the polynomial $\det(I+z A)$, can be found, e.g., in \citeasnoun[p.~495]{MR17382}.

\medskip
\paragraph{The Fredholm determinant for integral operators with continuous kernel}
Suppose that the continuous kernel $K \in C([a,b]^2)$ induces an integral operator $A$ that is trace class on the Hilbert space $\mathcal{H}=L^2(a,b)$.
Then, the traces of $\bigwedge^n(A)$ evaluate to \cite[Thm.~3.10]{MR2154153}
\[
\tr \bigwedge\nolimits^n(A) = \frac{1}{n!} \int_{(a,b)^n} \det(K(t_p,t_q))_{p,q=1}^n\,dt_1\cdots\,dt_n \qquad (n=0,1,2,\ldots).
\]
The power series representation (\ref{eq:detgrothen}) of the operator determinant is therefore exactly Fredholm's expression (\ref{eq:det1}), that is,
\begin{equation}\label{eq:detfred}
\det(I+zA) = \sum_{n=0}^\infty \frac{z^n}{n!} \int_{(a,b)^n} \det(K(t_p,t_q))_{p,q=1}^n\,dt_1\cdots\,dt_n.
\end{equation}
The similarity with von Koch's formula (\ref{eq:vonKoch}) is striking and, in fact, it was just an analogy in form that had led Fredholm to conceive his expression for the determinant. It is important to note, however,
that the right hand side of (\ref{eq:detfred}) perfectly makes sense for any continuous kernel, independently of whether the corresponding integral operator is trace class or not.

\medskip
\paragraph{The regularized determinant for Hilbert--Schmidt operators}

For a general Hilbert--Schmidt operator we only know the convergence of $\sum_n \lambda(A)^2$ but not of $\sum_n \lambda_n(A)$. Therefore, the product (\ref{eq:detlidskii}),
which is meant to define $\det(I+zA)$,
is not known to converge in general. Instead, \citeasnoun{35.0378.02} and \citeasnoun{Carleman} introduced the entire function\footnote{In fact, using such exponential
``convergence factors'' is a classical technique in complex analysis to construct, by means of infinite products, entire functions from their intended sets of zeros, see \citeasnoun[Sect.~3.6]{MR1989049}. A famous
example is
\[
\frac{1}{\Gamma(z)} = z e^{\gamma z} \prod_{n=1}^\infty \left(1+\frac{z}{n}\right)e^{-z/n},
\]
which corresponds to the eigenvalues $\lambda_n(A) = 1/n$ of a Hilbert--Schmidt operator that is not trace class.}
\[
\det\nolimits_2(I+z A) = \prod_{n=1}^{N(A)} (1 + z \lambda_n(A)) e^{-z\lambda_n(A)}\qquad (A \in \mathcal{J}_2(\mathcal{H})),
\]
which also has the property to possess zeros exactly at $z_n=-1/\lambda_n(A)$, counting multiplicity. Plemelj's formula (\ref{eq:detdunford}) becomes \cite[p.~167]{MR1744872}
\[
\det\nolimits_2(I+z A) = \exp\left(-\sum_{n=2}^\infty \frac{(-z)^n}{n} \tr A^n \right)
\]
for $|z|<1/|\lambda_1(A)|$, which perfectly makes sense since $A^2$, $A^3$, \ldots are trace class for $A$ being Hilbert--Schmidt.\footnote{Equivalently one can define \cite[p.~50]{MR2154153}
the regularized determinant by
\[
\det\nolimits_2(I+z A) = \det(I + ((I+zA)e^{-z A} - I)) \qquad (A \in \mathcal{J}_2(\mathcal{H})).
\]
This is because one can then show $(I+zA)e^{-z A} - I \in \mathcal{J}_1(\mathcal{H})$. For integral operators $A$ on $L^2(a,b)$ with an $L^2$ kernel $K$, \citeasnoun[p.~82]{35.0378.02}
had found the equivalent expression
\[
\det\nolimits_2(I+z A) = \sum_{n=0}^\infty \frac{z^n}{n!} \int_{(a,b)^n}
\begin{vmatrix}
0 & K(t_1,t_2) & \cdots & K(t_1,t_n) \\*[1mm]
K(t_2,t_1) & 0 & \cdots & K(t_2,t_n)\\*[1mm]
\vdots & \vdots & \ddots & \vdots\\*[1mm]
K(t_n,t_1) & K(t_n,t_2) & \cdots & 0
\end{vmatrix}\,dt_1\cdots\,dt_n,
\]
simply replacing the problematic ``diagonal'' entries $K(t_j,t_j)$ in Fredholm's determinant (\ref{eq:detfred}) by zero. \citeasnoun[Thm.~9.4]{MR2154153} gives an elegant proof of Hilbert's formula.}
Note that for trace class operators we have
\[
\det(I+zA) = \det\nolimits_2(I+z A) \exp(z\cdot\tr A)\qquad (A \in \mathcal{J}_1(\mathcal{H})).
\]
For integral operators $A$ of the form (\ref{eq:intop}) with a continuous kernel $K$ on $\mathcal{H}=L^2(a,b)$  the Fredholm determinant (\ref{eq:det1}) is related to the Hilbert--Carleman
determinant by the equation \cite[p.~250]{MR0390680}
\[
d(z) = \det\nolimits_2(I+z A) \exp(z \int_a^b K(x,x)\,dx)
\]
in general, even if $A$ is not of trace class. It is important to note, though, that if $A \not\in  \mathcal{J}_1(\mathcal{H})$ with  such a kernel,
we have $\int_a^b K(x,x)\,dx \neq \tr(A)$ simply because  $\tr(A)$ is not well defined by~(\ref{eq:tracedef}) anymore then.

\section{Perturbation Bounds}\label{sect:cond}
In studying the conditioning of operator and matrix determinants we rely on the fundamental perturbation estimate
\begin{equation}\label{eq:perturb0}
|\det(I+A) - \det(I+B)| \leq \|A-B\|_{\scriptscriptstyle\mathcal{J}_1} \exp\left(1+\max(\|A\|_{\scriptscriptstyle\mathcal{J}_1},\|B\|_{\scriptscriptstyle\mathcal{J}_1})\right)
\end{equation}
for trace class operators, which can beautifully be proven by means of complex analysis \cite[p.~45]{MR2154153}. This estimate can be put to the form
\begin{equation}\label{eq:perturb1}
|\det(I+(A+E)) - \det(I+A)| \leq e^{1+\|A\|_{\scriptscriptstyle\mathcal{J}_1}} \cdot \|E\|_{\scriptscriptstyle\mathcal{J}_1} + O(\|E\|_{\scriptscriptstyle\mathcal{J}_1}^2),
\end{equation}
showing that the condition number $\kappa_{\text{abs}}$ of the operator determinant $\det(I+A)$, with respect to absolute errors measured in trace class norm, is bounded by
\[
\kappa_{\text{abs}} \leq e^{1+\|A\|_{\scriptscriptstyle\mathcal{J}_1}}.
\]
This bound can considerably be  improved for certain selfadjoint operators that will play an important role in Section~\ref{sect:random}.

\medskip

\begin{lemma}\label{lem:perturb}
Let $A\in \mathcal{J}_1(\mathcal{H})$ be selfadjoint, positive-semidefinite with $\lambda_1(A) < 1$. If $\|E\|_{\scriptscriptstyle\mathcal{J}_1} < \|(I-A)^{-1}\|^{-1}$ then
\begin{equation}\label{eq:perturb2}
|\det(I-(A+E)) - \det(I-A)| \leq \|E\|_{\scriptscriptstyle\mathcal{J}_1}.
\end{equation}
That is, the condition number $\kappa_{\text{abs}}$ of the determinant $\det(I-A)$, with respect to absolute errors measured in trace class norm, is bounded by $\kappa_{\text{abs}} \leq 1$.
\end{lemma}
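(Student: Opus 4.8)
The plan is to reduce the estimate to a one‑variable statement about the entire function $d(z) = \det(I-(A+zE))$ and to exploit the fact that, because $A$ is selfadjoint and positive‑semidefinite with $\lambda_1(A)<1$, \emph{all} the eigenvalues $\lambda_n(A)$ lie in $[0,1)$, so the product representation (\ref{eq:detlidskii}) gives $0 < \det(I-A) = \prod_n(1-\lambda_n(A)) \leq 1$. First I would observe that, by (\ref{eq:detlidskii}) and the invertibility characterization at the end of Section~\ref{sect:det}, $\det(I-A)\neq 0$ and $(I-A)^{-1}$ exists and is a bounded selfadjoint positive operator with $\|(I-A)^{-1}\| = (1-\lambda_1(A))^{-1}$. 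The smallness hypothesis $\|E\|_{\scriptscriptstyle\mathcal{J}_1} < \|(I-A)^{-1}\|^{-1}$ then guarantees, via $\|(I-A)^{-1}E\| \leq \|(I-A)^{-1}\|\,\|E\| < 1$ (using the ideal property (\ref{eq:ideal}) and $\|\cdot\| \leq \|\cdot\|_{\scriptscriptstyle\mathcal{J}_1}$), that $I-(A+E) = (I-A)(I-(I-A)^{-1}E)$ is invertible; hence by the multiplication formula (\ref{eq:mult}),
\begin{equation}\label{eq:pp-factor}
\det(I-(A+E)) = \det(I-A)\cdot\det\bigl(I-(I-A)^{-1}E\bigr).
\end{equation}

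Next I would estimate the second factor in (\ref{eq:pp-factor}). Write $F = (I-A)^{-1}E$, which is trace class with $\|F\|_{\scriptscriptstyle\mathcal{J}_1} \leq \|(I-A)^{-1}\|\,\|E\|_{\scriptscriptstyle\mathcal{J}_1}$ by (\ref{eq:ideal}), and set $\rho = \|F\| < 1$. Using Plemelj's formula (\ref{eq:detdunford}) one has $\det(I-F) = \exp\bigl(\sum_{n\geq 1} \tfrac1n \tr F^n\bigr)$; more simply, by (\ref{eq:detlidskii}), $|\det(I-F)| = \prod_n |1-\lambda_n(F)|$, and from $|1-w| \leq 1+|w|$ together with $\log(1+t)\leq t$ one gets $\log|\det(I-F)| \leq \sum_n |\lambda_n(F)| \leq \|F\|_{\scriptscriptstyle\mathcal{J}_1}$ by Weyl's inequality (\ref{eq:weyl}) and (\ref{eq:sum1lambda}); hence $|\det(I-F)| \leq e^{\|F\|_{\scriptscriptstyle\mathcal{J}_1}}$. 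Combined with (\ref{eq:pp-factor}) and $0 < \det(I-A) \leq 1$ this already yields the crude bound $|\det(I-(A+E)) - \det(I-A)| \leq \det(I-A)\,|\det(I-F)-1| \leq |\det(I-F)-1|$, and $|e^{\|F\|_{\scriptscriptstyle\mathcal{J}_1}}-1|$ is not quite $\leq \|E\|_{\scriptscriptstyle\mathcal{J}_1}$, so a sharper argument is needed for the difference $\det(I-F)-1$ itself.

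The sharper step, which I expect to be the main obstacle, is to show $|\det(I-F) - 1| \leq \|F\|_{\scriptscriptstyle\mathcal{J}_1}$ \emph{exactly}, not just up to the exponential factor. I would prove this by applying the perturbation estimate (\ref{eq:perturb0}) along a scaled path in the style used to derive (\ref{eq:perturb1}): set $g(z) = \det(I-zF)$ for $z$ in the open unit disk (where $\|zF\|<1$ so the path stays in the invertible region), so $g(0)=1$, $g(1)=\det(I-F)$, and $g'(z) = -\tr\bigl((I-zF)^{-1}F\bigr)\,g(z)$ by the standard logarithmic‑derivative formula for operator determinants. Then $|g'(z)| \leq \|(I-zF)^{-1}\|\,\|F\|_{\scriptscriptstyle\mathcal{J}_1}\,|g(z)|$ by (\ref{eq:sum1lambda}) and (\ref{eq:ideal}); the point is that here the relevant operators are selfadjoint, so on the real path $z=t\in[0,1]$ one has $\|(I-tF)^{-1}\| = (1-t\lambda_1(F))^{-1}$ with $\lambda_1(F)\leq\rho<1$, and $g(t)=\prod_n(1-t\lambda_n(F))\in(0,1]$ is real and positive. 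A direct computation gives $g(t) = \prod_n(1-t\lambda_n(F))$, so $1 - g(1) = 1-\prod_n(1-\lambda_n(F)) \leq \sum_n \lambda_n(F)^+ \leq \sum_n|\lambda_n(F)| \leq \|F\|_{\scriptscriptstyle\mathcal{J}_1}$, using $1-\prod(1-a_n)\leq\sum a_n^+$ for the positive part and the fact that negative $\lambda_n(F)$ only decrease the product below $1$ — and the same elementary inequality bounds $g(1)-1$ from above by $\sum_n|\lambda_n(F)|$ as well when some eigenvalues are negative. Hence $|\det(I-F)-1| \leq \|F\|_{\scriptscriptstyle\mathcal{J}_1} \leq \|(I-A)^{-1}\|\,\|E\|_{\scriptscriptstyle\mathcal{J}_1}$. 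Finally, feeding this back into the difference and again using $0<\det(I-A)\leq 1$ together with $\det(I-A)\|(I-A)^{-1}\| = \prod_n(1-\lambda_n(A))\cdot(1-\lambda_1(A))^{-1} = \prod_{n\geq 2}(1-\lambda_n(A)) \leq 1$, I obtain
\begin{equation}\label{eq:pp-final}
|\det(I-(A+E)) - \det(I-A)| = \det(I-A)\,|\det(I-F)-1| \leq \det(I-A)\,\|(I-A)^{-1}\|\,\|E\|_{\scriptscriptstyle\mathcal{J}_1} \leq \|E\|_{\scriptscriptstyle\mathcal{J}_1},
\end{equation}
which is (\ref{eq:perturb2}); the bound $\kappa_{\text{abs}}\leq 1$ follows by taking $\|E\|_{\scriptscriptstyle\mathcal{J}_1}\to 0$ in the corresponding relative form. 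The delicate points to get right are the sign bookkeeping for eigenvalues of $F$ (which need not be positive even though those of $A$ are), and the clean telescoping identity $\det(I-A)\|(I-A)^{-1}\|\leq 1$ that absorbs the factor $\|(I-A)^{-1}\|$.
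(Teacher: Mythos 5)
Your overall skeleton is the paper's: you factor $\det(I-(A+E))=\det(I-A)\det(I-F)$ with $F=(I-A)^{-1}E$ via the multiplication formula (\ref{eq:mult}), and you absorb the factor $\|(I-A)^{-1}\|$ at the end through $\det(I-A)\,\|(I-A)^{-1}\|=\prod_{n\geq 2}(1-\lambda_n(A))\leq 1$, exactly as in the paper; the paper handles the middle factor by Plemelj's formula (\ref{eq:detdunford}), bounding $|\log\det(I-F)|\leq\sum_{n\geq1}x^n/n=\log\bigl((1-x)^{-1}\bigr)$ with $x=\|(I-A)^{-1}\|\,\|E\|_{\scriptscriptstyle\mathcal{J}_1}<1$ and exponentiating. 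Your replacement of that middle step, however, contains a genuine gap. First, $F=(I-A)^{-1}E$ is not selfadjoint: nothing in the lemma makes $E$ selfadjoint (it models, e.g., the componentwise roundoff perturbation (\ref{eq:roundoff})), and even for selfadjoint $E$ a product of two selfadjoint operators need not be selfadjoint. Hence the $\lambda_n(F)$ may be complex, $g(t)=\det(I-tF)$ need not be real or positive, and $\|(I-tF)^{-1}\|=(1-t\lambda_1(F))^{-1}$ has no justification. Second, and independently of selfadjointness, the inequality your plan hinges on, $|\det(I-F)-1|\leq\|F\|_{\scriptscriptstyle\mathcal{J}_1}$, is false: for $F=\mathrm{diag}(-a,-a)$ with $0<a<1$ one has $\det(I-F)-1=(1+a)^2-1=2a+a^2>2a=\|F\|_{\scriptscriptstyle\mathcal{J}_1}$. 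The elementary bound $1-\prod_n(1-a_n)\leq\sum_n a_n$ is genuinely one-sided (it needs $a_n\geq 0$); the mirror statement you invoke to control $g(1)-1$ from above for negative eigenvalues does not hold.

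This obstacle is not an artifact of your method: the two-sided estimate (\ref{eq:perturb2}) itself fails for perturbations that push the spectrum downwards. Take $A=\delta P$ and $E=-\epsilon P$ with $P$ a rank-two orthogonal projection, $\delta$ small and $2\epsilon<1-\delta$; then $\det(I-(A+E))-\det(I-A)=(1-\delta+\epsilon)^2-(1-\delta)^2=2\epsilon(1-\delta)+\epsilon^2$, which exceeds $\|E\|_{\scriptscriptstyle\mathcal{J}_1}=2\epsilon$, e.g., for $\delta=0.01$, $\epsilon=0.4$. (The paper's own proof glosses over exactly this point through the invalid inequality $(1-x)^{-1}\leq 1+x$.) What the common factorization honestly yields is the one-sided bound $1-\det(I-F)\leq x$ together with $\det(I-F)\leq(1-x)^{-1}$, hence $|\det(I-(A+E))-\det(I-A)|\leq\|E\|_{\scriptscriptstyle\mathcal{J}_1}/\bigl(1-\|(I-A)^{-1}\|\,\|E\|_{\scriptscriptstyle\mathcal{J}_1}\bigr)$, which still gives the first-order conclusion $\kappa_{\text{abs}}\leq 1$ but not the clean bound (\ref{eq:perturb2}). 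So your first and last steps are fine, but the central claim of your ``sharper step'' cannot be proved, and the argument needs to be rescoped to this weaker conclusion.
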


\medskip

\begin{proof}
Because of $1>\lambda_1(A)\geq \lambda_2(A) \geq \cdots \geq 0$ there exists the inverse operator $(I-A)^{-1}$. The product formula~(\ref{eq:detlidskii})
 implies $\det(I-A) > 0$, the multiplicativity~(\ref{eq:mult}) of the determinant gives
\[
\det(I-(A+E)) = \det(I-A) \det(I-(I-A)^{-1}E).
\]
Upon applying Plemelj's formula (\ref{eq:detdunford}) and the estimates (\ref{eq:ideal}) and (\ref{eq:sum1lambda}) we get
\begin{multline*}
|\log\det(I-(I-A)^{-1}E)| = \left| \tr\left( \sum_{n=1}^\infty \frac{((I-A)^{-1} E)^n}{n}\right) \right|\\*[2mm]
\leq \sum_{n=1}^\infty \frac{\|(I-A)^{-1}\|^n \cdot\|E\|_{\scriptscriptstyle\mathcal{J}_1}^n}{n} = \log\left(\frac{1}{1-\|(I-A)^{-1}\|\cdot \|E\|_{\scriptscriptstyle\mathcal{J}_1}}\right)
\end{multline*}
if $\|(I-A)^{-1}\|\cdot \|E\|_{\scriptscriptstyle\mathcal{J}_1} < 1$. Hence, exponentiation yields
\begin{multline*}
1-\|(I-A)^{-1}\|\cdot \|E\|_{\scriptscriptstyle\mathcal{J}_1} \leq \det(I-(I-A)^{-1}E)\\*[2mm]
 \leq \frac{1}{1-\|(I-A)^{-1}\|\cdot \|E\|_{\scriptscriptstyle\mathcal{J}_1}} \leq 1+\|(I-A)^{-1}\|\cdot \|E\|_{\scriptscriptstyle\mathcal{J}_1},
\end{multline*}
that is
\[
|\det(I-(A+E)) - \det(I-A)| \leq \det(I-A)\cdot\|(I-A)^{-1}\|\cdot\|E\|_{\scriptscriptstyle\mathcal{J}_1}.
\]
Now, by the spectral theorem for bounded selfadjoint operators we have
\[
\|(I-A)^{-1}\| = \frac{1}{1-\lambda_1(A)} \leq \prod_{n=1}^{N(A)} \frac{1}{1-\lambda_n(A)} = \frac{1}{\det(I-A)}
\]
and therefore $\det(I-A)\cdot\|(I-A)^{-1}\| \leq 1$, which finally proves the assertion.
\end{proof}

\medskip

Thus, for the operators that satisfy the assumptions of this lemma the determinant is a really \emph{well} conditioned quantity---with respect to absolute errors,
like the eigenvalues
of a Hermitian matrix \cite[p.~396]{MR1417720}.

\medskip
\paragraph{Implications on the accuracy of numerical methods}

The Nyström-type method of Section~\ref{sect:quad} requires the calculation of the determinant $\det(I+ A)$ of some matrix $A \in \C^{m\times m}$.
In the presence of roundoff errors, a backward stable method like Gaussian elimination with partial pivoting \cite[Sect.~14.6]{MR1927606} gives
a result that is \emph{exact} for some matrix $\tilde{A} = A + E$ with
\begin{equation}\label{eq:roundoff}
|E_{j,k}| \leq \epsilon |A_{j,k}|\qquad (j,k=1,\ldots,m)
\end{equation}
where $\epsilon$ is a small multiple of the unit roundoff error of the floating-point arithmetic used. We now use the perturbation bounds of this section to
estimate the resulting error in the value of determinant. Since the trace class norm is not a monotone matrix norm \cite[Def.~6.1]{MR1927606}, we cannot make direct use of the
componentwise estimate~(\ref{eq:roundoff}). Instead, we majorize the trace class norm of $m\times m$ matrices $A$ by the Hilbert--Schmidt (Frobenius) norm, which is monotone, using
\[
\|A\|_{\scriptscriptstyle\mathcal{J}_1} \leq \sqrt{m} \|A\|_{\scriptscriptstyle\mathcal{J}_2},\qquad \|A\|_{\scriptscriptstyle\mathcal{J}_2} = \left(\sum_{j,k=1}^m |A_{j,k}|^2\right)^{1/2}.
\]
Thus, the general perturbation bound~(\ref{eq:perturb1}) yields the following a priori estimate of the roundoff error affecting the value of the determinant:
\[
|\det(I+\tilde{A}) - \det(I+A)| \leq \sqrt{m}\|A\|_{\scriptscriptstyle\mathcal{J}_2} \exp\left(1+\|A\|_{\scriptscriptstyle\mathcal{J}_1}\right) \cdot \epsilon + O(\epsilon^2).
\]
If the matrix $A$ satisfies the assumptions of Lemma~\ref{lem:perturb}, the perturbation bound~(\ref{eq:perturb2}) gives the even sharper estimate
\begin{equation}\label{eq:roundoff2}
|\det(I-\tilde{A}) - \det(I-A)| \leq \sqrt{m}\|A\|_{\scriptscriptstyle\mathcal{J}_2}\cdot \epsilon.
\end{equation}
Therefore, if $\det(I-A) \ll \|A\|_{\scriptscriptstyle\mathcal{J}_2}$, we have to be prepared that we probably cannot compute the determinant to the full precision given by the computer arithmetic used.
Some digits will
be lost. A conservative estimate stemming from (\ref{eq:roundoff2}) predicts the loss of at most
\[
\log_{10}\left( \frac{\sqrt{m}\cdot \|A\|_{\scriptscriptstyle\mathcal{J}_2}}{\det(I-A)} \right)
\]
decimal places. For instance, this will affect the \emph{tails} of the probability distributions to be calculated in Section~\ref{sect:random}.

\section{Projection Methods}\label{sect:proj}

The general idea (\ref{eq:findef}) of defining the infinite dimensional determinant $\det(I+A)$ for a trace class operator $A$ by a continuous extension from the finite dimensional case
immediately leads to the concept of a projection method of Galerkin type. We consider a sequence of $m$-dimensional
subspaces $V_m \subset \mathcal{H}$ together with their corresponding orthonormal projections
\[
P_m : \mathcal{H} \to V_m.
\]
The Galerkin projection $P_m AP_m$ of the trace class operator $A$ is of finite rank. Given an orthonormal basis $\phi_1,\ldots,\phi_m$ of $V_m$, its determinant can be effectively
calculated
as the finite dimensional expression
\begin{equation}\label{eq:galerkin}
\det(I+z\,P_mAP_m) = \det\left(I + z\,P_mAP_m\projected{V_m}\right) = \det\left(\delta_{ij} + z\,\langle \phi_i, A \phi_j \rangle\right)_{i,j=1}^m
\end{equation}
if the matrix elements $\langle \phi_i, A \phi_j \rangle$ are numerically accessible.

Because of $\|P_m\|\leq 1$, and
thus $\|P_m AP_m \|_{\scriptscriptstyle\mathcal{J}_1} \leq \| A\|_{\scriptscriptstyle\mathcal{J}_1} \leq 1$,
the perturbation bound~(\ref{eq:perturb0}) gives the simple error estimate
\begin{equation}\label{eq:galerkinerr}
|\det(I+z\,P_mAP_m) - \det(I+z \,A)| \leq \| P_mAP_m - A\|_{\scriptscriptstyle\mathcal{J}_1} \cdot |z|\, e^{1+ |z|\cdot\| A\|_{\scriptscriptstyle\mathcal{J}_1}}.
\end{equation}
For the method to be convergent we therefore have to show that $P_m AP_m \to A$ in {\em trace class norm}. By a general result about the approximation of trace class operators  \cite[Thm.~4.3]{MR1130394}
all we need to know is that $P_m$ converges \emph{pointwise}\footnote{In infinite dimensions $P_m$ cannot converge {\em in norm} since the identity operator is not compact.} to the identity operator $I$. This pointwise
convergence is obviously equivalent to the \emph{consistency} of the family of subspaces $V_m$, that is,
\begin{equation}\label{eq:consist}
\bigcup_{m=1}^\infty V_m \text{ is a dense subspace of $\mathcal{H}$}.
\end{equation}
In summary, we have proven the following theorem.

\smallskip

\begin{theorem}\label{thm:galerkin} Let $A$ be a trace class operator. If the sequence of subspaces satisfies the consistency condition (\ref{eq:consist}), the corresponding Galerkin approximation (\ref{eq:galerkin})
of the operator determinant converges,
\[
\det(I+z\,P_mAP_m) \to \det(I+z\,A)  \qquad (m\to\infty),
\]
uniformly for bounded $z$.
\end{theorem}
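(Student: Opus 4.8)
The plan is to reduce the whole statement to the single quantitative fact $\|P_m A P_m - A\|_{\scriptscriptstyle\mathcal{J}_1}\to 0$ and then let the perturbation bound~(\ref{eq:perturb0}) do everything else. First I would note that an orthogonal projection satisfies $\|P_m\|\le 1$, so the ideal inequalities~(\ref{eq:ideal}) give $\|z\,P_m A P_m\|_{\scriptscriptstyle\mathcal{J}_1}\le |z|\,\|A\|_{\scriptscriptstyle\mathcal{J}_1}$; applying~(\ref{eq:perturb0}) to the pair $zA$ and $z\,P_m A P_m$ then yields exactly the error estimate~(\ref{eq:galerkinerr}). Since on any bounded set $|z|\le R$ the prefactor $|z|\,e^{1+|z|\,\|A\|_{\scriptscriptstyle\mathcal{J}_1}}$ stays bounded, the uniform convergence claimed in the theorem follows the instant $\|P_m A P_m - A\|_{\scriptscriptstyle\mathcal{J}_1}\to 0$ is in hand.

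The next step is to extract from the consistency condition~(\ref{eq:consist}) the strong (pointwise) convergence $P_m\to I$. As remarked in the text this is the expected ingredient: for an increasing family $V_m$ one has $P_m x = x$ for all large $m$ whenever $x\in\bigcup_m V_m$, and since $\|P_m\|\le 1$ a routine $\epsilon$-argument upgrades this to $P_m x\to x$ for every $x\in\mathcal{H}$. (Norm convergence $P_m\to I$ is out of the question---the identity is not compact---so pointwise convergence is simultaneously the best one can hope for and precisely what is needed.)

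The substantive step, and the one I expect to be the main obstacle, is upgrading strong convergence of $P_m$ to \emph{trace-norm} convergence $P_m A P_m\to A$: this is genuinely special to trace class (indeed compact) operators and is false for general bounded $A$, so bounding the trace norm crudely by the operator norm is useless and one must exploit the summability of the singular values. I would do this by truncating the singular-value expansion~(\ref{eq:schmidt}): put $A_N=\sum_{n=1}^N s_n(A)\,\langle u_n,\cdot\,\rangle v_n$, so that $\|A-A_N\|_{\scriptscriptstyle\mathcal{J}_1}=\sum_{n>N}s_n(A)\to 0$ as $N\to\infty$ because $A$ is trace class. From the splitting
\[
P_m A P_m - A = (P_m A_N P_m - A_N) + P_m(A-A_N)P_m - (A-A_N)
\]
together with the ideal bound $\|P_m(A-A_N)P_m\|_{\scriptscriptstyle\mathcal{J}_1}\le \|A-A_N\|_{\scriptscriptstyle\mathcal{J}_1}$ one gets $\|P_m A P_m - A\|_{\scriptscriptstyle\mathcal{J}_1}\le \|P_m A_N P_m - A_N\|_{\scriptscriptstyle\mathcal{J}_1}+2\,\|A-A_N\|_{\scriptscriptstyle\mathcal{J}_1}$. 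For fixed $N$ the first term is a finite sum of rank-one operators whose trace norms are controlled by $\|P_m u_n-u_n\|$ and $\|P_m v_n-v_n\|$ (using $\|\langle f,\cdot\,\rangle g\|_{\scriptscriptstyle\mathcal{J}_1}=\|f\|\,\|g\|$), hence it tends to $0$ as $m\to\infty$ by the strong convergence just established; choosing first $N$ and then $m$ completes the proof. Alternatively one may shortcut this last paragraph entirely by invoking the general approximation theorem for trace class operators, \citeasnoun[Thm.~4.3]{MR1130394}, which packages exactly this argument.
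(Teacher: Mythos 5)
Your proposal is correct and follows essentially the same route as the paper: the perturbation bound (\ref{eq:perturb0}) with $\|P_m\|\leq 1$ gives (\ref{eq:galerkinerr}), and the reduction to $\|P_mAP_m-A\|_{\scriptscriptstyle\mathcal{J}_1}\to 0$ is settled either by the cited approximation theorem for trace class operators (the paper's primary argument) or by the singular-value truncation estimate, which is precisely the paper's bound (\ref{eq:projest}) given immediately after the theorem. No substantive differences to report.
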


\smallskip
A quantitative estimate of the error, that is, in view of (\ref{eq:galerkinerr}), of the projection error $\| P_mAP_m - A\|_{\scriptscriptstyle\mathcal{J}_1}$ in trace class norm, can be based on the
singular value representation~(\ref{eq:schmidt}) of $A$ and its finite-rank truncation $A_N$: (We assume that $A$ is non-degenerate, that is, $N(|A|)=\infty$; since otherwise we could simplify the following by putting $A_N=A$.)
\[
A = \sum_{n=1}^{\infty} s_n(A) \langle u_n,\cdot\,\rangle v_n, \qquad A_N = \sum_{n=1}^{N} s_n(A) \langle u_n,\cdot\,\rangle v_n.
\]
We obtain, by using $\|P_m\|\leq 1$ once more,
\begin{align}\label{eq:projest}
&\| P_mAP_m - A\|_{\scriptscriptstyle\mathcal{J}_1} \leq \| P_mAP_m - P_mA_NP_m\|_{\scriptscriptstyle\mathcal{J}_1} + \| P_mA_NP_m - A_N\|_{\scriptscriptstyle\mathcal{J}_1} + \| A_N- A\|_{\scriptscriptstyle\mathcal{J}_1}\notag\\*[3mm]
&\qquad \qquad\leq 2\| A_N- A\|_{\scriptscriptstyle\mathcal{J}_1} +  \| P_mA_NP_m - A_N\|_{\scriptscriptstyle\mathcal{J}_1}\notag\\*[1mm]
&\qquad \qquad\leq 2 \sum_{n=N+1}^{\infty} s_n(A) \;+\; \sum_{n=1}^N s_n(A) \| \langle P_m u_n,\cdot\,\rangle P_m v_n - \langle u_n,\cdot\,\rangle v_n\|_{\scriptscriptstyle\mathcal{J}_1}\notag\\*[1mm]
&\qquad \qquad\leq 2 \sum_{n=N+1}^{\infty} s_n(A) \;+\; \sum_{n=1}^N s_n(A) \left(\|u_n - P_m u_n\| + \|v_n - P_m v_n\|\right).
\end{align}
There are two competing effects that contribute to making this error bound small: First, there is the convergence of the truncated series of singular values,
\[
 \sum_{n=N+1}^{\infty} s_n(A) \to 0 \qquad (N\to \infty),
\]
which is \emph{independent} of $m$. Second, there is, for \emph{fixed} $N$, the collective approximation
\[
P_m u_n \to u_n,\qquad P_m v_n \to v_n\qquad (m \to \infty)
\]
of the first $N$ singular functions $u_n, v_n$ $(n=1,\ldots,N)$. For instance, given $\epsilon > 0$, we can first choose $N$ large enough to push the first error term in (\ref{eq:projest})
below $\epsilon/2$. After fixing such an $N$, the second error term can be pushed below $\epsilon/2$ for $m$ large enough. This way we have proven Theorem~\ref{thm:galerkin} once more.
However, in general the convergence of the second term might considerably slow down  for growing~$N$. Therefore, a good quantitative bound requires a carefully balanced choice
of $N$ (depending on $m$), which in turn requires some detailed knowledge about the decay of the singular values on the one hand and of the growth of the derivatives of the singular functions on the other
hand (see the example at the end of this section). While some general results are available in the literature for the singular values---e.g., for integral operators $A$ induced by a kernel $K \in C^{k-1,1}([a,b]^2)$ the bound
\begin{equation}\label{eq:snAsmithies}
s_n(A) = O(n^{-k-\frac{1}{2}}) \qquad (n \to \infty)
\end{equation}
obtained by \citeasnoun[Thm.~12]{Smithies37}---the results are sparse for the singular functions \cite[§8.10]{Fenyo}.
Since the quadrature method in Section~\ref{sect:quad} does not require any such knowledge, we refrain from stating a general result and content ourselves with
the case that there is no projection error in the singular functions; that is, we consider projection methods of Ritz--Galerkin type for selfadjoint operators.

\smallskip

\begin{theorem}\label{thm:ritzerr} Let $A$ be a selfadjoint integral operator that is induced by a continuous Hermitian kernel $K$ and that is trace class on the Hilbert space $\mathcal{H} = L^2(a,b)$. Assume that $A$ is not of finite rank and
let $(u_n)$ be an orthonormal basis of eigenfunctions of $A$. We consider the associated Ritz projection $P_m$, that is, the orthonormal projection
\[
P_m : \mathcal{H} \to V_m = \spn\{u_1,\ldots,u_m\}.
\]
Note that in this case
\[
\det(I+z\,P_mAP_m) = \prod_{n=1}^m (1+z\lambda_n(A)).
\]
If $K \in C^{k-1,1}([a,b]^2)$, then there holds the error estimate (\ref{eq:galerkinerr}) with
\[
\| P_mAP_m - A\|_{\scriptscriptstyle\mathcal{J}_1} = o(m^{\frac{1}{2}-k})\qquad (m\to\infty).
\]
If $K$ is bounded analytic on $\mathcal{E}_\rho \times \mathcal{E}_\rho$ (with the ellipse $\mathcal{E}_\rho$ defined in Theorem~\ref{thm:quaderr}),
then the error estimate improves to
\[
\| P_mAP_m - A\|_{\scriptscriptstyle\mathcal{J}_1} = O(\rho^{-m(1-\epsilon)/4})\qquad (m\to\infty),
\]
for any fixed choice of $\epsilon>0$.
\end{theorem}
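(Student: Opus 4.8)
The plan is to use the error bound (\ref{eq:projest}) and to estimate its two terms separately, using the regularity hypotheses on $K$ to control, respectively, the decay of the eigenvalues $s_n(A)=|\lambda_n(A)|$ and the approximation error $\|u_n-P_mu_n\|$ for the eigenfunctions. Since we are dealing with a Ritz projection onto $V_m=\spn\{u_1,\ldots,u_m\}$, the crucial simplification is that $P_m u_n = u_n$ for $n\le m$: the eigenfunctions making up the basis are reproduced exactly. Thus taking $N=m$ in (\ref{eq:projest}) kills the second sum entirely, and we are left with
\[
\|P_mAP_m - A\|_{\scriptscriptstyle\mathcal{J}_1} \le 2\sum_{n=m+1}^\infty s_n(A).
\]
So the whole theorem reduces to estimating the tail of the singular-value series under the two smoothness assumptions.

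For the finitely-smooth case $K\in C^{k-1,1}([a,b]^2)$, I would invoke the Smithies bound (\ref{eq:snAsmithies}), $s_n(A)=O(n^{-k-1/2})$. Then $\sum_{n=m+1}^\infty s_n(A) = O\!\big(\sum_{n>m} n^{-k-1/2}\big)$; for $k\ge 1$ the exponent $k+1/2>1$ so the series converges, and comparison with $\int_m^\infty t^{-k-1/2}\,dt = O(m^{1/2-k})$ gives the rate. To upgrade the $O(\cdot)$ to the claimed $o(m^{1/2-k})$ one notes that $s_n(A)=o(n^{-k-1/2})$ is in fact what Smithies' theorem yields (or, more simply, that a convergent tail of a series with summand $O(n^{-k-1/2})$ is automatically $o$ of the leading term of its integral comparison when one is slightly careful), so $\sum_{n>m}s_n(A)=o(m^{1/2-k})$. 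Combined with (\ref{eq:galerkinerr}) this is the first assertion.

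For the analytic case, the mechanism is the same but now I need exponential decay of the eigenvalues. The standard tool is to combine the min-max characterization of $s_n(A)$ with a polynomial (tensor-product) approximation of the kernel: if $K$ is bounded analytic on $\mathcal{E}_\rho\times\mathcal{E}_\rho$, then there is a degree-$(d,d)$ polynomial kernel $K_d$ with $\|K-K_d\|_\infty = O(\rho^{-d})$ on $[a,b]^2$ (classical Bernstein-type estimates for Chebyshev approximation on an interval, applied in each variable). The induced operator $A_d$ has rank at most $(d+1)^2$, and $\|A-A_d\|_{\scriptscriptstyle\mathcal{J}_2}\le\|K-K_d\|_{L^2}=O(\rho^{-d})$. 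By the min-max principle $s_n(A)\le \|A-A_d\|$ for $n>(d+1)^2$, hence $s_n(A)=O(\rho^{-d})$ for $n> (d+1)^2$; choosing $d\sim \sqrt{n}$ gives $s_n(A)=O(\rho^{-c\sqrt n})$ for a constant $c$ that can be taken arbitrarily close to $1$. Summing the tail, $\sum_{n>m}s_n(A)=O(\rho^{-c\sqrt m})$. To match the stated form $O(\rho^{-m(1-\epsilon)/4})$ one would instead exploit the selfadjointness and use the sharper fact that for a Hermitian analytic kernel the eigenvalues decay like $O(\rho^{-n/2})$ rather than $O(\rho^{-\sqrt n})$ — but since the theorem only claims the weaker rate $\rho^{-m(1-\epsilon)/4}$, the square-root-type bound with the exponent renamed, or an $n^{1/2}$ argument tuned with the free $\epsilon$, suffices; I would pick whichever constant bookkeeping is cleanest and absorb the loss into $\epsilon$.

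The main obstacle is the analytic case: getting a clean, citable exponential decay rate for the singular values of an integral operator with analytic kernel, with the constant in the exponent made explicit enough to land the stated $\rho^{-m(1-\epsilon)/4}$. The finitely-smooth half is essentially immediate once Smithies' theorem is quoted. I expect the write-up to hinge on the polynomial-approximation/min-max argument above, with the $\epsilon$ serving exactly to hide the gap between the naive tensor-product degree count $(d+1)^2$ and the ideal linear count, i.e. to convert $\rho^{-c\sqrt m}$-type decay into the advertised $\rho^{-m(1-\epsilon)/4}$ form on the indices that actually matter.
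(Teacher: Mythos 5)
Your overall reduction coincides with the paper's: for the Ritz projection one takes $N=m$ in (\ref{eq:projest}), the eigenfunction terms vanish, and only the eigenvalue tail survives (the paper even gets the identity $\|P_mAP_m-A\|_{\mathcal{J}_1}=\sum_{n=m+1}^\infty|\lambda_n(A)|$ directly from the spectral decomposition, so your factor $2$ is harmless). The gaps are in the two decay estimates you then feed into this tail. In the $C^{k-1,1}$ case the theorem asserts $o(m^{1/2-k})$, and neither of your routes delivers the little-$o$: Smithies' result, exactly as recorded in (\ref{eq:snAsmithies}), is an $O(n^{-k-1/2})$ bound, and an $O$-bound on the summand cannot be upgraded "automatically" to a $o$-bound on the tail --- for $s_n=n^{-k-1/2}$ the tail is $\asymp m^{1/2-k}$, not $o(m^{1/2-k})$. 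What is needed (and what the paper cites) is the genuinely sharper Hille--Tamarkin eigenvalue estimate $\lambda_n(A)=o(n^{-k-1/2})$ for continuous Hermitian kernels, from which the tail is $o(m^{1/2-k})$ by a straightforward summation argument.

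In the analytic case your tensor-product argument falls short by an essential amount, and your closing remark has the comparison backwards: $\rho^{-m(1-\epsilon)/4}$ decays \emph{much faster} than $\rho^{-c\sqrt m}$, so no renaming of constants or tuning of $\epsilon$ turns your bound into the asserted one. The loss is caused by approximating $K$ in both variables: a degree-$(d,d)$ polynomial kernel has rank of order $d^2$, which is precisely what puts $\sqrt n$ instead of $n$ into the exponent. The standard cure is to approximate in one variable only, e.g.\ truncating the Chebyshev expansion of $x\mapsto K(x,y)$ at degree $d$ with $y$-dependent coefficients; this degenerate kernel has rank $d+1$ (linear in $d$) while the uniform error is still $O(\rho^{-d})$, so the min--max principle yields geometric decay $s_n(A)=O(\rho^{-cn})$, which after summation gives a bound of the claimed exponential-in-$m$ type. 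The paper bypasses this construction by simply quoting Hille--Tamarkin's bound $\lambda_n(A)=O(\rho^{-n(1-\epsilon)/4})$ for kernels bounded analytic on $\mathcal{E}_\rho\times\mathcal{E}_\rho$; either way, a genuinely linear-in-$n$ exponent is the missing ingredient in your write-up.
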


\smallskip

\begin{proof} With the spectral decompositions
\[
A = \sum_{n=1}^{\infty} \lambda_n(A) \langle u_n,\cdot\,\rangle u_n, \qquad P_m A P_m = A_m = \sum_{n=1}^{m} \lambda_n(A) \langle u_n,\cdot\,\rangle u_n,
\]
at hand the bound (\ref{eq:projest}) simplifies, for $N=m$, to
\[
\| P_mAP_m - A\|_{\scriptscriptstyle\mathcal{J}_1} = \sum_{n=m+1}^\infty |\lambda_n(A)|.
\]
Now, by some results of \citeasnoun[Thm.~7.2 and 10.1]{Hille31}, we have, for $K \in C^{k-1,1}([a,b]^2)$, the eigenvalue decay
\begin{equation}\label{eq:hille}
\lambda_n(A) = o(n^{-k-\frac{1}{2}})\qquad (n\to \infty)
\end{equation}
(which is just slightly stronger than Smithies' singular value bound (\ref{eq:snAsmithies})) and, for $K$ bounded analytic on $\mathcal{E}_\rho \times \mathcal{E}_\rho$,
\[
\lambda_n(A) = O(\rho^{-n(1-\epsilon)/4}) \qquad (n\to \infty);
\]
which proves both assertions.
\end{proof}

\smallskip

However, for kernels of low regularity, by taking into account the specifics of a particular example one often gets better results than stated in this theorem. (An example with an analytic
kernel, enjoying the excellent convergence rates of the second part this theorem, can be found in Section~\ref{sect:random}.)

\smallskip

\paragraph{An example: Poisson's equation} For a given $f \in L^2(0,1)$ the Poisson equation
\[
-u''(x) = f(x),\qquad u(0)=u(1)=0,
\]
with Dirichlet boundary conditions is solved \cite[p.~5]{MR0390680} by the application of the integral operator $A$,
\begin{equation}\label{eq:greenA}
u(x) = A f(x) = \int_0^1 K(x,y) f(y)\,dy,
\end{equation}
which is induced by the Green's  kernel
\begin{equation}\label{eq:greenK}
K(x,y) = \begin{cases}
x(1-y) & x \leq y, \\*[1mm]
y(1-x) & \text{otherwise}.
\end{cases}
\end{equation}
Since $K$ is Lipschitz continuous, Hermitian, and positive definite, we know from the results of Section~\ref{sect:trace} that $A$ is a selfadjoint trace class operator on $\mathcal{H}=L^2(0,1)$.
The eigenvalues and
normalized eigenfunctions of $A$ are those of the Poisson equation which are known to be
\[
\lambda_n(A) = \frac{1}{\pi^2 n^2},\qquad u_n(x) = \sqrt{2}\sin(n \pi x)\qquad (n=1,2,\ldots).
\]
Note that the actual decay of the eigenvalues is better than the general Hille--Ta\-mar\-kin bound (\ref{eq:hille}) which would, because
of $K \in C^{0,1}([0,1]^2)$, just give $\lambda_n(A)=o(n^{-3/2})$.
The trace formulas (\ref{eq:tracedef}) and~(\ref{eq:tracediag}) reproduce a classical formula of Euler's, namely
\[
\sum_{n=1}^\infty \frac{1}{\pi^2 n^2} = \tr(A) = \int_0^1 K(x,x)\,dx = \int_0^1 x(1-x)\,dx = \frac{1}{6};
\]
whereas, by (\ref{eq:detlidskii}) and the product representation of the sine function, the Fredholm determinant explicitly evaluates to the entire function
\begin{equation}\label{eq:greendet}
\det(I-z A) = \prod_{n=1}^\infty \left(1-\frac{z}{\pi^2 n^2}\right) = \frac{\sin(\sqrt z)}{\sqrt z}.
\end{equation}
The sharper perturbation bound of Lemma~\ref{lem:perturb} applies and we get, for each finite dimensional subspace $V_m \subset \mathcal{H}$ and the corresponding orthonormal projection
$P_m: \mathcal{H} \to V_m$,
the error estimate
\begin{equation}\label{eq:greenerr}
|\det(I-P_m A P_m) - \det(I-A)| \leq \| P_mAP_m - A\|_{\scriptscriptstyle\mathcal{J}_1}.
\end{equation}
Now, we study two particular families of subspaces.

\smallskip
\paragraph{Trigonometric polynomials}
Here, we consider the subspaces
\[
V_m = \spn \{ \sin(n\pi\, \cdot) : n=1,\ldots,m\} = \spn\{ u_n : n=1,\ldots, m\},
\]
which are exactly those spanned by the eigenfunctions of $A$. In this case, the projection method is of Ritz--Galerkin type;
the estimates become particularly simple since we have the explicit spectral decomposition
\[
P_m A P_m - A = \sum_{n=m+1}^\infty \lambda_n(A) \langle u_n,\cdot\,\rangle u_n
\]
of the error operator.
Hence, the error bound (\ref{eq:greenerr}) directly evaluates to
\begin{multline}\label{eq:ritzbound}
|\det(I-P_m A P_m) - \det(I-A)| \\*[2mm]
\leq \| P_mAP_m - A\|_{\scriptscriptstyle\mathcal{J}_1} = \sum_{n=m+1}^\infty \lambda_n(A) = \frac{1}{\pi^2} \sum_{n=m+1}^\infty \frac{1}{n^2} \leq \frac{1}{\pi^2 m}.
\end{multline}
Figure~\ref{fig:ritz} shows that this upper bound overestimates the error in the Fredholm determinant by just about 20\%.

\begin{figure}[tbp]
\begin{center}
\begin{minipage}{0.75\textwidth}
{\includegraphics[width=\textwidth]{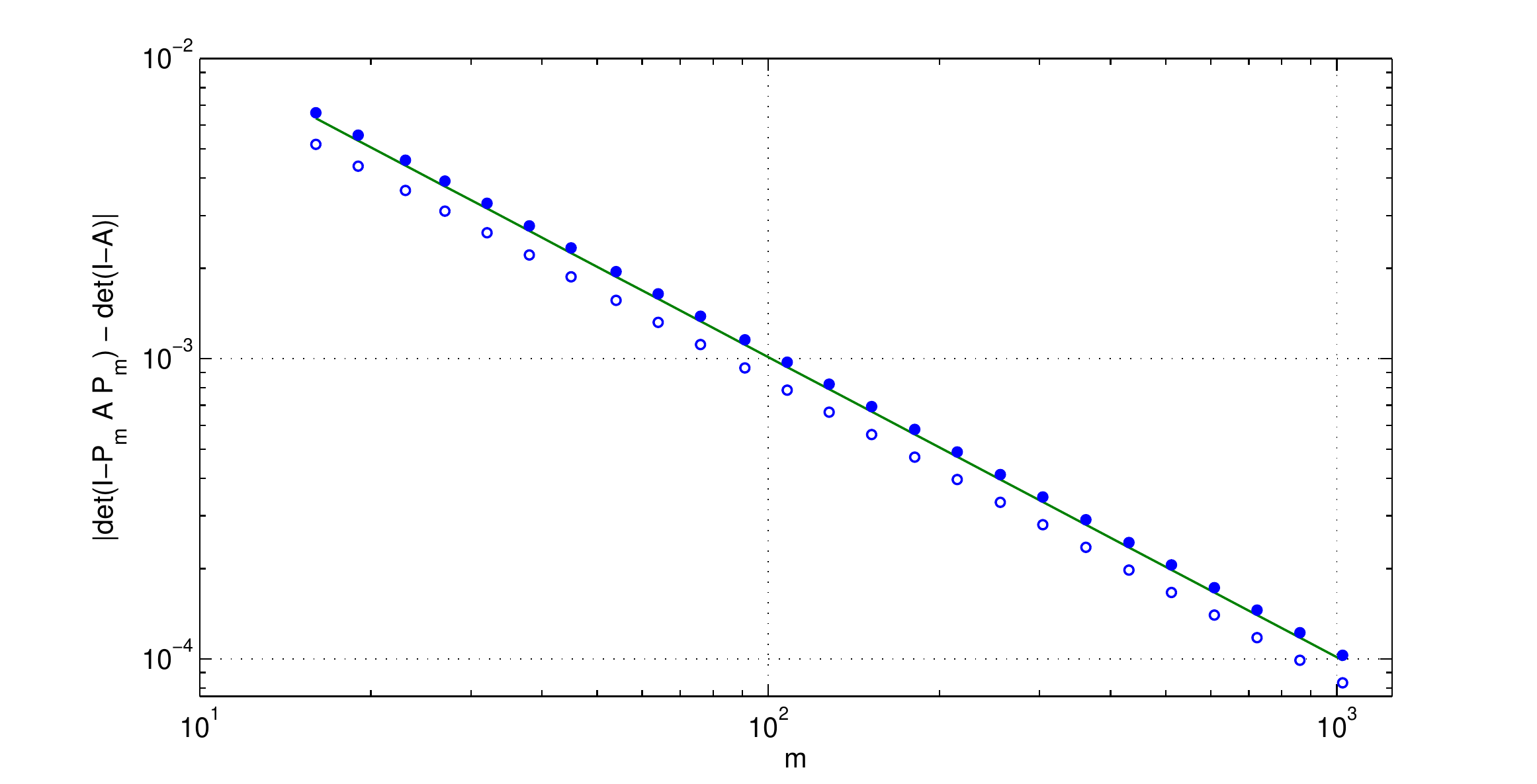}}
\end{minipage}
\end{center}\vspace*{-0.0625cm}
\caption{Convergence of Ritz--Galerkin (circles) and Galerkin (dots) for approximating the Fredholm determinant of the integral operator induced by Green's kernel of Poisson's equation.
The solid line shows the upper bound $1/\pi^2 m$ of Ritz--Galerkin as given in (\ref{eq:ritzbound}).}
\label{fig:ritz}
\end{figure}

\smallskip
\paragraph{Algebraic polynomials} Here, we consider the subspaces of algebraic polynomials of order $m$, that is,
\[
V_m = \{u \in L^2(0,1) : \text{$u$ is a polynomial of degree at most $m-1$}\}.
\]
We apply the bound given in (\ref{eq:projest}) and obtain (keeping in mind that $A$ is selfadjoint)
\[
\| P_mAP_m - A\|_{\scriptscriptstyle\mathcal{J}_1} \leq 2 \sum_{n=N+1}^{\infty} \lambda_n(A) \;+\; 2\sum_{n=1}^N \lambda_n(A)\,\|u_n - P_m u_n\|
\]
with a truncation index $N$ yet to be skilfully chosen.
As before in (\ref{eq:ritzbound}), the first term of this error bound can be estimated by $2/\pi^2N$.
For the second term we recall that the projection error $\|u_n - P_m u_n\|$ is, in fact, just the error of polynomial best approximation of the eigenfunction
$u_n$ with respect to the $L^2$-norm.
A standard Jackson-type inequality \cite[p.~219]{MR1261635} from approximation theory teaches us that
\[
\|u_n - P_m u_n\| \leq \frac{c_k}{m^k}\|u^{(k)}_n\| =  c_k \frac{(\pi n)^k}{m^k},
\]
where $c_k$ denotes a constant that depends on the smoothness level $k$. A \emph{fixed} eigenfunction $u_n$ (being an entire function in fact)
is therefore approximated beyond every algebraic order in the dimension $m$; but with increasingly larger constants for higher ``wave numbers'' $n$.
We thus get, with some further constant $\tilde{c}_k$ depending on $k \geq 2$,
\[
\| P_m A P_m - A\|_{\scriptscriptstyle\mathcal{J}_1} \leq \tilde{c}_k \left( \frac{1}{N} + \frac{N^{k-1}}{(k-1)m^k} \right).
\]
We now balance the two error terms by minimizing this bound: the optimal truncation index $N$ turns out to be exactly $N=m$, which finally yields the estimate
\[
|\det(I-P_m A P_m) - \det(I-A)| \leq \| P_m A P_m - A\|_{\scriptscriptstyle\mathcal{J}_1} \leq \frac{\tilde{c}_k}{1-k^{-1}}\, m^{-1}.
\]
Thus, in contrast to the approximation of a {\em single} eigenfunction, for the Fredholm determinant the \emph{order} of the convergence estimate does finally not depend on
the choice of $k$ anymore; we obtain the same $O(m^{-1})$ behavior as for the Ritz--Galerkin method. In fact, a concrete numerical calculation\footnote{By (\ref{eq:galerkin})
all we need to know for implementing the Galerkin method are the matrix elements $\langle \phi_i,A\phi_j\rangle$ for the normalized orthogonal polynomials $\phi_n$
(that is, properly rescaled Legendre polynomials) on the interval~$[0,1]$.
A~somewhat lengthy but straightforward calculation shows that these elements are given by
\[
(\langle \phi_i,A \phi_j\rangle)_{i,j} = \begin{pmatrix}
\frac{1}{12} & 0 & b_0 \\*[2mm]
0 & \frac{1}{60} & 0 & b_1  \\
b_0 & 0 & a_1 & \ddots & \ddots \\
& b_1 & \ddots & a_2  \\
& & \ddots & & \ddots
\end{pmatrix}
\]
with the coefficients
\[
a_n = \frac{1}{2(2n+1)(2n+5)},\qquad b_n = -\frac{1}{4(2n+3)\sqrt{(2n+1)(2n+5)}}.
\]}
shows that this error estimate really reflects the actual order of the error decay of the Galerkin method, see Figure~\ref{fig:ritz}.

\smallskip

\paragraph{Remark} The analysis of this section has shown that the error decay of the projection methods is essentially determined by
the decay
\[
\sum_{k=m+1}^\infty s_k(A) \to 0
\]
of the singular values of $A$, which in turn is related to the smoothness of the kernel $K$ of the integral operator $A$.
In the next section, the error analysis of Nyström-type quadrature
methods will relate in a much more direct fashion to the smoothness of the kernel $K$, giving even much better error bounds, a priori and in actual numerical
computations. For instance, the Green's kernel (\ref{eq:greenA}) of low regularity can be treated by an $m$-dimensional approximation of the determinant
with an actual convergence rate of $O(m^{-2})$ instead of $O(m^{-1})$ as for the projection methods. Moreover, these methods are much simpler and straightforwardly implemented.

\section{Quadrature Methods}\label{sect:quad}

In this section we directly approximate the Fredholm determinant (\ref{eq:det1}) using the Nyström-type method (\ref{eq:ny}) that we have motivated at length in Section~\ref{sect:intro}.
We assume throughout that the kernel $K$ is a continuous function on $[a,b]^2$. The notation simplifies considerably by introducing the $n$-dimensional functions $K_n$ defined by
\[
K_n(t_1,\ldots,t_n) = \det\left(K(t_p,t_q)\right)_{p,q=1}^n.
\]
Their properties are given in Lemma~\ref{lem:Kn} of the appendix. We then write the Fredholm determinant shortly as
\[
d(z) = 1 + \sum_{n=1}^\infty \frac{z^n}{n!} \int_{[a,b]^n} K_n(t_1,\ldots,t_n)\,dt_1\cdots\,dt_n.
\]
For a given quadrature formula
\[
Q(f) = \sum_{j=1}^m w_j f(x_j)\,\approx\,\int_a^b f(x)\,dx
\]
we define the associated Nyström-type approximation of $d(z)$ by the expression
\begin{equation}\label{eq:detny}
d_Q(z) = \det \left(\delta_{ij} + z\,w_i K(x_i,x_j)\right)_{i,j=1}^m.
\end{equation}
The key to error estimates and a convergence proof is the observation that we can rewrite $d_Q(z)$ in a form that
closely resembles the Fredholm determinant. Namely, by using the von Koch form~(\ref{eq:vonKoch}) of  matrix determinants,
the multi-linearity of minors, and by introducing the $n$-dimensional product rule~(\ref{eq:productrule}) associated with $Q$ (see the appendix), we
get
\begin{align*}
d_Q(z) &= 1+\sum_{n=1}^\infty \frac{z^n}{n!} \sum_{j_1,\ldots,j_n=1}^m \det\left( w_{j_p} K(x_{j_p},x_{j_q}) \right)_{p,q=1}^n \\*[1mm]
&= 1+\sum_{n=1}^\infty \frac{z^n}{n!} \sum_{j_1,\ldots,j_n=1}^m w_{j_1}\cdots w_{j_n}\,\det\left( K(x_{j_p},x_{j_q}) \right)_{p,q=1}^n\\*[1mm]
&= 1+\sum_{n=1}^\infty \frac{z^n}{n!} \sum_{j_1,\ldots,j_n=1}^m w_{j_1}\cdots w_{j_n}\,K_n(x_{j_1},\ldots,x_{j_n})\\*[1mm]
&= 1+\sum_{n=1}^\infty \frac{z^n}{n!}\, Q^n(K_n).
\end{align*}
Thus, alternatively to the motivation given in the introductory Section~\ref{sect:intro}, we could have introduced the Nyström-type method
by approximating each of the multi-dimensional integrals in the  power series defining the Fredholm determinant
with a product quadrature rule. Using this form, we observe that the error is given by
\begin{equation}\label{eq:nyerr}
d_Q(z) - d(z) = \sum_{n=1}^\infty \frac{z^n}{n!} \left( Q^n(K_n) - \int_{[a,b]^n} K_n(t_1,\ldots,t_n)\,dt_1\cdots\,dt_n \right),
\end{equation}
that is, by the exponentially generating function of the quadrature errors for the functions $K_n$.
The following theorem generalizes a result that \citeasnoun[p.~59]{35.0378.02} had proven for a specific class of quadrature formulae, namely, the rectangular rule.

\smallskip

\begin{theorem}\label{thm:nyconv}
If a family $Q_m$ of quadrature rules converges for continuous functions,
then the corresponding Nyström-type approximation of the Fredholm determinant converges,
\[
d_{Q_m}(z) \to d(z)\qquad (m\to\infty),
\]
uniformly for bounded $z$.
\end{theorem}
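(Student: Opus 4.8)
The plan is to control the error series~(\ref{eq:nyerr}) term by term, using the fact that a convergent family of quadrature rules is uniformly bounded (a Banach--Steinhaus argument) together with the Hadamard bound for the determinantal integrands $K_n$. First I would invoke the Polya--Szeg\H{o} stability principle: since each $Q_m$ is convergent on $C[a,b]$, the operator norms $\|Q_m\| = \sum_{j=1}^{m}|w_j^{(m)}|$ are bounded by some constant $\Lambda < \infty$ uniformly in $m$; consequently the $n$-fold product rule satisfies $\|Q_m^n\| \leq \Lambda^n$, so that $|Q_m^n(K_n)| \leq \Lambda^n \|K_n\|_\infty$. The Hadamard inequality for the $n\times n$ determinant $K_n(t_1,\ldots,t_n) = \det(K(t_p,t_q))$ gives $\|K_n\|_\infty \leq n^{n/2} \|K\|_\infty^n$ (this is exactly the content I would cite from Lemma~\ref{lem:Kn} in the appendix). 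Combining these, the $n$-th term of the error series~(\ref{eq:nyerr}) is bounded by $\frac{|z|^n}{n!}\bigl(\Lambda^n + (b-a)^n\bigr) n^{n/2} \|K\|_\infty^n$, and since $n^{n/2}/n! \to 0$ super-exponentially (by Stirling, $n^{n/2}/n! \sim e^n/(n^{n/2}\sqrt{2\pi n})$), this majorant series converges locally uniformly in $z$. This provides the dominating function needed to justify interchanging the limit $m\to\infty$ with the infinite sum.

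Second, I would show that each individual term goes to zero: for fixed $n$, the integrand $K_n$ is a continuous function on $[a,b]^n$, so $Q_m^n(K_n) \to \int_{[a,b]^n} K_n$ as $m\to\infty$ because the product rule $Q_m^n$ converges on $C([a,b]^n)$ whenever $Q_m$ converges on $C[a,b]$ --- a standard fact about tensor-product quadrature (one applies the one-dimensional convergence $n$ times, peeling off one variable at a time, using uniform continuity to pass the limit inside the remaining integrals). Hence every summand in~(\ref{eq:nyerr}) tends to $0$.

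Third, I would assemble the two ingredients via the Weierstrass $M$-test / dominated convergence for series: the partial sums of~(\ref{eq:nyerr}) are dominated uniformly in $m$ (for $|z|$ in any bounded set) by the tail of the convergent majorant series from the first step, so given $\varepsilon>0$ one truncates the series after finitely many terms $N$, makes the tail smaller than $\varepsilon/2$ uniformly in $m$ and in bounded $z$, and then chooses $m$ large enough that each of the first $N$ terms is below $\varepsilon/(2N)$. This yields $|d_{Q_m}(z) - d(z)| < \varepsilon$ uniformly for bounded $z$, which is the claim.

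The main obstacle is the first step: one must be careful that the bound on $Q_m^n(K_n)$ uses only the uniform bound $\Lambda$ on the quadrature weights and the Hadamard growth $n^{n/2}$, and then verify that $\sum_n \frac{|z|^n}{n!}(2\max(\Lambda,b-a))^n n^{n/2}\|K\|_\infty^n$ really does converge for all $z$ --- the factorial in the denominator of the Fredholm series is what saves us against the $n^{n/2}$ from Hadamard, and this is precisely the classical observation (going back to Fredholm) that makes the determinant an entire function. Everything else is routine: the Banach--Steinhaus boundedness of convergent quadrature families and the tensor-product convergence of $Q_m^n$ are textbook facts, and the final $\varepsilon/2$ splitting is standard.
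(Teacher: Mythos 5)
Your proposal is correct and follows essentially the same route as the paper's own proof: split the error series (\ref{eq:nyerr}) at an index $N$, bound the tail uniformly in $m$ via the stability constant $\Lambda$ from Theorem~\ref{thm:polya} together with the Hadamard bound of Lemma~\ref{lem:Kn} (the paper invokes Lemma~\ref{lem:phi} where you use Stirling, which is equivalent), and then send the finitely many remaining terms to zero using the convergence of the product rules $Q_m^n$ (Theorem~\ref{thm:quaderrn}). No gaps.
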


\smallskip

\begin{proof} Let $z$ be bounded by $M$ and choose any $\epsilon>0$. We split the series (\ref{eq:nyerr}) at an index $N$ yet to be chosen, getting
\begin{multline*}
|d_{Q_m}(z) - d(z)| \leq \sum_{n=1}^N \frac{M^n}{n!} \left| Q^n_m(K_n) - \int_{[a,b]^n} K_n(t_1,\ldots,t_n)\,dt_1\cdots\,dt_n \right| \\*[2mm]
+ \sum_{n=N+1}^\infty \frac{M^n}{n!} \left| Q^n_m(K_n) - \int_{[a,b]^n} K_n(t_1,\ldots,t_n)\,dt_1\cdots\,dt_n \right|
\end{multline*}
Let $\Lambda$ be the stability bound of the convergent family $Q_m$ of quadrature rules (see Theorem~\ref{thm:polya} of the appendix) and put $\Lambda_1 = \max(\Lambda,b-a)$.
Then, by Lemma~\ref{lem:Kn}, the second part of the splitting can be bounded by
\begin{multline*}
\sum_{n=N+1}^\infty \frac{M^n}{n!} \left| Q^n_m(K_n) - \int_{[a,b]^n} K_n(t_1,\ldots,t_n)\,dt_1\cdots\,dt_n \right| \\*[2mm]
\leq \sum_{n=N+1}^\infty \frac{M^n}{n!} \left( |Q^n_m(K_n)| + |\int_{[a,b]^n} K_n(t_1,\ldots,t_n)\,dt_1\cdots\,dt_n | \right) \\*[2mm]
\leq \sum_{n=N+1}^\infty \frac{M^n}{n!} \left(\Lambda^n + (b-a)^n\right) \|K_n\|_{\scriptscriptstyle L^\infty}
\leq 2 \sum_{n=N+1}^\infty \frac{n^{n/2}}{n!} (M \Lambda_1 \|K\|_{\scriptscriptstyle L^\infty})^n.
\end{multline*}
The last series converges by Lemma~\ref{lem:phi} and the bound can, therefore, be pushed below $\epsilon/2$ by choosing $N$ large enough. After fixing such an $N$, we can certainly
also push the first part of the splitting, that is,
\[
\sum_{n=1}^N \frac{M^n}{n!} \left| Q^n_m(K_n) - \int_{[a,b]^n} K_n(t_1,\ldots,t_n)\,dt_1\cdots\,dt_n \right|\,,
\]
below $\epsilon/2$, now for $m$ large enough, say $m\geq m_0$, using the convergence of the product rules $Q_m^n$ induced by $Q_m$ (see Theorem~\ref{thm:quaderrn}). In summary we get
\[
|d_{Q_m}(z) - d(z)| \leq \epsilon
\]
for all $|z|\leq M$ and $m \geq m_0$, which proves the asserted convergence of the Nyström-type method.
\end{proof}

\smallskip

If the kernel $K$ enjoys, additionally, some smoothness, we can prove error estimates that exhibit, essentially, the same rates of convergence as for the quadrature of the
sections $x \mapsto K(x,y)$ and $y \mapsto K(x,y)$.

\smallskip

\begin{theorem}\label{thm:nyerr}
If $K \in C^{k-1,1}([a,b]^2)$, then for each quadrature rule $Q$ of order $\nu \geq k$ with positive weights there holds the error estimate
\[
|d_Q(z)-d(z)| \leq c_k\, 2^k(b-a)^k \nu^{-k} \,\Phi\!\left(|z|(b-a)\|K\|_k\right),
\]
where $c_k$ is the constant (depending only on $k$) from Theorem~\ref{thm:quaderr}, and $\|K\|_k$ and $\Phi$ are the norm and function defined in (\ref{eq:Knormk}) and (\ref{eq:phi}), respectively.

If $K$ is bounded analytic on $\mathcal{E}_\rho \times \mathcal{E}_\rho$ (with the ellipse $\mathcal{E}_\rho$ defined in Theorem~\ref{thm:quaderr}), then for each quadrature rule $Q$ of
order $\nu$ with positive weights there holds the error estimate
\[
|d_Q(z)-d(z)| \leq \frac{4\,\rho^{-\nu}}{1-\rho^{-1}} \,\Phi\!\left(|z|(b-a)\|K\|_{\scriptscriptstyle L^\infty(\mathcal{E}_\rho \times \mathcal{E}_\rho)}\right).
\]
\end{theorem}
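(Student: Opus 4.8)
The plan is to read the error directly off the series representation (\ref{eq:nyerr}): the difference $d_Q(z)-d(z)$ is the exponential generating function $\sum_{n\ge 1}\frac{z^n}{n!}E_n$ of the $n$-dimensional quadrature errors $E_n=Q^n(K_n)-I^n(K_n)$, where $I^n$ denotes the exact $n$-fold integral over $[a,b]^n$. So everything reduces to an estimate of the single number $|E_n|$ of the shape ``(a rate factor independent of $n$)\,$\times$\,(a sequence summable against $1/n!$)'', followed by recognising the resulting sum as $\Phi$. For the first step I would reduce the product-rule error to one dimension by the telescoping identity $Q^n-I^n=\sum_{\ell=1}^n I^{\ell-1}\otimes(Q-I)\otimes Q^{n-\ell}$ (this is exactly Theorem~\ref{thm:quaderrn} of the appendix): in the $\ell$-th term the operator $Q-I$ acts only on the section $t_\ell\mapsto K_n(t_1,\dots,t_n)$, so its contribution is controlled by the one-dimensional estimates of Theorem~\ref{thm:quaderr}, uniformly in the remaining variables, while the untouched factors contribute only stability constants. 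Here positivity of the weights is essential: a rule that is exact on constants then has $\sum_j|w_j|=\sum_j w_j=b-a$, so the $n-1$ untouched variables contribute at most $(b-a)^{n-1}$, and the telescoping sum has exactly $n$ terms.

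Next I would bound the sections $K_n$ using Lemma~\ref{lem:Kn}, which shows that $K_n$ inherits the regularity of $K$ in each variable separately, together with Hadamard-type control of its size. For $K\in C^{k-1,1}([a,b]^2)$ one gets $\|\partial_\ell^k K_n\|_{L^\infty}\le 2^k\, n^{n/2}\|K\|_k^n$, the factor $2^k$ arising because the $k$-th $t_\ell$-derivative of $\det(K(t_p,t_q))$, expanded by the Leibniz rule (each differentiation hits either row $\ell$, producing a $\partial_1$, or column $\ell$, producing a $\partial_2$), is a sum of $2^k$ determinants, each of which is bounded by $n^{n/2}\|K\|_k^n$ via Hadamard's inequality. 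For $K$ bounded analytic on $\mathcal{E}_\rho\times\mathcal{E}_\rho\supset[a,b]^2$ the same computation shows that $t_\ell\mapsto K_n$ is analytic on $\mathcal{E}_\rho$ with $\sup_{\mathcal{E}_\rho}|K_n|\le n^{n/2}\|K\|_{L^\infty(\mathcal{E}_\rho\times\mathcal{E}_\rho)}^n$ (Hadamard for complex matrices).

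Then I would assemble the pieces. In the $C^{k-1,1}$ case, inserting the section bound into Theorem~\ref{thm:quaderr} for a rule of order $\nu\ge k$ makes each of the $n$ telescoping terms of order $c_k(b-a)^{k+1}\nu^{-k}\cdot 2^k n^{n/2}\|K\|_k^n\cdot(b-a)^{n-1}$; multiplying by $|z|^n/n!$, summing over $n$, and collecting the powers of $(b-a)$ gives
\[
|d_Q(z)-d(z)|\le c_k\,2^k(b-a)^k\nu^{-k}\sum_{n=1}^\infty\frac{n\cdot n^{n/2}}{n!}\bigl(|z|(b-a)\|K\|_k\bigr)^n,
\]
and the series on the right is exactly $\Phi\bigl(|z|(b-a)\|K\|_k\bigr)$ as defined in (\ref{eq:phi}), which is entire by Lemma~\ref{lem:phi}. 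The analytic case is the same argument verbatim, with the polynomial-order one-dimensional error replaced by the geometric bound $\frac{4\rho^{-\nu}}{1-\rho^{-1}}\sup_{\mathcal{E}_\rho}|\cdot|$ from Theorem~\ref{thm:quaderr} and the second section bound above, which yields $\frac{4\rho^{-\nu}}{1-\rho^{-1}}\,\Phi\bigl(|z|(b-a)\|K\|_{L^\infty(\mathcal{E}_\rho\times\mathcal{E}_\rho)}\bigr)$.

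The hard part is not the body of the proof but the two appendix ingredients: the telescoping-plus-stability bookkeeping for product rules (Theorem~\ref{thm:quaderrn}) and, above all, Lemma~\ref{lem:Kn} --- the simultaneous control of the smoothness, resp.\ analyticity, of $K_n$ in each variable and of its growth. The Hadamard estimate $\|K_n\|\le n^{n/2}\|K\|^n$ is precisely what lets the factorials $1/n!$ in the Fredholm series win, and the Leibniz expansion of the derivative of a determinant is what produces the clean factor $2^k$. Once these are available, what remains is the routine substitution and summation sketched above; the only thing one must watch is that the powers of $(b-a)$ and the combinatorial factor $n$ coming from the $n$ telescoping terms line up with the definition of $\Phi$ in (\ref{eq:phi}).
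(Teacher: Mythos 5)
Your proposal is correct and follows essentially the same route as the paper: starting from the error representation (\ref{eq:nyerr}), applying the product-rule quadrature bounds of Theorem~\ref{thm:quaderrn} together with the determinantal bounds $|K_n|_k\leq 2^k n^{(n+2)/2}\|K\|_k^n$ and $\|K_n\|_{\mathcal{C}_\rho}\leq n^{(n+2)/2}\|K\|_{L^\infty(\mathcal{E}_\rho\times\mathcal{E}_\rho)}^n$ of Lemma~\ref{lem:Kn}, and then summing the series to recognize $\Phi$. The only difference is that you unfold the proofs of those two appendix ingredients (telescoping plus stability, Leibniz plus Hadamard) where the paper simply cites them, and your bookkeeping of the factors $n$, $2^k$, and $(b-a)$ matches the paper's constants exactly.
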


\begin{proof}
By Theorem~\ref{thm:quaderrn} and Lemma~\ref{lem:Kn} we can estimate the error (\ref{eq:nyerr}) in both cases in the form
\[
|d_Q(z)-d(z)| \leq \alpha \sum_{n=1}^\infty \frac{n^{(n+2)/2}}{n!} \,(|z|\beta)^n = \alpha\, \Phi(|z|\beta)\,;
\]
with the particular values $\alpha = c_k\, 2^k(b-a)^k \nu^{-k}$ and $\beta= (b-a)\, \|K\|_k$
in the first case, and $\alpha = 4\,\rho^{-\nu}/(1-\rho^{-1})$ and $\beta = (b-a)\,\|K\|_{\scriptscriptstyle L^\infty(\mathcal{E}_\rho \times \mathcal{E}_\rho)}$
in the second case. This proves both assertions.
\end{proof}

\smallskip

An example with an analytic
kernel, enjoying the excellent convergence rates of the second part this theorem, can be found in Section~\ref{sect:random}.

Note that Theorem~\ref{thm:nyerr} is based on a general result (Theorem~\ref{thm:quaderr}) about quadrature errors that stems from the convergence rates of polynomial best approximation. There are cases (typically of
low regularity), however, for which certain quadrature formulae enjoy convergence rates that are actually \emph{better} than best approximation. The Nyström-type
method inherits this behavior; one would just have to repeat the proof of Theorem~\ref{thm:nyerr} then.
We refrain from stating a general theorem, since this would involve bounds on the highest derivatives involving weights\footnote{For the interval $[-1,1]$ this weight
would be $(1-x^2)^{1/2}$, see \citeasnoun[§4.8.1]{MR760629}.} that take into account the boundary of the interval $[a,b]$. Instead, we content ourselves
with the detailed discussion of a particular example.

\smallskip

\paragraph{An example: Poisson's equation} We revisit the example of Section~\ref{sect:proj}, that is the integral operator (\ref{eq:greenA}) belonging to the Green's kernel $K$
(defined in (\ref{eq:greenK})) of Poisson's equation. Recall from (\ref{eq:greendet}) that
\[
d(-1) = \det(I-A) = \sin(1).
\]
The kernel $K$ is just Lipschitz continuous, that is, $K \in C^{0,1}([0,1]^2)$. If we apply the Nyström-type method with the $m$-point Gauss--Legendre (order $\nu=2m$)
or the Curtis--Clenshaw (order $\nu=m$) formulae as the underlying quadrature rule $Q_m$, Theorem~\ref{thm:nyerr} proves an error bound of the form
\[
d_{Q_m}(-1) - d(-1) = O(m^{-1}),
\]
which superficially indicates the same convergence rate as for the $m$-dimensional Galerkin methods of Section~\ref{sect:proj}.
However, the actual numerical computation shown in Figure~\ref{fig:ny1} exhibits the far better convergence rate of $O(m^{-2})$.
\begin{figure}[tbp]
\begin{center}
\begin{minipage}{0.75\textwidth}
{\includegraphics[width=\textwidth]{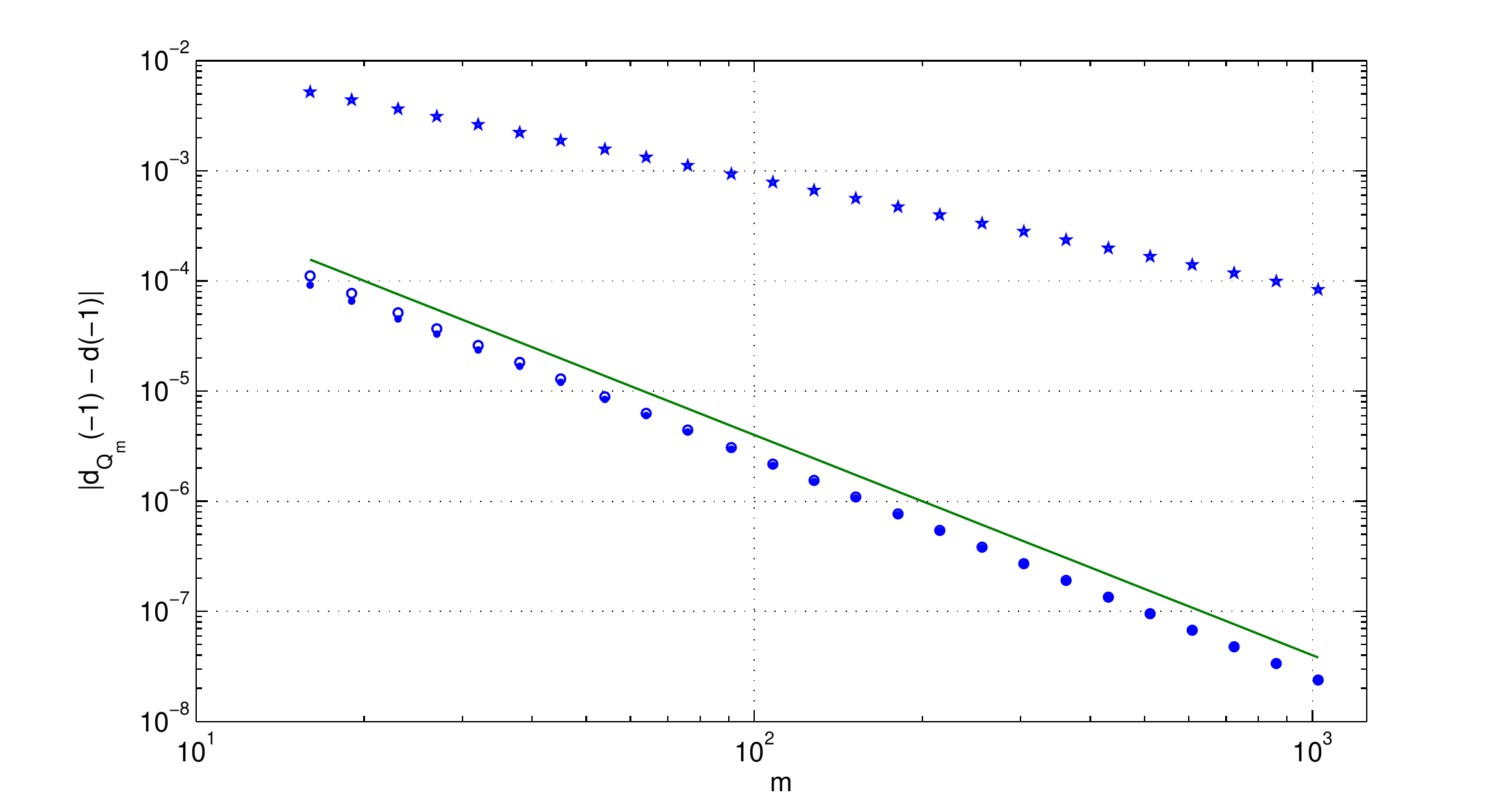}}
\end{minipage}
\end{center}\vspace*{-0.0625cm}
\caption{Convergence of the Nyström-type method for approximating the Fredholm determinant of the integral operator  induced by Green's kernel (\ref{eq:greenK}) of Poisson's equation; the
underlying quadrature rules $Q_m$ are the $m$-point Gauss--Legendre (dots) and Clenshaw--Curtis (circles) rules. Note that, in accordance with \protect\citeasnoun{Tref08}, both behave essentially the same.
The solid line shows the function $1/25 m^2$, just to indicate the rate of convergence.  For comparison we have included the results of the Ritz--Galerkin method (stars) from Figure~\ref{fig:ritz}.}
\label{fig:ny1}
\end{figure}
This deviation can be understood in detail as follows:

On the one hand, by inverse theorems of approximation theory \cite[p.~220]{MR1261635}, valid for \emph{proper} subintervals of $[a,b]$,
the polynomial best approximation (of degree $m$) of sections of the Green's kernel $K$ cannot give a better rate than $O(m^{-1})$; since otherwise those sections
could not show jumps in the first derivative. Given the line of arguments leading from polynomial best approximation to Theorem~\ref{thm:nyerr}, the error estimate of $O(m^{-1})$ was
therefore the
\emph{best} that could be established \emph{this} way.

On the other hand, the sections of the Green's kernel look like piecewise linear hat functions. Therefore, the coefficients $a_m$ of their Chebyshev expansions decay as $O(m^{-2})$
\cite[Eq.~(4.8.1.3)]{MR760629}. Given this decay rate, one can then prove---see, for Gauss--Legendre, \citeasnoun[Eq.~(4.8.1.7)]{MR760629} and, for Clenshaw--Curtis, \citeasnoun[Eq.~(9)]{MR0305555}---%
that the quadrature error is of rate $O(m^{-2})$, too. Now, one can lift this estimate to the Nyström-like method essentially as in Theorem~\ref{thm:nyerr};
thus \emph{proving} in fact that
\[
d_{Q_m}(-1) - d(-1) = O(m^{-2}),
\]
as numerically observed.

\smallskip

\paragraph{Remark} This ``superconvergence'' property of certain quadrature rules, as opposed to best approximation, for kernels with jumps in a higher derivative is therefore
also the deeper reason that the Nyström-type method then outperforms the projection methods of Section~\ref{sect:proj} (see Figure~\ref{fig:ny1}): Best approximation, by
direct (Jackson) and inverse (Bernstein) theorems of approximation
theory, is strongly tied with the regularity of $K$. And this, in turn, is tied to the decay of the singular values of the induced integral operator $A$, which governs the convergence
rates of projection methods.

\paragraph{A note on implementation} If the quadrature weights are positive (which in view of Theorem~\ref{thm:polya} is anyway the better choice), as is the case for Gauss--Legendre and
Clenshaw--Curtis, we recommend to implement the Nyström-type method (\ref{eq:detny}) in the equivalent, symmetric form
\begin{equation}\label{eq:detnysym}
d_Q(z) = \det(I+z A_Q), \qquad A_Q = \left(w_i^{1/2} K(x_i,x_j) w_j^{1/2}\right)_{i,j=1}^m.
\end{equation}
(Accordingly short Matlab and Mathematica code is given in the introductory Section~\ref{sect:intro}.)
The reason is that the $m\times m$-matrix $A_Q$ inherits some important structural properties from the integral operator $A$:

\smallskip

\begin{itemize}
\item If $A$ is selfadjoint, then $A_Q$ is Hermitian (see Footnote~\ref{ft:hermitian}).\\*[-3mm]
\item If $A$ is positive semidefinite, then, by (\ref{eq:semidef}), $A_Q$ is positive semidefinite, too.
\end{itemize}

\smallskip

\noindent
This way, for instance, the computational cost for calculating the finite-dimensional determinant is cut to half, if by structural inheritance $I+zA_Q$ is Hermitian positive definite; the Cholesky decomposition can
then be employed instead of Gaussian elimination with partial pivoting.


\section{Application to Some Entire Kernels of Random Matrix Theory}\label{sect:random}

In this section we study two important examples, stemming from random matrix theory, with entire kernels. By Theorem~\ref{thm:nyerr}, the Nyström-type method based on Gauss--Legendre or Curtis--Clenshaw quadrature
has to exhibit exponential convergence.

\begin{figure}[tbp]
\begin{center}
\begin{minipage}{0.75\textwidth}
{\includegraphics[width=\textwidth]{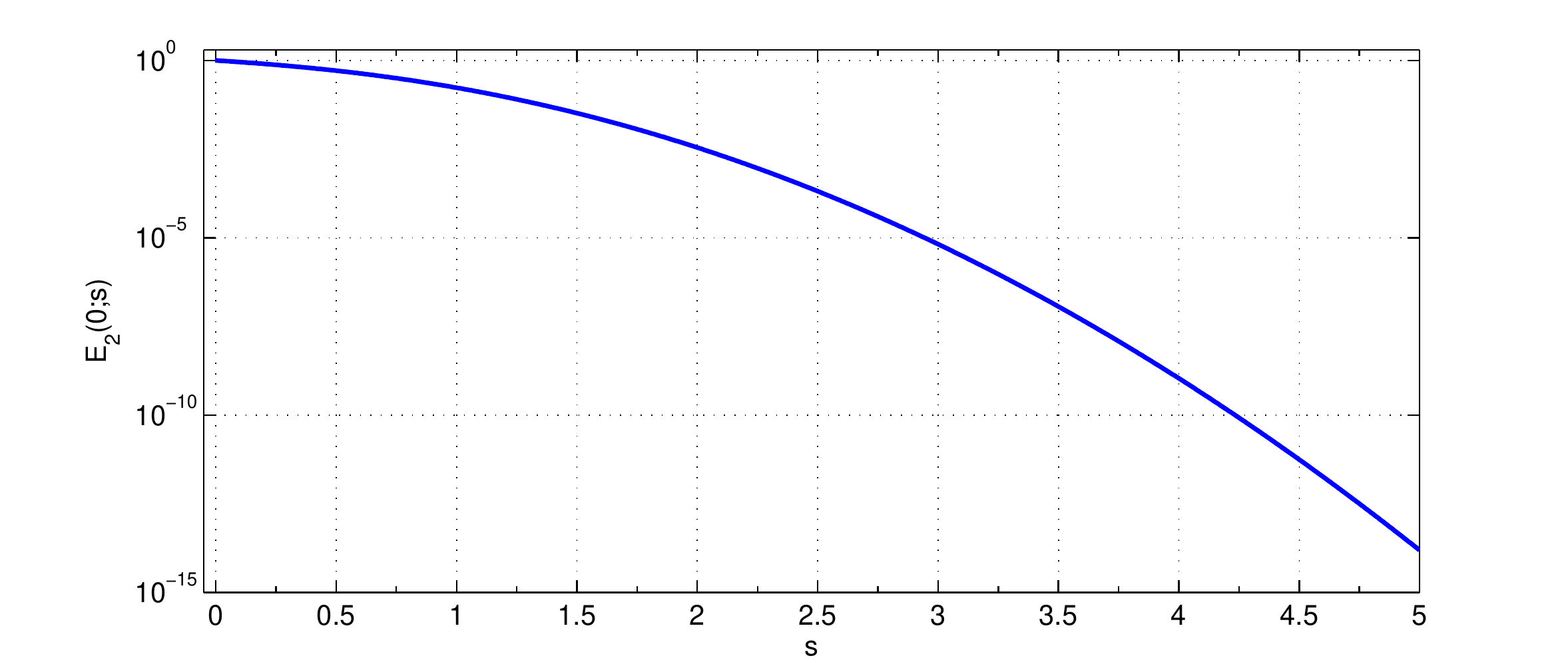}}
\end{minipage}
\end{center}\vspace*{-0.0625cm}
\caption{The probability $E_2(0;s)$ that an interval of length $s$ does not contain, in the bulk scaling limit of level spacing $1$, an eigenvalue of the Gaussian unitary ensemble (GUE). The result shown was
calculated with the Nyström-like method based on Gauss--Legendre with $m=30$; and cross-checked against the asymptotic
expansion $\log E_2(0;s) = -\pi^2 s^2/8 - \log(s)/4 + \log(2)/3 - \log(\pi)/4 + 3\zeta'(-1) + O(s^{-1})$ for $s\to\infty$ \protect\cite{MR1469319}.}
\label{fig:E0s}
\end{figure}

\subsection{The sine kernel}\label{sect:sine}

The probability $E_2(0;s)$ (shown in Figure~\ref{fig:E0s}) that an interval of length $s$ does not contain, in the bulk scaling limit of level spacing $1$, an eigenvalue
of the Gaussian unitary ensemble (GUE) is given \cite[Sect.~6.3]{MR2129906} by the Fredholm determinant
\[
E_2(0;s) = \det\left(I - A_s\right)
\]
of the integral operator $A_s$ on $L^2(0,s)$ that is induced by the sine kernel $K$:
\[
A_s u(x) = \int_0^s K(x,y)u(y)\,dy,\qquad K(x,y) = \frac{\sin(\pi(x-y))}{\pi(x-y)}.
\]
Note that $K(x,y)$ is Hermitian and entire on $\C\times\C$; thus $A_s$ is a selfadjoint operator of trace class on $L^2(0,s)$.
(This is already much more than we would need to know for successfully applying and understanding the Nyström-type method. However, to facilitate a
comparison with the Ritz--Galerkin method, we analyze the operator $A_s$ in some more detail.)  The factorization
\begin{equation}\label{eq:sinkernesqrt}
K(x,y) = \frac{1}{2\pi}\int_{-\pi}^\pi e^{i (x-y)\xi}\, d\xi = \frac{1}{2\pi}\int_{-\pi}^\pi e^{i x\xi}e^{-i y\xi}\, d\xi
\end{equation}
of the kernel implies that $A_s$ is positive \emph{definite} with maximal eigenvalue $\lambda_1(A_s)<1$; since, for $0 \neq u \in L^2(0,s)$, we obtain
\begin{multline*}
0 < \langle u ,A_s u\rangle = \int_{-\pi}^\pi \left| \frac{1}{\sqrt{2\pi}}\int_0^s e^{-i x \xi} u(x) \,dx \right|^2 \,d\xi \\*[2mm]
= \int_{-\pi}^\pi |\hat u(\xi)|^2\,d\xi < \int_{-\infty}^\infty |\hat u(\xi)|^2\,d\xi = \|\hat u\|_{\scriptscriptstyle L^2}^2 = \|u\|_{\scriptscriptstyle  L^2}^2.
\end{multline*}
Here, in stating that the inequalities are \emph{strict}, we have used the fact that the Fourier transform $\hat u$ of the function $u \in L^2(0,s)$, which has compact support, is an \emph{entire} function.
Therefore, the perturbation bound of Lemma~\ref{lem:perturb} applies and we obtain, for Ritz--Galerkin as for any Galerkin method, like in the example of Section~\ref{sect:proj}, the basic error estimate
\[
|\det(I-P_m A_s P_m) - \det(I-A_s)| \leq \| P_mA_sP_m - A_s\|_{\scriptscriptstyle\mathcal{J}_1}.
\]
Now, Theorems~\ref{thm:ritzerr} and \ref{thm:nyerr} predict a rapid, exponentially fast
convergence of the Ritz--Galerkin and the Nyström-type methods: In fact, an $m$-dimensional approximation will give an error that decays like $O(e^{-c m})$, for any fixed $c>0$ since, for entire kernels,
the parameter $\rho>1$ can  be chosen arbitrarily in these theorems.

\smallskip
\paragraph{Details of the implementation of the Ritz--Galerkin method} There is certainly no general recipe on how to actually construct the
Ritz--Galerkin method for a specific example, since one would have
to know, more or less exactly, the eigenvalues of $A$. In the case of the sine kernel, however, \citeasnoun{Gau61} had succeeded in doing so.
(See also \citeasnoun[p.~411]{MR1659828} and  \citeasnoun[pp.~124--126]{MR2129906}.) He had observed that the integral operator $\tilde A_t$ on $L^2(-1,1)$, defined by
\[
\tilde A_t u(x) = \int_{-1}^1 e^{i\pi t x y} u(y)\,dy
\]
(which is, by (\ref{eq:sinkernesqrt}), basically a rescaled ``square-root'' of $A_{2t}$), is commuting with the selfadjoint, second-order differential operator
\[
L u(x) = \frac{d}{dx}\left((x^2-1) u'(x) \right) + \pi^2t^2 x^2 u(x)
\]
with boundary conditions
\[
(1-x^2) u(x) |_{x=\pm 1} = (1-x^2) u'(x) |_{x = \pm 1} = 0.
\]
Thus, both operators share the same set of eigenfunctions $u_n$, namely the radial prolate spheroidal wave functions (using the notation of Mathematica 6.0)
\[
u_n(x) = S^{(1)}_{n,0}(\pi t,x) \qquad (n=0,1,2\ldots).
\]
These special functions are even for $n$ even, and odd for $n$ odd. By plugging them into the integral operator $\tilde A_t$ \citename{Gau61} had obtained, after evaluating at $x=0$, the eigenvalues
\[
\lambda_{2k}(\tilde A_t) = \frac{1}{u_{2k}(0)}\int_{-1}^1 u_{2k}(y)\,dy,\qquad \lambda_{2k+1}(\tilde A_t) = \frac{i\pi t}{u_{2k+1}'(0)}\int_{-1}^1 u_{2k+1}(y) y \,dy.
\]
Finally, we have (starting with the index $n=0$ here)
\[
\lambda_n(A_s) = \frac{s}{4} |\lambda_n(\tilde A_{s/2})|^2\qquad (n=0,1,2,\ldots).
\]
Hence, the $m$-dimensional Ritz--Galerkin approximation of $\det(I-A_s)$ is given by
\[
\det(I-P_m A_s P_m) = \prod_{n=0}^{m-1} (1-\lambda_n(A_s)).
\]
While \citename{Gau61} himself had to rely on tables of the spheroidal wave functions \cite{MR0074130}, we can use the fairly recent implementation of these special functions by \citeasnoun{MR1985521}, which now comes
with Mathematica 6.0. This way, we get the following implementation:

\begin{center}\label{prog:mathspher}
\includegraphics[width=0.75\textwidth]{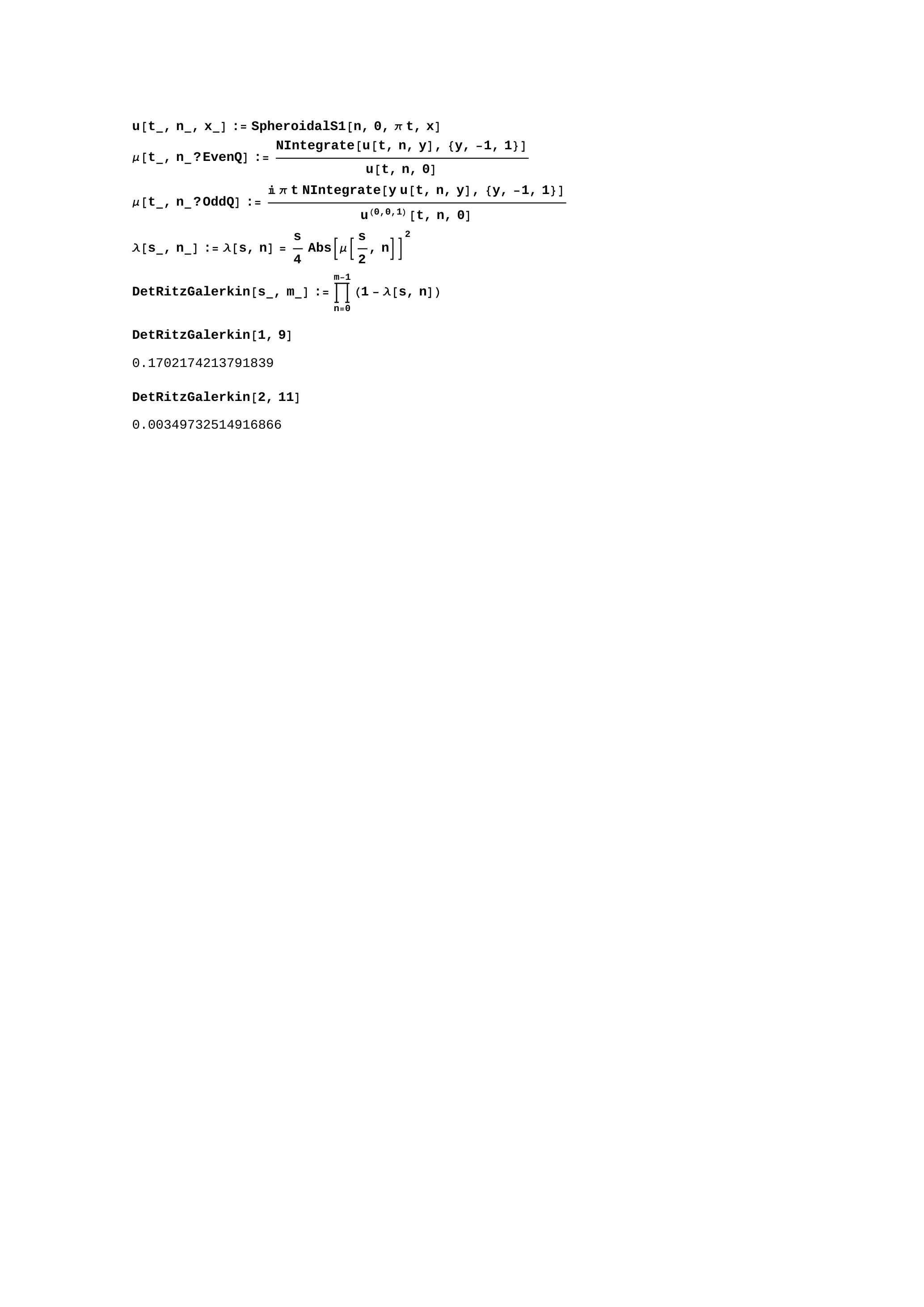}
\end{center}

\begin{figure}[tbp]
\begin{center}
\begin{minipage}{0.49\textwidth}
{\includegraphics[width=\textwidth]{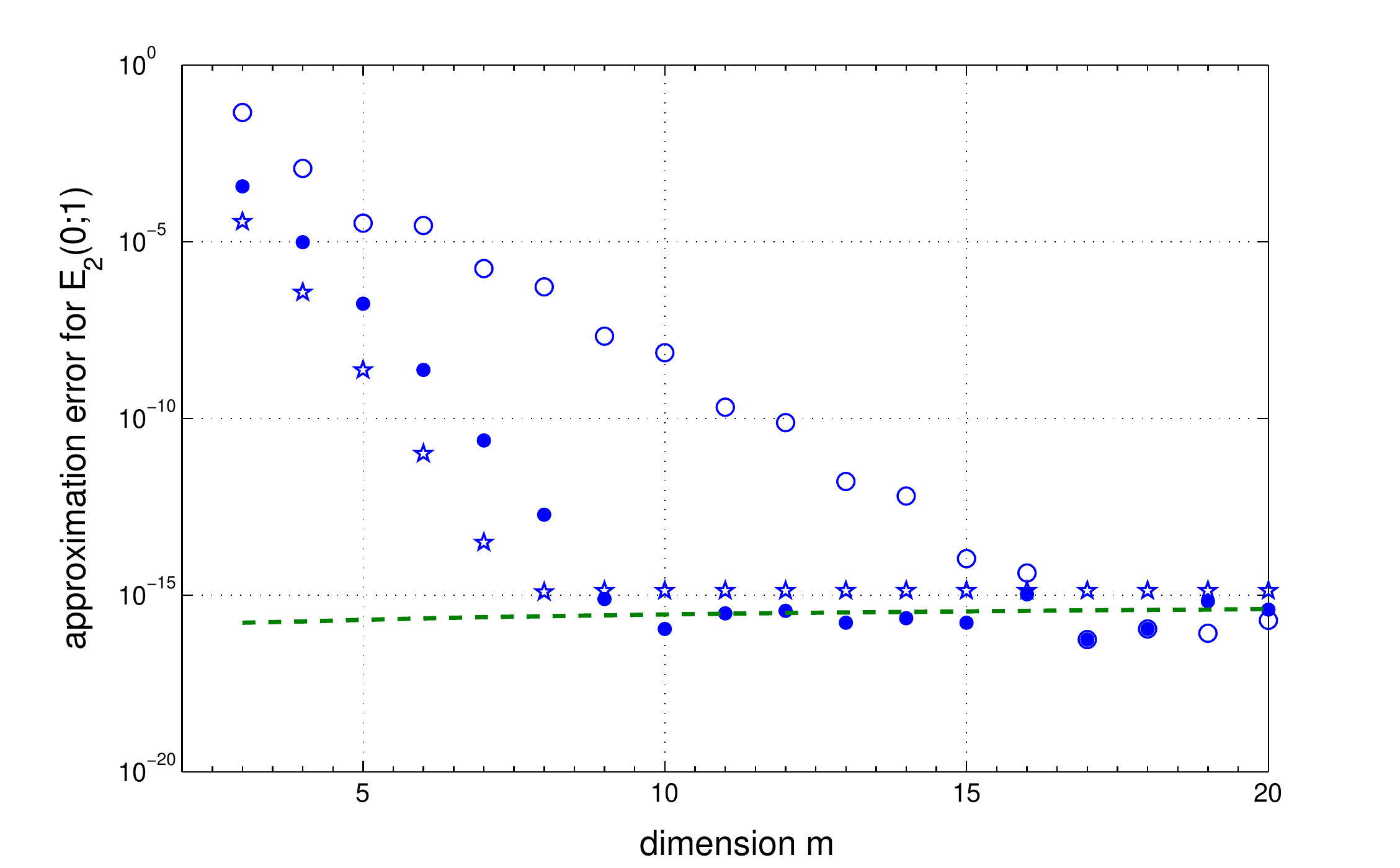}}
\end{minipage}
\hfil
\begin{minipage}{0.49\textwidth}
{\includegraphics[width=\textwidth]{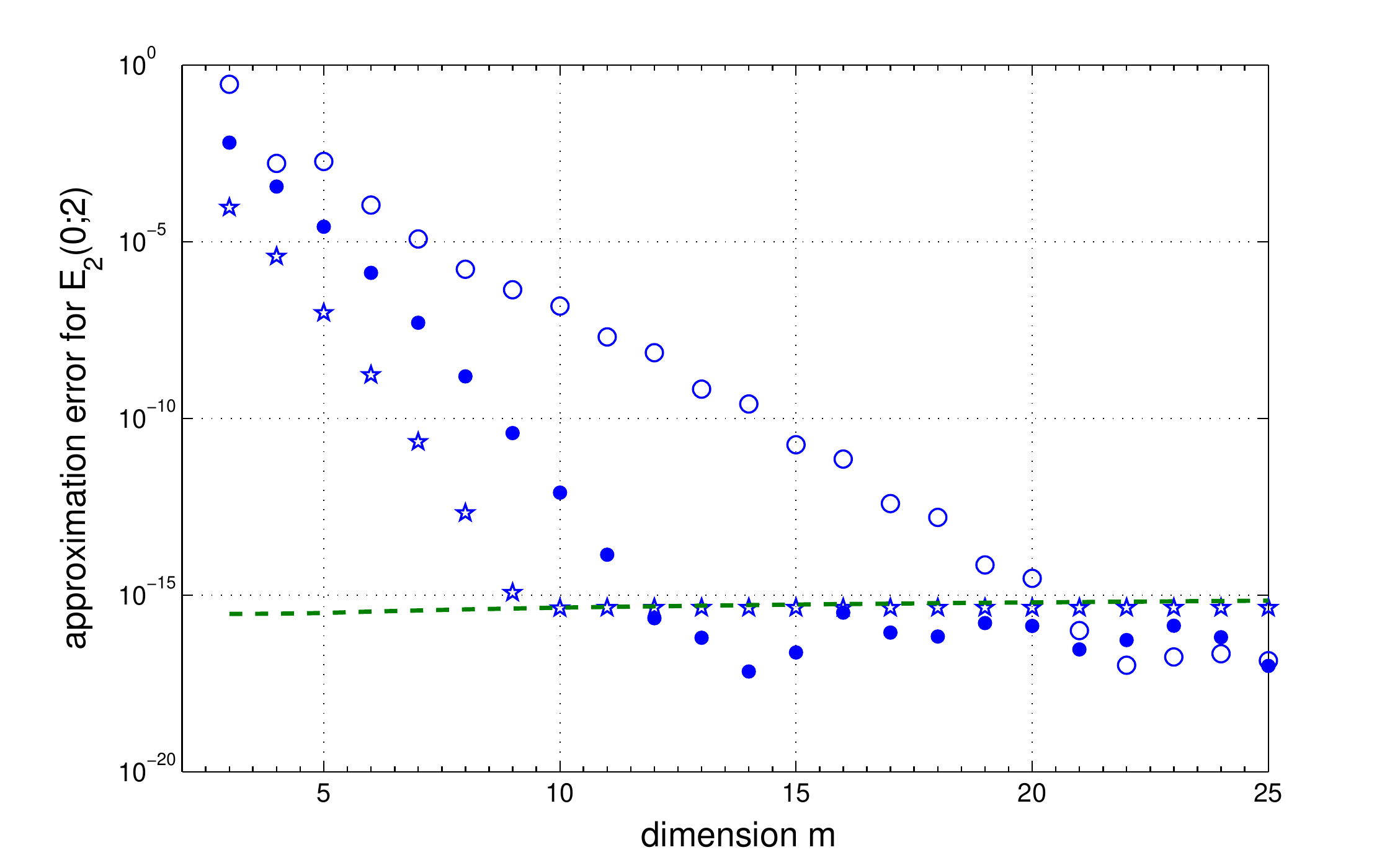}}
\end{minipage}
\end{center}\vspace*{-0.0625cm}
\caption{Convergence of various $m$-dimensional approximations of the Fredholm determinants $E_2(0;1)$ (left) and $E_2(0;2)$ (right): the Nyström-type quadrature methods based on Gauss--Legendre (dots) and Curtis--Clenshaw (circles), as
well as \protect\possessivecite{Gau61} Ritz--Galerkin method based on spheroidal wave functions (stars).
The dashed line shows the amount, according to (\ref{eq:roundoff2}), of roundoff error due to the numerical evaluation of the finite-dimensional determinants; all calculations were done in IEEE double arithmetic with
machine precision $\epsilon = 2^{-53}$.}
\label{fig:E0err}
\end{figure}

\noindent
Given all this, one can understand that \possessivecite{MR573370} beautiful discovery of expressing $E_2(0;s)$ by formula (\ref{eq:jimbo}) in terms of the fifth Painlevé transcendent
was generally considered to be a major break-through even for its numerical evaluation \cite[Footnote~10]{MR1791893}. However,  note how much less knowledge suffices for
the application of the far more general Nyström-type method: continuity of $K$ makes it applicable, and $K$ being entire guarantees rapid, exponentially fast convergence. That is all.

\smallskip
\paragraph{An actual numerical experiment} Figure~\ref{fig:E0err} shows the convergence (in IEEE machine arithmetic) of an actual  calculation of the numerical values $E_2(0;1)$ and $E_2(0;2)$.
We observe
that the Nyström-type method based on Gauss--Legendre has an exponentially fast convergence rate comparable  to the Ritz--Galerkin method. Clenshaw--Curtis needs
a dimension $m$
that is about twice as large as for Gauss--Legendre to achieve the same accuracy. This matches the fact that Clenshaw--Curtis has the order $\nu=m$, which is half
the order $\nu=2m$
of Gauss--Legendre,
and shows that the bounds of Theorem~\ref{thm:nyerr} are rather sharp with respect to $\nu$ (there is no ``kink'' phenomenon here, cf. \citeasnoun[p.~84]{Tref08}).
The dashed line shows the amount, as estimated in (\ref{eq:roundoff2}), of roundoff error that stems from the numerical evaluation of the finite dimensional $m\times m$-determinant itself.
Note that this bound is essentially the same for all the three methods and can easily be calculated in course of the numerical evaluation. We observe that this
bound is explaining the actual onset of numerical ``noise'' in all the three methods reasonably well.

\smallskip
\paragraph{Remark}
Note that the Nyström-type method \emph{outperforms} the Ritz--Galer\-kin me\-thod by far. First, the Nyström-type method is general, simple, and straightforwardly implemented (see the code
given in Section~\ref{sect:intro}); in contrast, the Ritz--Galerkin
depends on some detailed knowledge about the eigenvalues and requires numerical access to the spheroidal wave functions. Second, there is no substantial gain,
as compared to the Gauss--Legendre based method,
in the convergence rate from knowing the eigenvalues exactly. Third, and most important, the computing time
for the Ritz--Galerkin runs well into several minutes, whereas both versions of the Nyström-type method require just a few \emph{milliseconds}.

\subsection{The Airy kernel}\label{sect:airy}

\begin{figure}[tbp]
\begin{center}
\begin{minipage}{0.75\textwidth}
{\includegraphics[width=\textwidth]{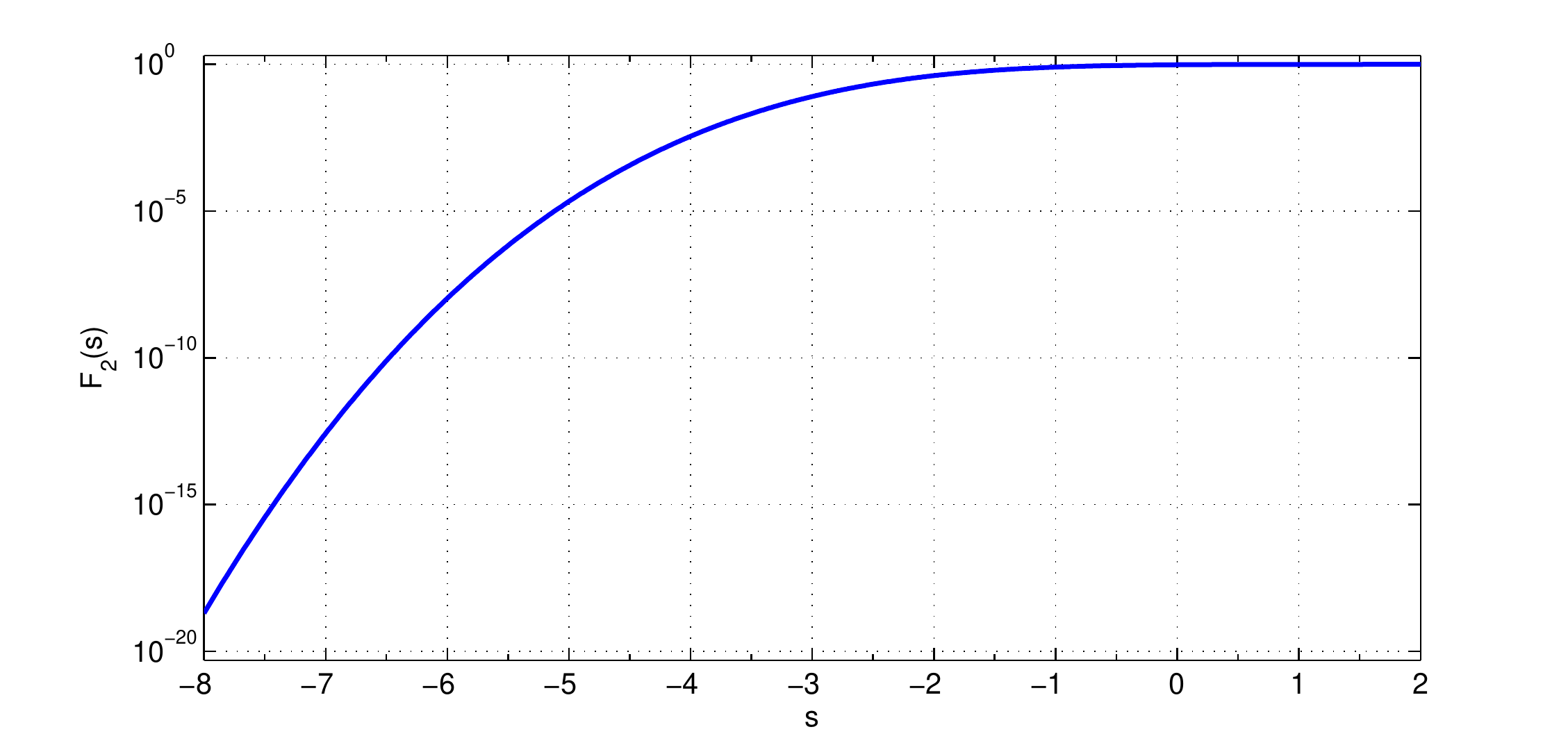}}
\end{minipage}
\end{center}\vspace*{-0.0625cm}
\caption{The Tracy--Widom distribution $F_2(s)$; that is, in the edge scaling limit, the probability of the maximal eigenvalue of the Gaussian unitary ensemble (GUE) being not larger than $s$. The result shown
was calculated with the Nyström-like method based on Gauss--Legendre with $m=50$; and cross-checked against the asymptotic
expansion $\log F_2(-s) = -s^3/12 - \log(s)/8 + \log(2)/24 + \zeta'(-1) + O(s^{-3/2})$ for $s\to\infty$ \protect\cite{MR2373439}.}
\label{fig:F2}
\end{figure}

The Tracy--Widom distribution $F_2(s)$ (shown in Figure~\ref{fig:F2}), that is, in the edge scaling limit, the probability of the maximal eigenvalue of the Gaussian unitary ensemble (GUE) being not larger than $s$, is given \cite[§24.2]{MR2129906}
by the Fredholm determinant
\begin{equation}\label{eq:F2}
F_2(s) = \det(I-A_s)
\end{equation}
of the integral operator $A_s$ on $L^2(s,\infty)$ that is induced by the Airy kernel $K$:
\begin{equation}\label{eq:airykernel}
A_s u(x) = \int_s^\infty K(x,y) u(y)\,dy,\qquad K(x,y) = \frac{\Ai(x)\Ai'(y) - \Ai(y)\Ai'(x)}{x-y}.
\end{equation}
Note that $K$ is Hermitian and entire on $\C \times \C$; thus $A_s$ is selfadjoint. (Again, this is already already about all we would need to know for successfully applying and understanding the Nyström-type method.
However, we would like to show that, as for the sine kernel, the strong perturbation bound of Lemma~\ref{lem:perturb} applies to the Airy kernel, too.)
There is the factorization \cite[Eq.~(4.5)]{MR1257246}
\[
K(x,y) = \int_0^\infty \Ai(x+\xi)\Ai(y+\xi)\,d\xi,
\]
which relates the Airy kernel with the Airy transform \cite[§4.2]{MR2114198} in a similar way as the sine kernel is related by (\ref{eq:sinkernesqrt}) with the Fourier transform. This proves, because of the super-exponential
decay of $\Ai(x) \to 0$ as $x\to 0$, that $A_s$ is the product of two Hilbert--Schmidt operators on $L^2(s,\infty)$ and therefore of trace class. Moreover, $A_s$ is positive semi-definite with maximal eigenvalue $\lambda_1(A) \leq 1$;
since by the Parseval--Plancherel equality \cite[Eq.~(4.27)]{MR2114198} of the Airy transform we obtain, for $u \in L^2(s,\infty)$,
\begin{multline*}
0 \leq \langle u,A_s u\rangle = \int_0^\infty \left| \int_0^\infty \Ai(x+\xi)u(x)\,dx \right|^2 \,d\xi \\*[2mm]
\leq  \int_{-\infty}^\infty \left| \int_0^\infty \Ai(x+\xi)u(x)\,dx \right|^2 \,d\xi = \|u\|_{L^2}^2.
\end{multline*}
More refined analytic arguments, or a carefully controlled numerical approximation, show the strict inequality $\lambda_1(A) < 1$; the perturbation bound of Lemma~\ref{lem:perturb} applies.

\smallskip

\paragraph{Modification of the Nyström-type method for infinite intervals} The quadrature methods of Section~\ref{sect:quad} are not directly applicable here, since the integral operator $A_s$ is defined
by an integral over the infinite interval $(s,\infty)$. We have the following three options:

\begin{figure}[tbp]
\begin{center}
\begin{minipage}{0.725\textwidth}
{\includegraphics[width=\textwidth]{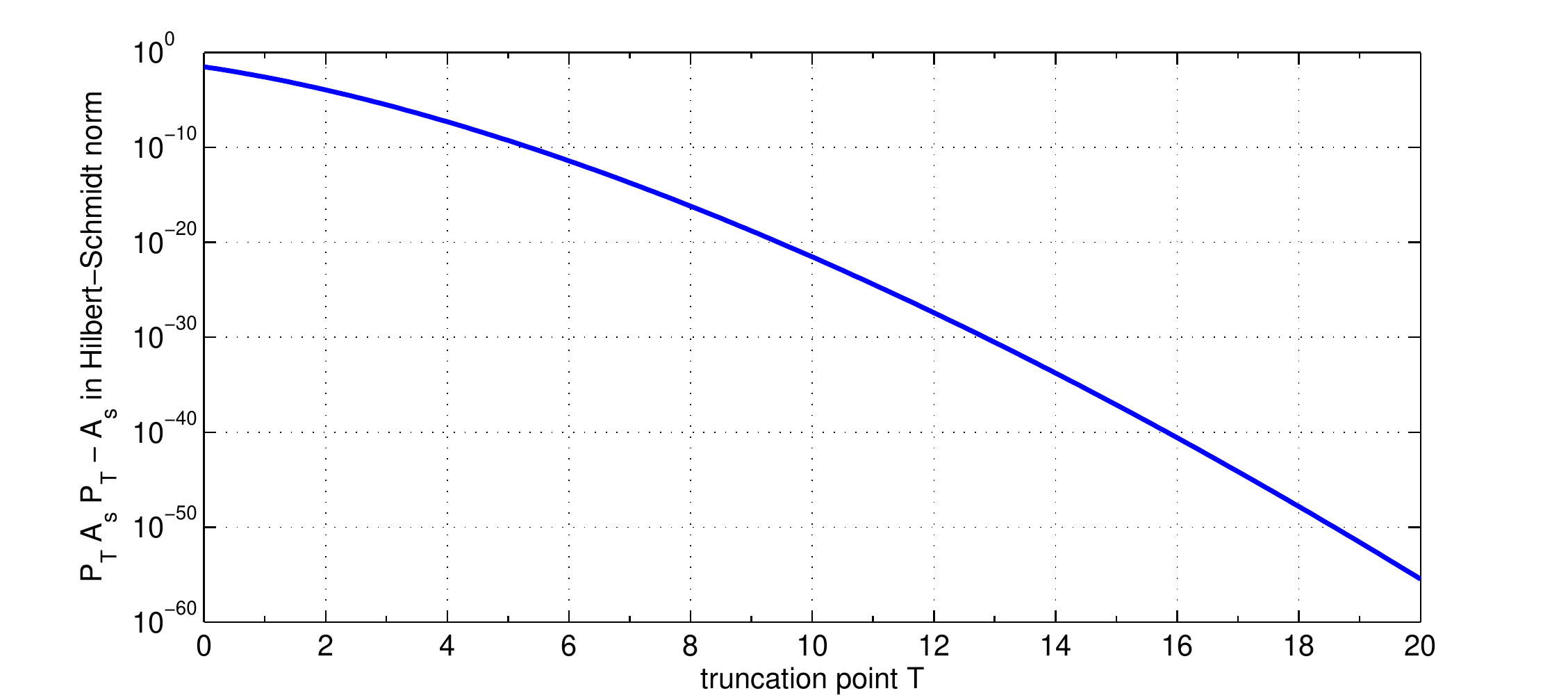}}
\end{minipage}
\end{center}\vspace*{-0.0625cm}
\caption{Values of the expression $\|P_T A_s P_T - A_s \|_{\scriptscriptstyle\mathcal{J}_2}$, which bounds, by (\ref{eq:truncerr}), the error in $F_2(s)$ committed by truncating the integral in (\ref{eq:airykernel}) at a point $T>s$.}
\label{fig:truncerr}
\end{figure}

\smallskip

\begin{enumerate}
\item Using a Gauss-type quadrature formula on $(s,\infty)$ that is tailor-made for the super-exponential decay of the Airy function. Such formulae have recently been constructed by \citeasnoun{MR1896388}.\\*[-3mm]
\item Truncating the integral in (\ref{eq:airykernel}) at some point $T>s$. That is, before using the Nyström-type method with a quadrature formula on the finite interval $[s,T]$ (for which the
second part of Theorem~\ref{thm:nyerr} is then applicable, showing exponential convergence), we approximate the
Fredholm determinant (\ref{eq:F2}) by
\[
\det(I-P_T A_s P_T) = \det\left(I - A_s\projected{L^2(s,T)}\right),
\]
where the orthonormal projection $P_T : L^2(s,\infty) \to L^2(s,T)$, $P u = u \cdot \chi_{[s,T]}$, denotes the multiplication operator by the characteristic function of $[s,T]$. This way
we commit an additional truncation error, which has, by passing through the perturbation bound of Lemma~\ref{lem:perturb}, the computable bound
\begin{multline}\label{eq:truncerr}
|\det(I-P_T A_s P_T) - \det(I-A_s) | \leq \|P_T A_s P_T - A_s \|_{\scriptscriptstyle\mathcal{J}_1} \leq \\*[2mm]
 \|P_T A_s P_T - A_s \|_{\scriptscriptstyle\mathcal{J}_2} = \left(\int_T^\infty\int_T^\infty |K(x,y)|^2\,dxdy\right)^{1/2}.
\end{multline}
Figure~\ref{fig:truncerr} shows this bound as a function of the truncation point $T$. We observe that, for the purpose of calculating (within IEEE machine arithmetic) $F_2(s)$ for $s \in [-8,2]$---as shown in Figure~\ref{fig:F2}---,
a truncation point at $T=16$ would be more than sufficiently safe.\\*[-3mm]
\item Transforming the infinite intervals to finite ones. By using a monotone and smooth transformation $\phi_s:(0,1) \to (s,\infty)$, defining the transformed integral operator $\tilde A_s$ on $L^2(0,1)$ by
\[
\tilde A_s u(\xi) = \int_0^1 \tilde K_s(\xi,\eta) u(\eta)\,d\eta,\quad \tilde K_s(\xi,\eta) = \sqrt{\phi'_s(\xi)\phi'_s(\eta)}\, K(\phi_s(\xi),\phi_s(\eta)),
\]
gives the identity
\[
F_s(s) = \det\left(I-A_s\projected{L^2(s,\infty)}\right) = \det\left(I-\tilde A_s\projected{L^2(0,1)}\right).
\]
For the super-exponentially decaying Airy kernel $K$ we suggest the transformation
\begin{equation}\label{eq:transform}
\phi_s(\xi) = s + 10 \tan(\pi \xi/2)\qquad (\xi\in(0,1)).
\end{equation}
Note that though $\tilde K(\xi,\eta)$ is a smooth function on $[0,1]$ it possesses, as a function on $\C\times \C$, essential singularities on the lines $\xi=1$ or $\eta=1$. Hence,
we can only apply the first part of Theorem~\ref{thm:nyerr} here, which then shows, for Gauss--Legendre and Clenshaw--Curtis, a super-algebraic convergence rate, that is, $O(m^{-k})$ for arbitrarily high algebraic order $k$.
The actual numerical experiments reported in Figure~\ref{fig:F2err} show, in fact, even exponential convergence.
\end{enumerate}

\smallskip

\begin{figure}[tbp]
\begin{center}
\begin{minipage}{0.49\textwidth}
{\includegraphics[width=\textwidth]{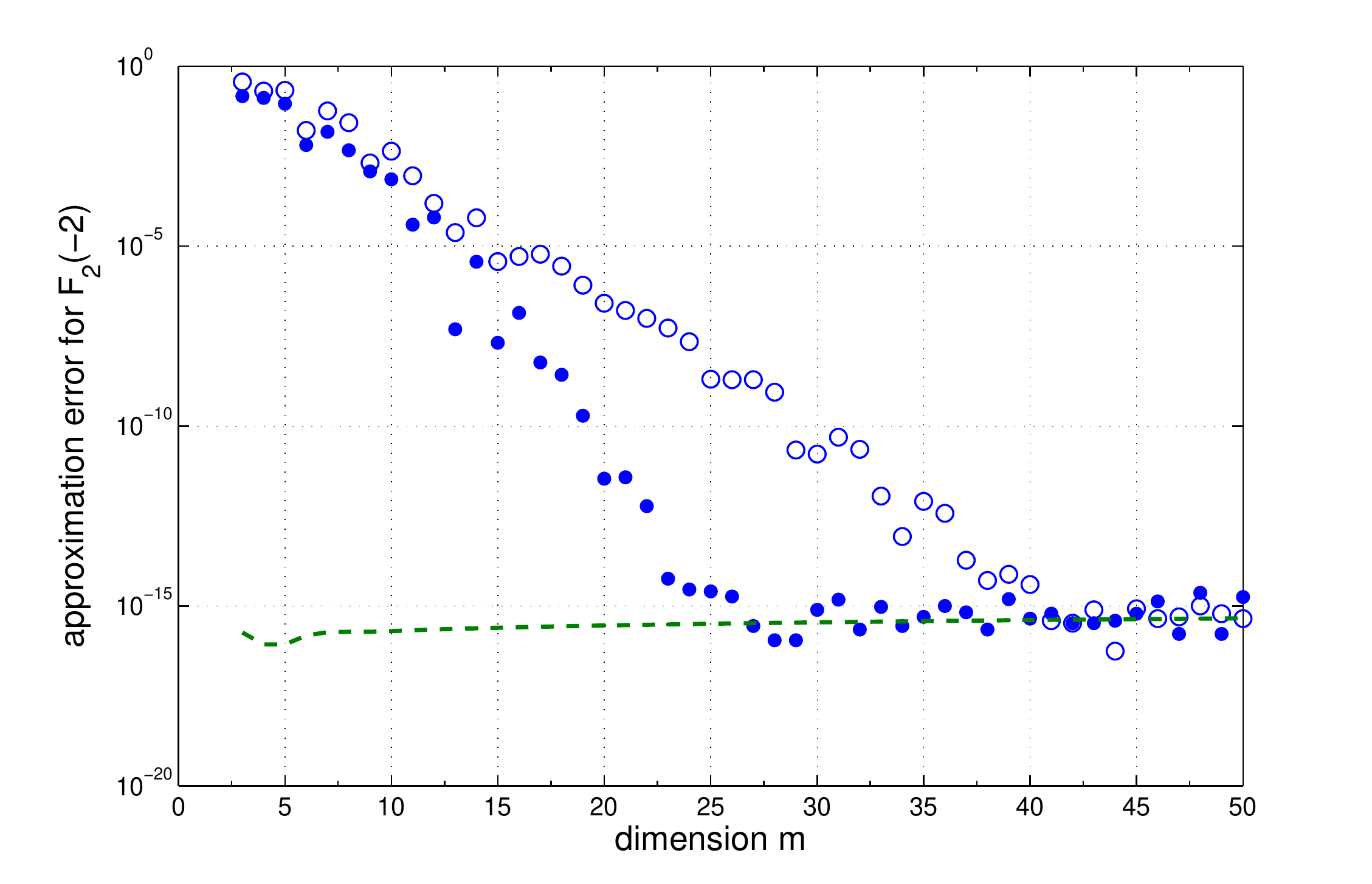}}
\end{minipage}
\hfil
\begin{minipage}{0.49\textwidth}
{\includegraphics[width=\textwidth]{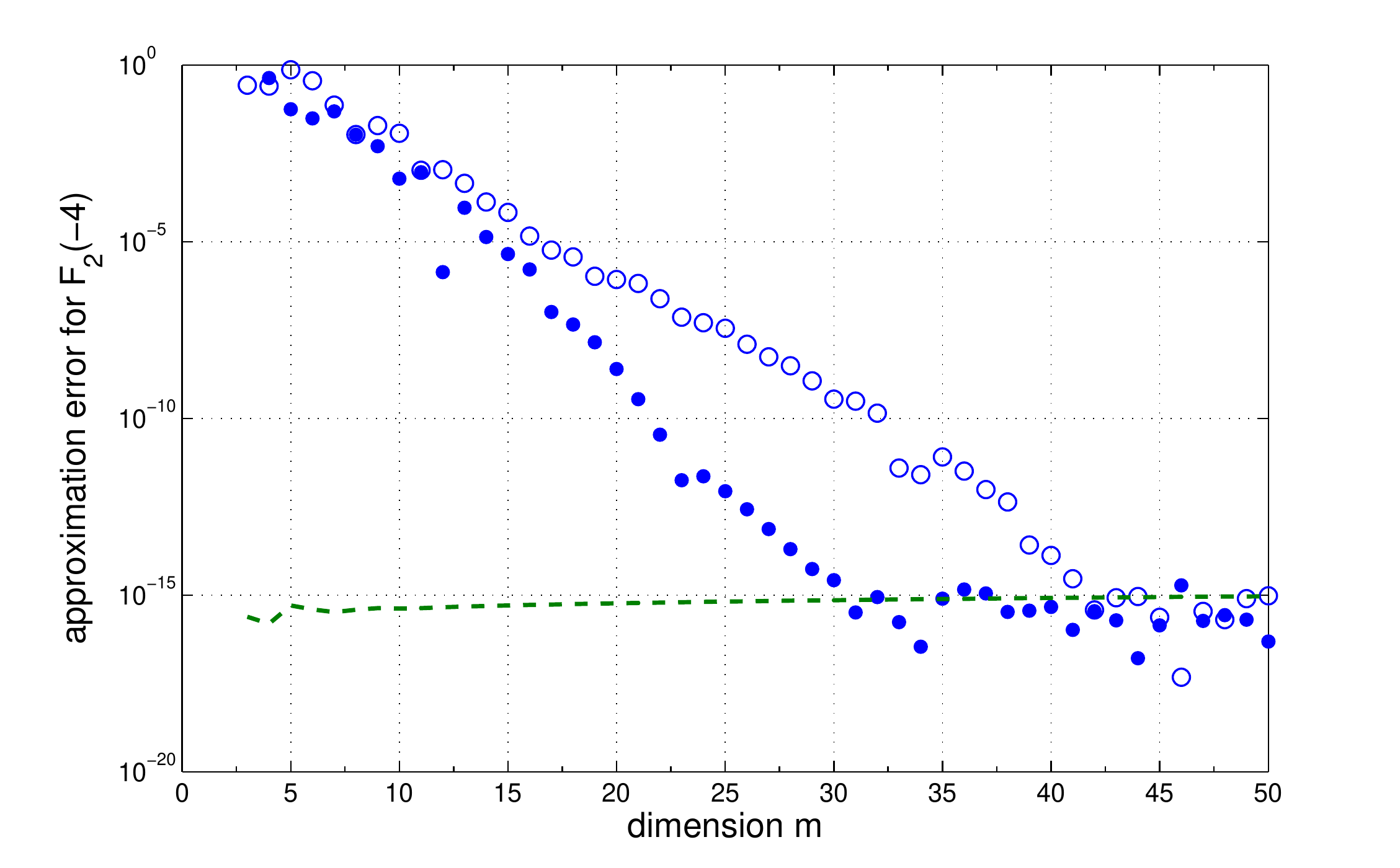}}
\end{minipage}
\end{center}\vspace*{-0.0625cm}
\caption{Convergence of the $m$-dimensional Nyström-type approximation---using the transformation (\ref{eq:transform})---of the Fredholm determinants $F_2(-2)$ (left) and $F_2(-4)$ (right),
based on Gauss--Legendre (dots) and Curtis--Clenshaw (circles).
The dashed line shows the amount, according to (\ref{eq:roundoff2}), of roundoff error due to the numerical evaluation of the finite-dimensional determinants; all calculations were done in IEEE double arithmetic
($\epsilon = 2^{-53}$).}
\label{fig:F2err}
\end{figure}

\noindent
From the general-purpose point of view, we recommend the third option. It is straightforward and does not require any specific knowledge, or construction, as would
the first and second option.

\smallskip
\paragraph{Remarks on other numerical methods to evaluate $F_2(s)$} As for the sine kernel, there is a selfadjoint second-order ordinary differential operator commuting with $A_s$ \cite[p.~166]{MR1257246}.
Though
this has been used to derive some asymptotic formulas, nothing is known in terms of special functions that would enable us to base a Ritz--Galerkin method on it. As \citeasnoun[p.~453]{MR2129906} puts it: ``In the case of the Airy kernel \ldots\ the differential equation
did not receive much attention and its solutions are not known.''

Prior to our work of calculating $F_2(s)$  directly from its determinantal expression, all the published numerical calculations started with \possessivecite{MR1257246} remarkable discovery of expressing $F_2(s)$ in terms
of the second Painlevé transcendent; namely
\[
F_2(s) = \exp\left(-\int_s^\infty (z-s) q(z)^2 \,dz\right)
\]
with $q(z)$ being the Hastings--McLeod \citeyear{MR555581} solution of Painlevé II,
\begin{equation}\label{eq:hastings}
q''(z) = 2 q(z)^3 + z\,q(z),\qquad q(z)\sim\Ai(z)\text{ \;as\; $z \to \infty$}.
\end{equation}
Initial value methods for the numerical integration of (\ref{eq:hastings}) suffer from severe stability problems \cite{MR2070096}. Instead, the numerically stable way of solving~(\ref{eq:hastings}) goes by considering $q(z)$ as a
connecting orbit, the other asymptotic state being
\[
q(z) \sim \sqrt\frac{-z}{2} \text{ \;as\; $z \to -\infty$},
\]
and using numerical two-point boundary value solvers \cite{Dieng05}.

\section{Extension to Systems of Integral Operators}\label{sect:matrixkernels}

We now consider an $N\times N$ system of integrals operators that is induced by continuous kernels $K_{ij} \in C(I_i\times I_j)$ ($i,j=1,\ldots,N$), where the $I_i \subset \R$
denote some finite intervals. The corresponding system of integral equations
\begin{equation}\label{eq:system}
u_i(x) + z\sum_{j=1}^N \int_{I_j} K_{ij}(x,y) u_j(y)\,dy = f_i(x)\qquad (x \in I_i,\; i,j=1,\ldots,N)
\end{equation}
defines, with $u=(u_1,\ldots,u_N)$ and $f=(f_1,\ldots,f_N)$, an operator equation
\[
u + z A u = f
\]
on the Hilbert space $\mathcal{H} = L^2(I_1) \oplus \cdots \oplus L^2(I_N)$.

\subsection{The Fredholm determinant for systems}\label{sect:sysdet}
Assuming $A$ to be  trace class, let us express $\det(I+z A)$ in terms of the system $(K_{ij})$ of kernels. To this end we show that the system (\ref{eq:system})
is equivalent to a {\em single} integral equation; an idea that, essentially,  can already be found in the early work of \citeasnoun[p.~388]{34.0422.02}.
To simplify notation, we assume that the $I_k$ are  \emph{disjoint} (a simple transformation of the system of integral equations by a set of translations will arrange for this).
We then have\footnote{The general case could be dealt with by  the topological sum, or coproduct, of the intervals $I_k$,
\[
\coprod_{k=1}^N I_k = \bigcup_{k=1}^N I_k \times \{k\}.
\]
One would then use \cite{MR2018275} the natural isometric isomorphism
\[
\mathcal{H} = \bigoplus_{k=1}^N L^2(I_k) \cong L^2\left(\coprod_{k=1}^N I_k \right).
\]}
\[
\mathcal{H} = \bigoplus_{k=1}^N L^2(I_k) \cong L^2(I), \qquad I = I_1 \cup \ldots \cup I_n.
\]
by means of the natural isometric isomorphism
\[
(u_1,\ldots,u_N) \mapsto u = \sum_{k=1}^N \chi_k u_k
\]
where $\chi_k$ denotes the characteristic function of the interval $I_k$. Given this picture, the operator $A$ can be viewed being the integral operator on $L^2(I)$
that is induced by the kernel
\[
K(x,y) = \sum_{i,j=1}^N \chi_i(x) K_{ij}(x,y) \chi_j(y).
\]
By (\ref{eq:detfred})
we finally get (cf. \citeasnoun[Thm~6.1]{MR1744872})
\begin{align*}
\det(I + z A) &= \sum_{n=0}^\infty \frac{z^n}{n!} \int_{I^n} \det\left(K(t_p,t_q)\right)_{p,q=1}^n \,dt_1\cdots\,dt_n \\*[2mm]
&= \sum_{n=0}^\infty \frac{z^n}{n!} \int_{I^n} \underbrace{\left(\sum_{i_1,\ldots,i_n=1}^N \chi_{i_1}(t_1) \cdots \chi_{i_n}(t_n)\right)}_{=1} \det\left(K(t_p,t_q)\right)_{p,q=1}^n \,dt_1\cdots\,dt_n \\*[2mm]
&= \sum_{n=0}^\infty \frac{z^n}{n!} \sum_{i_1,\ldots,i_n=1}^N \int_{I_{i_1}\times\cdots\times I_{i_n}} \det\left(K(t_p,t_q)\right)_{p,q=1}^n \,dt_1\cdots\,dt_n \\*[2mm]
&= \sum_{n=0}^\infty \frac{z^n}{n!} \sum_{i_1,\ldots,i_n=1}^N \int_{I_{i_1}\times\cdots\times I_{i_n}} \det\left(K_{i_p i_q}(t_p,t_q)\right)_{p,q=1}^n \,dt_1\cdots\,dt_n.
\end{align*}
By eventually transforming back to the originally given, non-disjoint intervals $I_k$, the last expression is the general formula that we have sought for: $\det(I+z A) = d(z)$ with
\begin{equation}\label{eq:systemdet}
d(z) = \sum_{n=0}^\infty \frac{z^n}{n!} \sum_{i_1,\ldots,i_n=1}^N \int_{I_{i_1}\times\cdots\times I_{i_n}} \det\left(K_{i_p i_q}(t_p,t_q)\right)_{p,q=1}^n \,dt_1\cdots\,dt_n.
\end{equation}
This is a perfectly well defined entire function for \emph{any} system $K_{ij}$ of continuous kernels, independently of whether $A$ is a trace class operator or not.
We call it the Fredholm determinant of the system.

\smallskip

\paragraph{The determinant of block matrices} In preparation of our discussion of Nyström-type methods for approximating (\ref{eq:systemdet}) we shortly discuss the determinant
of $N\times N$-block matrices
\[
A = \begin{pmatrix}
A_{11} & \cdots & A_{1N} \\*[1mm]
 \vdots & & \vdots \\*[1mm]
 A_{N1} & \cdots & A_{NN}
\end{pmatrix} \in \C^{M\times M},\qquad A_{ij} \in \C^{m_i \times m_j},\quad M = m_1 + \cdots + m_N.
\]
Starting with von Koch's formula (\ref{eq:vonKoch}), an argument\footnote{Alternatively, we can use (\ref{eq:detgrothen}) and, recursively, the ``binomial'' formula \cite[p.~121]{MR0224623}
\[
\bigwedge\nolimits^k(V_0 \oplus V_1) = \bigoplus_{j=0}^k \left(\bigwedge\nolimits^jV_0\right) \otimes \left(\bigwedge\nolimits^{k-j}V_1 \right)
\]
of exterior algebra, which is valid for general vector spaces $V_0$ and $V_1$.}
that is similar to the one that has led us to (\ref{eq:systemdet}) yields
\begin{equation}\label{eq:blockmatrixdet}
\det(I+z A) = \sum_{n=0}^\infty  \frac{z^n}{n!} \sum_{i_1,\ldots,i_n=1}^N \sum_{k_1=1}^{m_{i_1}} \cdots \sum_{k_n=1}^{m_{i_n}} \det\left((A_{i_p,i_q})_{k_p,k_q}\right)_{p,q=1}^n.
\end{equation}

\subsection{Quadrature methods for systems}

Given a quadrature formula for each of the intervals~$I_i$, namely
\begin{equation}\label{eq:quadsys}
Q_i(f) = \sum_{j=1}^{m_i} w_{ij} f(x_{ij}) \;\approx\; \int_{I_i} f(x)\,dx,
\end{equation}
we aim at generalizing the Nyström-type method of Section~\ref{sect:quad}. We restrict ourselves to the case of positive weights, $w_{ij}>0$, and generalize the method from the single operator case as given in (\ref{eq:detnysym}) to the
system case in the following form:
\begin{equation}\label{eq:nysysdef}
d_Q(z) = \det(I+z A_Q), \qquad A_Q = \begin{pmatrix}
A_{11} & \cdots & A_{1N} \\*[1mm]
 \vdots & & \vdots \\*[1mm]
 A_{N1} & \cdots & A_{NN}
\end{pmatrix}
\end{equation}
with the sub-matrices $A_{ij}$ defined by the entries
\[
(A_{ij})_{p,q} = w_{ip}^{1/2} K_{ij}(x_{ip},x_{jq}) w_{jq}^{1/2} \qquad (p=1,\ldots,m_i,\; q=1,\ldots,m_j).
\]
This can be as straightforwardly implemented as in the case of a single operator. Now, a convergence theory can be built on a representation of the error $d_Q(z)-d(z)$ that is analogous to (\ref{eq:nyerr}).
To this end we simplify the notation by introducing the following functions on $I_{i_1}\times\cdots\times I_{i_n}$,
\[
K_{i_1,\ldots,i_n}(t_1,\ldots,t_n) = \det\left(K_{i_p i_q}(t_p,t_q)\right)_{p,q=1}^n\,,
\]
and by defining, for functions $f$ on $I_{i_1}\times\cdots\times I_{i_n}$, the product quadrature formula
\begin{multline*}
\left(\prod_{k=1}^n Q_{i_k} \right)(f) = \sum_{j_1=1}^{m_{i_1}} \cdots   \sum_{j_n=1}^{m_{i_n}} w_{i_1j_1} \cdots w_{i_nj_n} f(x_{i_1j_1},\ldots,x_{i_nj_n}) \\*[2mm]
\approx \int_{I_{i_1}\times\cdots\times I_{i_n}} f(t_1,\ldots,t_n)\,dt_1\cdots\,dt_n.
\end{multline*}
Thus, we can rewrite the Fredholm determinant (\ref{eq:systemdet}) in the form
\[
d(z) = 1 + \sum_{n=1}^\infty \frac{z^n}{n!} \sum_{i_1,\ldots,i_n=1}^N \int_{I_{i_1}\times\cdots\times I_{i_n}}K_{i_1,\ldots,i_n}(t_1,\ldots,t_n) \,dt_1\cdots\,dt_n.
\]
Likewise, by observing the generalized von Koch formula (\ref{eq:blockmatrixdet}), we put the definition~(\ref{eq:nysysdef}) of $d_Q(z)$ to the form
\[
d_Q(z) = 1+  \sum_{n=1}^\infty \frac{z^n}{n!} \sum_{i_1,\ldots,i_n=1}^N \left(\prod_{k=1}^n Q_{i_k} \right)(K_{i_1,\ldots,i_n}).
\]
Thus, once again, the Nyström--type method amounts for approximating each multidimensional integral of the power series of the Fredholm determinant by using a product
quadrature rule. Given this representation, Theorem~\ref{thm:nyerr} can straightforwardly be generalized to the system case:

\smallskip

\begin{theorem} If $K_{ij} \in C^{k-1,1}(I_i \times I_j)$, then for each set (\ref{eq:quadsys}) of quadrature formulae of a common order $\nu \geq k$ with positive weights there holds the error estimate
\[
d_Q(z) - d(z) = O(\nu^{-k}) \qquad (\nu\to\infty),
\]
uniformly for bounded $z$.

If the $K_{ij}$ are bounded analytic on $\mathcal{E}_\rho(I_i) \times \mathcal{E}_\rho(I_j)$ (with the ellipse $\mathcal{E}_\rho(I_i)$ defined, with respect to $I_i$, as in Theorem~\ref{thm:quaderr}),
then for each set (\ref{eq:quadsys}) of quadrature formulae of a common order $\nu$ with positive weights there holds the error estimate
\[
d_Q(z)-d(z) = O(\rho^{-\nu})  \qquad (\nu\to\infty),
\]
uniformly for bounded $z$.
\end{theorem}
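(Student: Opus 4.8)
The plan is to run the proof of Theorem~\ref{thm:nyerr} essentially verbatim; the only genuinely new ingredient is some bookkeeping that keeps the $N^2$ kernels $K_{ij}$, which live on different intervals $I_i\times I_j$, under a single set of uniform norms. From the two displayed series for $d(z)$ and $d_Q(z)$ just above the statement one reads off the error representation
\[
d_Q(z)-d(z) = \sum_{n=1}^\infty \frac{z^n}{n!}\sum_{i_1,\ldots,i_n=1}^N \left(\Bigl(\prod_{k=1}^n Q_{i_k}\Bigr)(K_{i_1,\ldots,i_n}) - \int_{I_{i_1}\times\cdots\times I_{i_n}} K_{i_1,\ldots,i_n}(t_1,\ldots,t_n)\,dt_1\cdots\,dt_n\right),
\]
so everything reduces to bounding, uniformly in the multi-index $(i_1,\ldots,i_n)$, the product-quadrature error for the mixed determinants $K_{i_1,\ldots,i_n}(t_1,\ldots,t_n)=\det\bigl(K_{i_pi_q}(t_p,t_q)\bigr)_{p,q=1}^n$.

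First I would establish the analogue of Lemma~\ref{lem:Kn} with the uniform norms $\|K\|_{L^\infty}=\max_{i,j}\|K_{ij}\|_{L^\infty(I_i\times I_j)}$ and, in the smooth case, $\|K\|_k=\max_{i,j}\|K_{ij}\|_k$ (the norm of (\ref{eq:Knormk}) taken on $I_i\times I_j$); in the analytic case, $\|K\|_{L^\infty(\mathcal{E}_\rho)}=\max_{i,j}\|K_{ij}\|_{L^\infty(\mathcal{E}_\rho(I_i)\times\mathcal{E}_\rho(I_j))}$. Hadamard's inequality still applies to $K_{i_1,\ldots,i_n}$ even though its $(p,q)$-entry is drawn from the kernel $K_{i_pi_q}$, since every entry is bounded by $\|K\|_{L^\infty}$; this gives $\|K_{i_1,\ldots,i_n}\|_{L^\infty}\le n^{n/2}\|K\|_{L^\infty}^n$, and the same multilinear expansion of the determinant that underlies Lemma~\ref{lem:Kn} produces the corresponding bounds on the one-dimensional sections $t_k\mapsto K_{i_1,\ldots,i_n}(\ldots)$, in the $C^{k-1,1}$-norm (resp.\ in the sup-norm over $\mathcal{E}_\rho(I_{i_k})$). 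Feeding these into Theorem~\ref{thm:quaderrn} --- which is where the hypothesis of a \emph{common} order $\nu$ for the family (\ref{eq:quadsys}) is used --- bounds each term of the inner sum by $\alpha_0\,n^{(n+2)/2}\beta_0^n$, with $\alpha_0=c_k\,2^k L^k\nu^{-k}$, $\beta_0=L\|K\|_k$ in the smooth case and $\alpha_0=4\rho^{-\nu}/(1-\rho^{-1})$, $\beta_0=L\|K\|_{L^\infty(\mathcal{E}_\rho)}$ in the analytic case, where $L=\max_i|I_i|$.

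Then I would sum over $n$. The inner sum ranges over $N^n$ multi-indices, so putting $\beta=N\beta_0$ yields
\[
|d_Q(z)-d(z)| \le \alpha_0\sum_{n=1}^\infty \frac{n^{(n+2)/2}}{n!}\,(|z|\beta)^n = \alpha_0\,\Phi\bigl(|z|\beta\bigr),
\]
with $\Phi$ the entire function of (\ref{eq:phi}), the series converging by Lemma~\ref{lem:phi}. Since $\Phi(|z|\beta)$ is bounded on every bounded set of $z$ and $\alpha_0=O(\nu^{-k})$ (resp.\ $O(\rho^{-\nu})$) is independent of $z$, both assertions follow.

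There is no real obstacle: the argument is structurally identical to that of Theorem~\ref{thm:nyerr}, and the care required is purely notational --- fixing the uniform norms $\|K\|_{L^\infty},\|K\|_k,\|K\|_{L^\infty(\mathcal{E}_\rho)}$ across kernels defined on distinct intervals, making precise what ``a common order $\nu$'' means for the rules (\ref{eq:quadsys}), and noting that Hadamard's inequality and the multilinear expansion behind Lemma~\ref{lem:Kn} are insensitive to the fact that the $(p,q)$-entry of $K_{i_1,\ldots,i_n}$ comes from $K_{i_pi_q}$ rather than from a single fixed $K$. The combinatorial factor $N^n$ coming from the number of blocks is harmless: it is absorbed into $\beta$ exactly as the interval length $(b-a)$ is in the single-operator case.
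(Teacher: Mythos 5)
Your proposal is correct and follows exactly the route the paper intends: it uses the multi-index series representations of $d(z)$ and $d_Q(z)$ derived in Section~\ref{sect:matrixkernels} and then repeats the proof of Theorem~\ref{thm:nyerr}, with the uniform norms over the $K_{ij}$, a mixed-kernel version of Lemma~\ref{lem:Kn} via Hadamard's inequality, the coordinate-wise quadrature bound of Theorem~\ref{thm:quaderrn} adapted to the product rules $Q_{i_1}\otimes\cdots\otimes Q_{i_n}$, and the factor $N^n$ absorbed into the argument of $\Phi$. The paper itself leaves these details as a ``straightforward generalization,'' and your write-up supplies precisely them, so no further comparison is needed.
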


\subsection{Examples from random matrix theory}

Here, we apply the Nyström-type method (\ref{eq:nysysdef}) to two $2\times 2$-systems of integral operators that have recently been studied in random matrix theory.

\begin{figure}[tbp]
\begin{center}
\begin{minipage}{0.65\textwidth}
{\includegraphics[width=\textwidth]{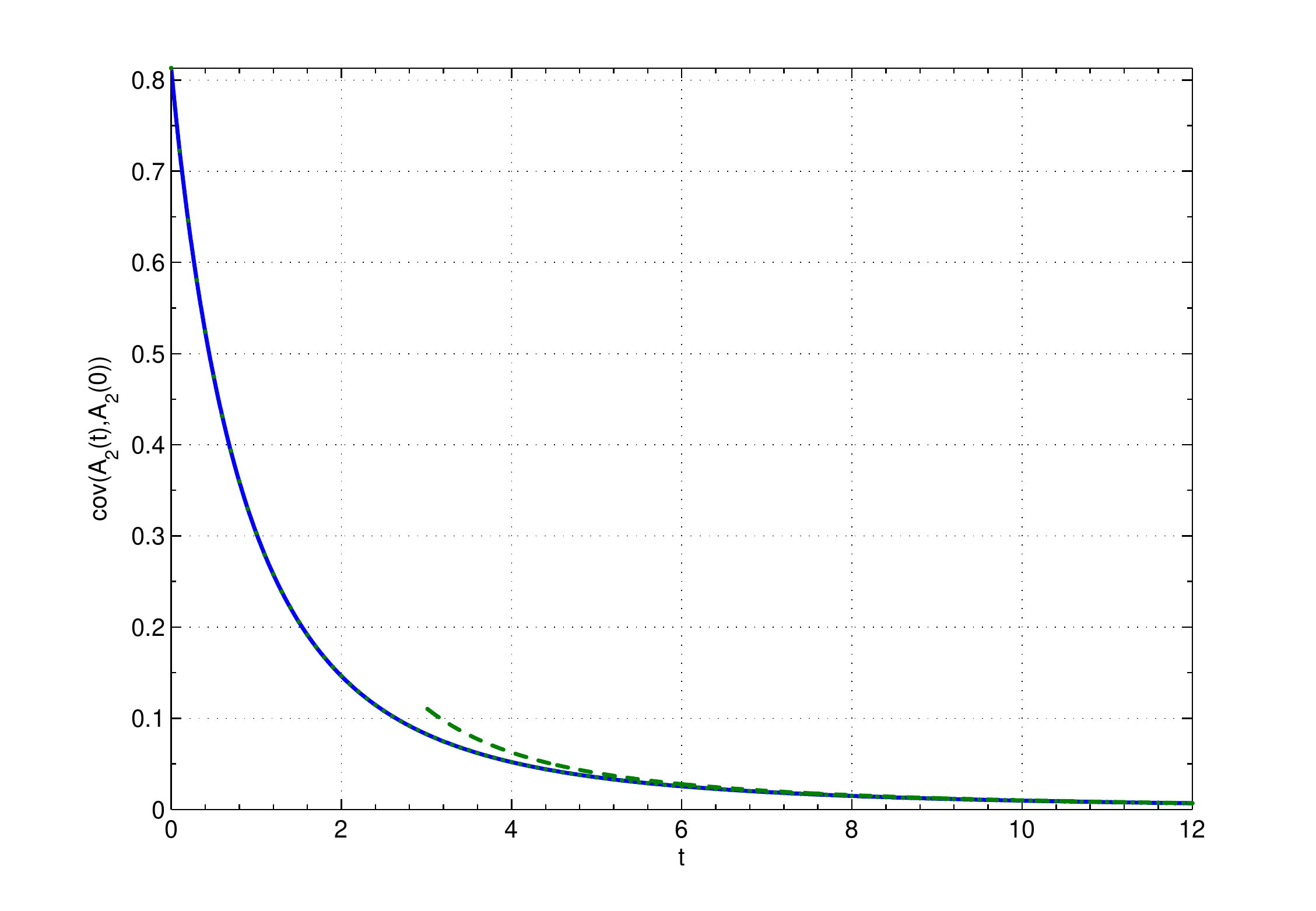}}
\end{minipage}
\end{center}\vspace*{-0.0625cm}
\caption{Values of the two-point correlation function $\cov(\mathcal{A}_2(t),\mathcal{A}_2(0))$ of the Airy process $\mathcal{A}_2(t)$ (solid line). The dashed line shows the
first term of the asymptotic expansion $\cov(\mathcal{A}_2(t),\mathcal{A}_2(0)) \sim t^{-2}$ as $t \to \infty$.}
\label{fig:Airy2}
\end{figure}

\smallskip
\paragraph{Two-point correlation of the Airy process}

The Airy process $\mathcal{A}_2(t)$ describes, in a properly rescaled limit of infinite dimension,
the maximum eigenvalue of Hermitian matrix ensemble whose entries develop according to the Ornstein--Uhlenbeck process. This stationary stochastic process was introduced by \citeasnoun{MR1933446} and further studied
by \citeasnoun{MR2018275}. These authors have shown that the joint probability function is given by a Fredholm determinant; namely
\begin{equation}\label{eq:jointprobairy2}
\mathbb{P}(\mathcal{A}_2(t) \leq s_1, \mathcal{A}_2(0) \leq s_2) = \det\left(I -
\begin{pmatrix}
A_0 & A_t \\*[1mm]
A_{-t} & A_0
\end{pmatrix}{\projected{L^2(s_1,\infty)\oplus L^2(s_2,\infty)}}\right)
\end{equation}
with integral operators $A_t$ that are induced by the kernel functions
\begin{equation}\label{eq:airy2kernel}
K_t(x,y) = \begin{cases}
\phantom{-}\displaystyle\int_0^\infty e^{-\xi t} \Ai(x+\xi)\Ai(y+\xi)\,d\xi, &\qquad t> 0, \\*[4mm]
-\displaystyle\int_{-\infty}^0 e^{-\xi t} \Ai(x+\xi)\Ai(y+\xi)\,d\xi, &\qquad \text{otherwise}.
\end{cases}
\end{equation}
Of particular interest is the two-point correlation function
\begin{align}\label{eq:twopintairy2}
\cov(\mathcal{A}_2(t),\mathcal{A}_2(0)) &= \mathbb{E}(\mathcal{A}_2(t)\mathcal{A}_2(0))- \mathbb{E}(\mathcal{A}_2(t)) \mathbb{E}(\mathcal{A}_2(0))\\*[2mm]
&= \int_{\R^2} s_1 s_2 \frac{\partial^2\mathbb{P}(\mathcal{A}_2(t) \leq s_1, \mathcal{A}_2(0) \leq s_2)}{\partial s_1 \partial s_2} \,ds_1ds_2 -c_1^2,\notag
\end{align}
where $c_1$ denotes the expectation value of the Tracy--Widom distribution (\ref{eq:F2}).
We have calculated
 this correlation function for $0 \leq t \leq 100$ in steps of $0.1$ to an absolute error of $\pm 10^{-10}$, see Figure~\ref{fig:Airy2}.\footnote{A table can be obtained from the author upon request.
 \citeasnoun[Fig.~2]{Sasa05}
 shows a plot (which differs by a scaling factor of two in both the function value and the time $t$) of the
  closely related function
 \[
g_2(t) = \sqrt{\var(\mathcal{A}_2(t)-\mathcal{A}_2(0))/2} = \sqrt{\var(\mathcal{A}_2(0)) - \cov(\mathcal{A}_2(t),\mathcal{A}_2(0))}
 \] ---without, however, commenting on either the numerical procedure used
 or on the accuracy obtained.}
Here are some details about the numerical procedure:

\smallskip
\begin{itemize}
\item Infinite intervals of integration, such as in the definition (\ref{eq:airy2kernel}) of the kernels  or for the domain of the integral operators (\ref{eq:jointprobairy2}) themselves,
are handled by a transformation to the finite interval $[0,1]$ as in Section~\ref{sect:airy}.\\*[-3mm]
\item The kernels (\ref{eq:airy2kernel}) are evaluated, after transformation, by a Gauss--Legendre quadrature.\\*[-3mm]
\item The joint probability distribution (\ref{eq:jointprobairy2}) is then evaluated, after transformation, by the Nyström-type method of this section, based on Gauss--Legendre quadrature.\\*[-3mm]
\item To avoid numerical differentiation, the expectation values defining the two-point correlation (\ref{eq:twopintairy2}) are evaluated by truncation of the integrals, partial integration, and using
a Gauss--Legendre quadrature once more.
\end{itemize}

\smallskip
\noindent
Because of analyticity, the convergence is always exponential.
With parameters carefully (i.e., adaptively) adjusted to deliver an absolute error of $\pm 10^{-10}$, the evaluation of the two-point correlation takes, for a single time $t$
and using a 2~GHz PC, about 20 minutes on average. The results were
cross-checked, for small $t$, with the asymptotic expansion \cite{MR1933446,Hagg07}
\begin{multline*}
\cov(\mathcal{A}_2(t),\mathcal{A}_2(0)) = \var(\mathcal{A}_2(0)) - \tfrac{1}{2}\var(\mathcal{A}_2(t)-\mathcal{A}_2(0))\\*[1mm]
 = \var(\mathcal{A}_2(0))\,  -\,t \,+ \,  O(t^2)\qquad (t\to0),
\end{multline*}
and, for large $t$, with the asymptotic expansion\footnote{\citeasnoun{MR2150191} have derived this asymptotic expansion from the masterfully obtained result that
$G(t,x,y) = \log\mathbb{P}(\mathcal{A}_2(t)\leq x, \mathcal{A}_2(0) \leq y)$ satisfies the following nonlinear 3rd order PDE with certain (asymptotic) boundary conditions:
\begin{multline*}
t \frac{\partial}{\partial t}\left(\frac{\partial^2}{\partial x^2}- \frac{\partial^2}{\partial y^2}\right) G =
\frac{\partial^3 G}{\partial x^2\partial y}\left(2\frac{\partial^2 G}{\partial y^2}+ \frac{\partial^2 G}{\partial x\partial y}-\frac{\partial^2 G}{\partial x^2}+x-y-t^2\right)\\*[2mm]
-\frac{\partial^3 G}{\partial y^2\partial x}\left(2\frac{\partial^2 G}{\partial x^2}+ \frac{\partial^2 G}{\partial x\partial y}-\frac{\partial^2 G}{\partial y^2}-x+y-t^2\right)
+\left(\frac{\partial^3 G}{\partial x^3}\frac{\partial}{\partial y}-\frac{\partial^3 G}{\partial y^3}\frac{\partial}{\partial x}\right)
\left(\frac{\partial}{\partial x}+\frac{\partial}{\partial y}\right) G.
\end{multline*}
The reader should contemplate a numerical calculation of the two-point correlation based on this PDE, rather than directly treating the Fredholm determinant as suggested by us.}
 \cite{MR2054175,MR2150191}
\[
\cov(\mathcal{A}_2(t),\mathcal{A}_2(0)) = t^{-2} +  c t^{-4} + O(t^{-6})\qquad (t\to\infty),
\]
where the constant $c=-3.542\cdots$ can explicitly be expressed in terms of the Hastings--McLeod solution (\ref{eq:hastings}) of Painlevé II.

\begin{figure}[tbp]
\begin{center}
\begin{minipage}{0.65\textwidth}
{\includegraphics[width=\textwidth]{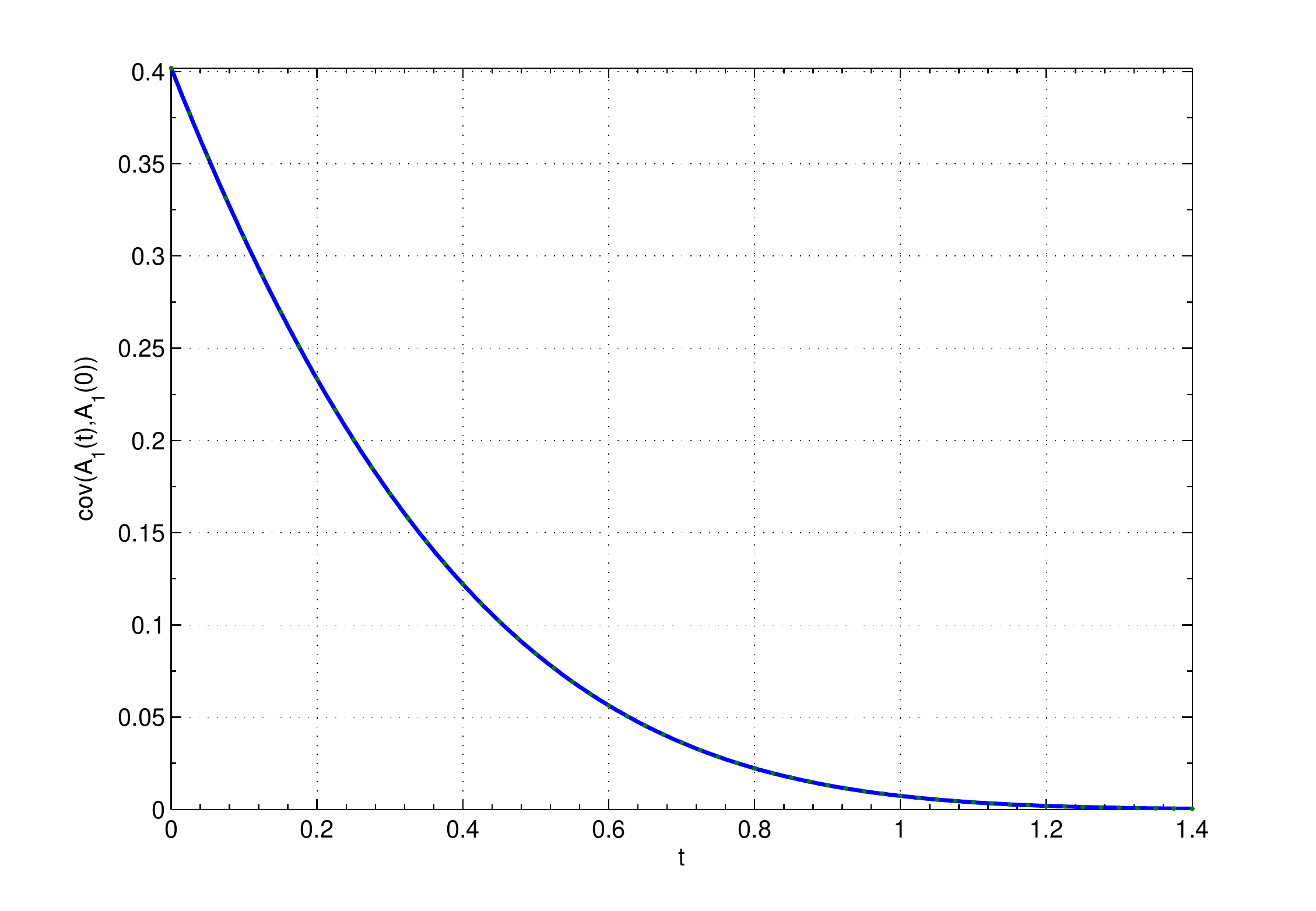}}
\end{minipage}
\end{center}\vspace*{-0.0625cm}
\caption{Values of the two-point correlation function $\cov(\mathcal{A}_1(t),\mathcal{A}_1(0))$ of the $\text{Airy}_1$ process $\mathcal{A}_1(t)$.}
\label{fig:Airy1}
\end{figure}

\medskip
\paragraph{Two-point correlation of the $\text{Airy}_1$ process} \citeasnoun{Sasa05} and \citeasnoun{MR2363389} have introduced the $\text{Airy}_1$ process $\mathcal{A}_1(t)$  for which,
once again,
the joint probability distribution can be given in terms of a Fredholm determinant; namely
\[
\mathbb{P}(\mathcal{A}_1(t) \leq s_1, \mathcal{A}_1(0) \leq s_2) = \det\left(I -
\begin{pmatrix}
A_0 & A_t \\*[1mm]
A_{-t} & A_0
\end{pmatrix}{\projected{L^2(s_1,\infty)\oplus L^2(s_2,\infty)}}\right)
\]
with integral operators $A_t$ that are now induced by the kernel functions
\[
K_t(x,y) = \begin{cases}
\Ai(x+y+t^2) e^{t(x+y)+2t^3/3} - \displaystyle\frac{\exp(-(x-y)^2/(4t))}{\sqrt{4\pi t}}, & \;t> 0, \\*[4mm]
\Ai(x+y+t^2) e^{t(x+y)+2t^3/3}, &\;\text{otherwise}.
\end{cases}
\]
By basically employing  the same numerical procedure as for the Airy process, we have succeeded in calculating
 the two-point correlation function $\cov(\mathcal{A}_1(t),\mathcal{A}_1(0))$ for $0 \leq t \leq 2.5$ in steps of $0.025$ to an absolute error of $\pm 10^{-10}$, see
 Figure~\ref{fig:Airy1}.
\footnote{A table can be obtained from the author upon request.
 \citeasnoun[Fig.~2]{Sasa05}
 shows a plot (which differs by a scaling factor of two in both the function value and the time $t$) of the
 closely related function
 \[
g_1(t) = \sqrt{\var(\mathcal{A}_1(t)-\mathcal{A}_1(0))/2} = \sqrt{\var(\mathcal{A}_1(0)) - \cov(\mathcal{A}_1(t),\mathcal{A}_1(0))}
 \] ---without, however, commenting on either the numerical procedure used
 or on the accuracy obtained.}
 For a single time $t$ the evaluation takes about 5 minutes on average (using a 2~GHz PC). This numerical result has been used by \citeasnoun{0806:3410} as a strong evidence
that the $\text{Airy}_1$ process is, unlike previously conjectured, \emph{not} the limit of the largest eigenvalue in GOE matrix diffusion.

\renewcommand{\thesection}{A}
\section{Appendices}
\subsection{Quadrature Rules}\label{app:quad}

For the ease of reference, we collect in this appendix some classical facts about quadrature rules in one and more dimensions.

\smallskip

\paragraph{Quadrature rules in one dimension}
We consider quadrature rules of the form
\begin{equation}\label{eq:quad1}
Q(f) = \sum_{j=1}^m w_j f(x_j)
\end{equation}
which are meant to approximate $\int_a^b f(x)\,dx$ for continuous functions $f$ on some finite interval $[a,b]\subset \R$.
We define the norm of a quadrature rule by
\[
\|Q\| = \sum_{j=1}^m |w_j|
\]
Convergence of a sequence of quadrature rules is characterized by the following theorem of Pólya \cite[p.~130]{MR760629}.

\smallskip

\begin{theorem}\label{thm:polya}
A sequence $Q_n$ of quadrature rules converges for continuous functions,
\[
\lim_{n\to\infty} Q_n(f) = \int_a^b f(x)\,dx \qquad (f \in C[a,b]),
\]
if and only if the sequence $\|Q_n\|$ of norms is bounded by some stability constant $\Lambda$ and if
\begin{equation}\label{eq:monomial}
\lim_{n\to\infty} Q_n(x^k) = \int_a^b x^k\,dx \qquad (k =0,1,2,\ldots).
\end{equation}
If the weights are all positive, then (\ref{eq:monomial}) already implies the boundedness of  $\|Q_n\|=Q_n(1)$.
\end{theorem}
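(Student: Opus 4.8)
The plan is to treat each $Q_n$ as a bounded linear functional on the Banach space $C[a,b]$ with the supremum norm, and to marry the Banach--Steinhaus theorem with the Weierstrass approximation theorem. The first step I would record is the elementary identification of the functional norm: viewed on $(C[a,b],\|\cdot\|_\infty)$, each $Q_n$ has operator norm exactly $\|Q_n\|=\sum_{j=1}^m |w_j|$. The estimate $|Q_n(f)|\le \|f\|_\infty\sum_j|w_j|$ is immediate, and equality is attained by taking a continuous $f$ with $\|f\|_\infty\le 1$ interpolating the values $\operatorname{sgn}(w_j)$ at the (distinct) nodes $x_j$, e.g.\ a piecewise linear function. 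This makes the notation $\|Q_n\|$ in the statement literally the dual norm.

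For the necessity direction, assume $Q_n(f)\to\int_a^b f$ for every $f\in C[a,b]$. Testing against $f(x)=x^k$ gives the monomial condition (\ref{eq:monomial}) at once. Moreover, a pointwise convergent sequence of bounded linear functionals on a Banach space is uniformly bounded by the uniform boundedness principle, so $\sup_n\|Q_n\|=\sup_n\sum_j|w_j|=:\Lambda<\infty$, which is the asserted stability bound.

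For the sufficiency direction, assume $\|Q_n\|\le\Lambda$ and that (\ref{eq:monomial}) holds; by linearity this extends to $Q_n(p)\to\int_a^b p$ for every polynomial $p$. Given $f\in C[a,b]$ and $\epsilon>0$, Weierstrass' theorem supplies a polynomial $p$ with $\|f-p\|_\infty<\epsilon$, and then
\[
\Bigl|Q_n(f)-\int_a^b f\Bigr|\le \|Q_n\|\,\|f-p\|_\infty+\Bigl|Q_n(p)-\int_a^b p\Bigr|+(b-a)\|f-p\|_\infty\le (\Lambda+b-a)\epsilon+\Bigl|Q_n(p)-\int_a^b p\Bigr|.
\]
Letting $n\to\infty$ annihilates the last term, so $\limsup_n|Q_n(f)-\int_a^b f|\le(\Lambda+b-a)\epsilon$; since $\epsilon$ was arbitrary, convergence follows.

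Finally, if all weights are positive then $\|Q_n\|=\sum_j w_j=Q_n(1)$, so assuming (\ref{eq:monomial}) merely for $k=0$ forces $Q_n(1)\to b-a$; a convergent sequence is bounded, so the stability bound comes for free. The only point that asks for a little care rather than being purely routine is the norm identification together with the appeal to Banach--Steinhaus in the necessity part; everything else is a standard three-$\epsilon$ argument. If one prefers to avoid the uniform boundedness principle, the same conclusion can be reached by an explicit gliding-hump construction producing a single $f\in C[a,b]$ on which $Q_n(f)$ fails to converge whenever $\|Q_n\|$ is unbounded.
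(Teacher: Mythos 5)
Your proof is correct. Note, however, that the paper does not prove this statement at all: it is quoted as a classical theorem of P\'olya with a reference to the literature (Davis--Rabinowitz), so there is no in-paper argument to compare against. What you have written is the standard functional-analytic proof: the identification of $\|Q_n\|=\sum_j|w_j|$ with the dual norm of $Q_n$ on $\bigl(C[a,b],\|\cdot\|_\infty\bigr)$ (the norming function being a piecewise linear interpolant of $\operatorname{sgn}(w_j)$ at the, tacitly distinct, nodes), Banach--Steinhaus for the necessity of the stability bound, Weierstrass plus the three-$\epsilon$ estimate
\[
\Bigl|Q_n(f)-\int_a^b f\Bigr|\le \|Q_n\|\,\|f-p\|_\infty+\Bigl|Q_n(p)-\int_a^b p\Bigr|+(b-a)\|f-p\|_\infty
\]
for sufficiency, and $\|Q_n\|=Q_n(1)\to b-a$ in the positive-weight case. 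All steps check out, including the remark that a gliding-hump (condensation of singularities) construction can replace the uniform boundedness principle --- which is essentially P\'olya's original, more elementary route. The only cosmetic point is notational: the number of nodes and the weights depend on $n$, so strictly one should write $m_n$, $w_j^{(n)}$, $x_j^{(n)}$; this does not affect the argument.
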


\smallskip

A quadrature rule $Q$ is of order $\nu\geq 1$, if it is exact for all polynomials of degree at most $\nu-1$. Using results from the theory polynomial best approximation
one can prove
quite strong error estimates \cite[§4.8]{MR760629}.

\smallskip

\begin{theorem}\label{thm:quaderr}
If $f \in C^{k-1,1}[a,b]$, then for each quadrature rule $Q$ of order $\nu\geq k$ with positive weights
there holds the error estimate
\[
\left| Q(f) - \int_a^b f(x)\,dx \right| \leq c_k\, (b-a)^{k+1} \nu^{-k} \|f^{(k)}\|_{\scriptscriptstyle L^\infty(a,b)}\,,
\]
with a constant\footnote{Taking Jackson's inequality as given in \citeasnoun[p.~147]{MR1656150}, $c_k = 2 (\pi e/4)^k/\sqrt{2\pi k}$ will do the job.} $c_k$ depending only on $k$.

If $f$ is bounded analytic in the ellipse $\mathcal{E}_\rho$ with foci at $a$, $b$ and semiaxes of lengths $s > \sigma$ such that
\[
\rho = \sqrt\frac{s+\sigma}{s-\sigma}\,,
\]
then for each quadrature rule $Q$ of order $\nu$ with positive weights there holds the error estimate
\[
\left| Q(f) - \int_a^b f(x)\,dx \right| \leq \frac{4(b-a)\rho^{-\nu} }{1-\rho^{-1}}  \|f\|_{\scriptscriptstyle L^\infty(\mathcal{E}_\rho)}.
\]

\end{theorem}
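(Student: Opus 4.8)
The plan is to reduce both estimates to a single observation: a positive-weight quadrature rule of order $\nu$ integrates every polynomial of degree $\le\nu-1$ exactly, so its quadrature error on a general $f$ is controlled by the error $E_{\nu-1}(f)$ of best uniform polynomial approximation of degree $\nu-1$ on $[a,b]$. The two displayed bounds then follow by inserting the two classical estimates for $E_{\nu-1}(f)$ --- Jackson's inequality in the finitely smooth case, and the Bernstein--Chebyshev bound in the analytic case.

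First I would establish the reduction. Since the weights are positive, $\|Q\|=\sum_j w_j=Q(1)=b-a$, so both $f\mapsto Q(f)$ and $f\mapsto\int_a^b f$ are functionals of norm $b-a$ on $C[a,b]$. Picking a polynomial $p^\ast$ of degree $\le\nu-1$ of best uniform approximation to $f$ and using the exactness $Q(p^\ast)=\int_a^b p^\ast$, one gets
\[
\left|Q(f)-\int_a^b f\right| = \left|Q(f-p^\ast)-\int_a^b(f-p^\ast)\right| \le 2(b-a)\,E_{\nu-1}(f),
\]
where $E_{\nu-1}(f)=\|f-p^\ast\|_{L^\infty(a,b)}$. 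Everything else amounts to feeding the two approximation-theory inputs into this inequality.

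For the first part I would note that $f\in C^{k-1,1}[a,b]$ means $f^{(k-1)}$ is Lipschitz, hence $f^{(k)}\in L^\infty(a,b)$; transplanting to $[-1,1]$ by the affine substitution $x\mapsto(2x-a-b)/(b-a)$ (which scales the $k$-th derivative by $((b-a)/2)^k$) and applying Jackson's inequality in the form quoted in the footnote yields $E_{\nu-1}(f)\le\tfrac12 c_k(b-a)^k\nu^{-k}\|f^{(k)}\|_{L^\infty(a,b)}$ with $c_k=2(\pi e/4)^k/\sqrt{2\pi k}$; combined with the reduction this is exactly the claimed bound. For the second part, set $M=\|f\|_{L^\infty(\mathcal{E}_\rho)}$, expand $f$ in the Chebyshev series (pulled back to $[a,b]$) $f=\sum_{n\ge0}a_n T_n$, and use the contour-integral representation of the coefficients over the boundary of $\mathcal{E}_\rho$ together with $|T_n|\le\tfrac12(\rho^n+\rho^{-n})$ there to obtain $|a_n|\le2M\rho^{-n}$ for $n\ge1$ (with a passage $\rho'\uparrow\rho$ if one only assumes boundedness, not analyticity, on the boundary). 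Then the degree-$(\nu-1)$ truncation of the series already furnishes
\[
E_{\nu-1}(f)\le\sum_{n=\nu}^\infty|a_n|\le2M\sum_{n=\nu}^\infty\rho^{-n}=\frac{2M\rho^{-\nu}}{1-\rho^{-1}},
\]
and inserting this into the reduction gives the asserted $4(b-a)\rho^{-\nu}/(1-\rho^{-1})\cdot\|f\|_{L^\infty(\mathcal{E}_\rho)}$.

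There is no real conceptual obstacle here: the argument is three lines of elementary functional analysis plus two invocations of classical material from \citeasnoun{MR760629} and \citeasnoun{MR1656150}. The only thing requiring care is the bookkeeping of constants --- in the first part one must track the $(b-a)/2$ scaling factors so that the final constant coincides with the $c_k$ recorded in the footnote, and in the second part one must keep the Chebyshev coefficient estimate in the (non-sharp) form $|a_n|\le2M\rho^{-n}$ so that the front factor emerges as $4$ rather than $2$.
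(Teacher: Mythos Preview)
Your proposal is correct and follows precisely the route the paper indicates. The paper does not actually spell out a proof of this theorem; it merely states it as a classical fact, prefacing it with ``Using results from the theory polynomial best approximation one can prove quite strong error estimates \cite[\S4.8]{MR760629}'' and recording in the footnote the Jackson constant from \cite[p.~147]{MR1656150}. Your argument --- the reduction $|Q(f)-\int f|\le 2(b-a)E_{\nu-1}(f)$ via positivity of weights and exactness on polynomials of degree $\le\nu-1$, followed by Jackson's inequality in the first case and the Bernstein--Chebyshev coefficient bound in the second --- is exactly what those references contain and what the paper intends.
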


\smallskip

\paragraph{Quadrature rules in two and more dimensions} For the $n$-dimensional integral
\[
\int_{[a,b]^n} f(t_1,\ldots,t_n) \,dt_1 \cdots\,dt_n
\]
we consider the product quadrature rule $Q^n$ that is induced by an one dimensional quadrature rule $Q$ of the form (\ref{eq:quad1}), namely
\begin{equation}\label{eq:productrule}
Q^n(f) = \sum_{j_1,\ldots,j_n=1}^m w_{j_1}\cdots w_{j_n} \,f(x_{j_1},\ldots,x_{j_n}).
\end{equation}
We introduce some further notation for two classes of functions $f$. First, for $f \in C^{k-1,1}([a,b]^n)$, we define the seminorm
\begin{equation}\label{eq:seminormk}
|f|_k = \sum_{i=1}^n \|\partial_i^k f\|_{\scriptscriptstyle L^\infty((a,b)^n)}.
\end{equation}
Second, if $f \in C([a,b]^n)$ is sectional analytic---that is, analytic with respect to each variable $t_i$ while the other variables are fixed in $[a,b]$---in the ellipse $\mathcal{E}_\rho$
(defined in Theorem~\ref{thm:quaderr}), and if $f$ is uniformly bounded there, we call $f$ to be of class $\mathcal{C}_\rho$ with norm
\begin{equation}\label{eq:normcrho}
\|f\|_{\scriptscriptstyle \mathcal{C}_\rho} = \sum_{i=1}^n \;\max_{(t_1,\ldots,t_{i-1},t_{i+1},\ldots,t_n)\in[a,b]^{n-1}}
\|f(t_1,\ldots,t_{i-1},\cdot\,,t_{i+1},\ldots,t_n)\|_{\scriptscriptstyle L^\infty(\mathcal{E}_\rho)}.
\end{equation}
By a straightforward reduction argument \cite[p.~361]{MR760629} to the quadrature errors of the one-dimensional coordinate sections of $f$, Theorems~\ref{thm:polya} and~\ref{thm:quaderr} can
now be generalized to $n$ dimensions.

\smallskip
\begin{theorem}\label{thm:quaderrn} If a sequence of quadrature rules converges for continuous functions, then the same holds for the induced $n$-dimensional product rules.

If $f \in C^{k-1,1}([a,b]^n)$, then for each one-dimensional quadrature rule $Q$ of order $\nu\geq k$ with positive weights
there holds the error estimate
\[
\left| Q^n(f) - \int_{[a,b]^n} f(t_1,\ldots,t_n) \,dt_1 \cdots\,dt_n \right| \leq c_k\, (b-a)^{n+k}\, \nu^{-k} |f|_k\,,
\]
with the same constant $c_k$ depending only on $k$ as in Theorem~\ref{thm:quaderr}.

If $f\in C([a,b]^n)$ is of class $\mathcal{C}_\rho$, then for each one-dimensional quadrature rule $Q$ of order $\nu$ with positive weights there holds the error estimate
\[
\left| Q^n(f) - \int_{[a,b]^n} f(t_1,\ldots,t_n) \,dt_1 \cdots\,dt_n \right| \leq \frac{4(b-a)^{n} \rho^{-\nu}}{1-\rho^{-1}}  \|f\|_{\scriptscriptstyle \mathcal{C}_\rho}.
\]
\end{theorem}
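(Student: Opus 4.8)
The plan is to reduce the $n$-dimensional statements to the one-dimensional results of Theorems~\ref{thm:polya} and~\ref{thm:quaderr} by a telescoping identity in the coordinate directions. Write $Q_i$ for the one-dimensional rule $Q$ acting on the variable $t_i$ alone and $L_i$ for the functional $f\mapsto \int_a^b f\,dt_i$ acting on the variable $t_i$ alone, so that $Q^n = Q_1 Q_2\cdots Q_n$ and $\int_{[a,b]^n}\!\cdot\; = L_1 L_2 \cdots L_n$ as operators on $C([a,b]^n)$, all of these one-variable operators commuting. Then
\[
Q^n(f) - \int_{[a,b]^n} f\,dt_1\cdots dt_n = \sum_{i=1}^n Q_1\cdots Q_{i-1}\,(Q_i - L_i)\,L_{i+1}\cdots L_n\,f .
\]
For the $i$-th summand, first apply $L_{i+1}\cdots L_n$ and then $Q_1\cdots Q_{i-1}$; what remains is $(Q_i-L_i)$ applied to the single-variable function
\[
g_i(t) = \sum_{j_1,\dots,j_{i-1}} w_{j_1}\cdots w_{j_{i-1}} \int_{[a,b]^{n-i}} f(x_{j_1},\dots,x_{j_{i-1}},t,s_{i+1},\dots,s_n)\,ds_{i+1}\cdots ds_n .
\]

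Next I would record the two properties of $g_i$ that matter. Since the weights are positive and $Q$ is exact on constants, $\sum_j w_j = b-a$; hence $|g_i|$, and similarly its $t$-derivatives, pick up only the harmless factor $(\sum_j w_j)^{i-1}(b-a)^{n-i} = (b-a)^{n-1}$ in front of the corresponding sup-norm of $f$. Moreover $g_i$ is obtained from $f$ by real evaluations and real integrations in the other variables, so it inherits analyticity in $\mathcal{E}_\rho$ together with the bound $\|g_i\|_{L^\infty(\mathcal{E}_\rho)} \le (b-a)^{n-1}\max\|f(\dots,\cdot\,,\dots)\|_{L^\infty(\mathcal{E}_\rho)}$, the maximum being over the remaining variables in $[a,b]^{n-1}$, which is exactly the $i$-th summand in (\ref{eq:normcrho}).

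Now the one-dimensional estimates go in. In the $C^{k-1,1}$ case, $\|g_i^{(k)}\|_{L^\infty(a,b)} \le (b-a)^{n-1}\|\partial_i^k f\|_{L^\infty((a,b)^n)}$, so Theorem~\ref{thm:quaderr} gives $|(Q_i-L_i)g_i| \le c_k (b-a)^{k+1}\nu^{-k}(b-a)^{n-1}\|\partial_i^k f\|_{L^\infty} = c_k (b-a)^{n+k}\nu^{-k}\|\partial_i^k f\|_{L^\infty}$; summing over $i$ and recalling the seminorm (\ref{eq:seminormk}) yields the first estimate. In the analytic case, the second part of Theorem~\ref{thm:quaderr} applied to $g_i$ gives $|(Q_i-L_i)g_i| \le \frac{4(b-a)\rho^{-\nu}}{1-\rho^{-1}}(b-a)^{n-1}\max\|f(\dots,\cdot\,,\dots)\|_{L^\infty(\mathcal{E}_\rho)}$; summing over $i$ and recalling (\ref{eq:normcrho}) yields the second estimate.

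Finally, for the convergence assertion I would run the same telescoping with the members $Q_m$ of the convergent family in place of $Q$: the outer factors are uniformly bounded by $\Lambda_1^{\,n-1}$ with $\Lambda_1 = \max(\Lambda,b-a)$ (as in the proof of Theorem~\ref{thm:nyconv}), the section $g_i^{(m)}$ converges \emph{uniformly} on $[a,b]$ to $L_1\cdots L_{i-1}L_{i+1}\cdots L_n f$ by the $(i-1)$-dimensional case applied to the continuous function obtained after integrating out the last $n-i$ variables (uniformity in the parameter $t$ following because the relevant sections form an equicontinuous, uniformly bounded family in $C([a,b]^{i-1})$), and then $(Q_i^{(m)}-L_i)g_i^{(m)}\to 0$ by splitting off this uniform limit and invoking Theorem~\ref{thm:polya}. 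A shorter alternative sidesteps the induction: $\|Q_m^n\| = \|Q_m\|^n \le \Lambda^n$ is uniformly bounded, one has $Q_m^n(t_1^{k_1}\cdots t_n^{k_n}) = \prod_i Q_m(x^{k_i}) \to \prod_i \int_a^b x^{k_i}\,dx$ for every monomial, and since the polynomials are dense in $C([a,b]^n)$ by Stone--Weierstrass, an $\varepsilon/3$ argument closes the proof. The computations are entirely routine; the only point needing care is the bookkeeping in the telescoping — checking that each ``outer'' factor contributes exactly $b-a$ (respectively the stability bound $\Lambda$) and nothing worse — and, along the telescoping route for convergence, the uniform-in-parameter convergence of the inner sections.
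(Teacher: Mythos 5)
Your proposal is correct and is essentially the paper's own argument: the paper proves this theorem by simply invoking the ``straightforward reduction argument'' (Davis--Rabinowitz) to the quadrature errors of the one-dimensional coordinate sections of $f$, which is exactly the telescoping decomposition you spell out, with each outer factor contributing $b-a$ (resp.\ the stability bound) and Theorem~\ref{thm:quaderr} (resp.\ Theorem~\ref{thm:polya}) applied to the resulting one-variable sections. Your Stone--Weierstrass shortcut for the convergence assertion is a harmless variant of the same reduction.
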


\smallskip

\paragraph{Notes on Gauss--Legendre and Curtis--Clenshaw quadrature}

Arguably, the most interesting families of quadrature rules, with \emph{positive} weights, are the Clenshaw--Curtis and Gauss-Legendre rules.
With $m$ points, the first is of order $\nu=m$, the second of order $\nu=2m$. Thus, Theorems~\ref{thm:polya} and~\ref{thm:quaderr} apply.
The cost of computing the weights and points of Clenshaw--Curtis is $O(m\log m)$ using FFT, that of Gauss--Legendre is $O(m^2)$ using the Golub--Welsh algorithm;
for details see \cite{MR2214855} and \cite{Tref08}. The latter paper studies in depth the reasons why the Clenshaw--Curtis rule, despite having
only half the order, performs essentially as well as Gauss--Legendre for most integrands. To facilitate reproducibility  we offer
the Matlab code (which is just a minor variation of the code given in the papers mentioned above) that has been used in our numerical experiments:

\medskip\begin{quote}
\begin{verbatim}
function [w,c] = ClenshawCurtis(a,b,m)
m = m-1;
c = cos((0:m)*pi/m);
M = [1:2:m-1]'; l = length(M); n = m-l;
v0 = [2./M./(M-2); 1/M(end); zeros(n,1)];
v2 = -v0(1:end-1)-v0(end:-1:2);
g0 = -ones(m,1); g0(1+l)=g0(1+l)+m; g0(1+n)=g0(1+n)+m;
g = g0/(m^2+mod(m,2));
w = ifft(v2+g); w(m+1) = w(1);
c = ((1-c)/2*a+(1+c)/2*b)';
w = ((b-a)*w/2)';
\end{verbatim}
\end{quote}\medskip

\noindent for Clenshaw--Curtis; and

\medskip\begin{quote}
\begin{verbatim}
function [w,c] = GaussLegendre(a,b,m)
k = 1:m-1; beta = k./sqrt((2*k-1).*(2*k+1));
T = diag(beta,-1) + diag(beta,1);
[V,L] = eig(T);
c = (diag(L)+1)/2; c = (1-c)*a+c*b;
w = (b-a)*V(1,:).^2;
\end{verbatim}
\end{quote}\medskip

\noindent  for Gauss--Legendre, respectively.
Note, however, that the code for Gauss--Legendre is, unfortunately, suboptimal in requiring $O(m^3)$ rather than $O(m^2)$ operations, since it establishes the full matrix $V$ of eigenvectors of
the Jacobi matrix $T$ instead of directly calculating just their first components $V(1,:)$  as in the fully fledged Golub--Welsh algorithm. Even then, there may well be more
accurate, and more efficient, alternatives of computing the points and weights of Gauss--Legendre quadrature, see the discussions in \citeasnoun[§2]{MR1808574} and \citeasnoun[§4]{MR1950519} and the literature cited therein.

\subsection{Determinantal bounds}

In Section~\ref{sect:quad}, for a continuous kernel $K \in C([a,b]^2)$ of an integral operator, we need some bounds on the derivatives of the induced $n$-dimensional function
\[
K_n(t_1,\ldots t_n) = \det\left(K(t_p,t_q)\right)_{p,q=1}^n.
\]
To this end, if $K \in C^{k-1,1}([a,b]^2)$ we define the norm
\begin{equation}\label{eq:Knormk}
\|K\|_k = \max_{i+j\leq k} \|\partial_1^i\partial_2^j K\|_{\scriptscriptstyle L^\infty}.
\end{equation}

\begin{lemma}\label{lem:Kn} If $K \in C([a,b]^2)$, then $K_n \in C([a,b]^n)$ with
\begin{equation}\label{eq:Knbound}
\|K_n\|_{\scriptscriptstyle L^\infty} \leq n^{n/2} \|K\|_{\scriptscriptstyle L^\infty}^n.
\end{equation}
If $K \in C^{k-1,1}([a,b]^2)$, then $K_n \in C^{k-1,1}([a,b]^n)$ with the seminorm (defined in (\ref{eq:seminormk}))
\begin{equation}\label{eq:Knboundk}
|K_n|_k \leq 2^k n^{(n+2)/2} \|K\|_k^n.
\end{equation}
If $K$ is bounded analytic on $\mathcal{E}_\rho \times \mathcal{E}_\rho$ (with the ellipse $\mathcal{E}_\rho$ defined in Theorem~\ref{thm:quaderr}), then $K_n$ is of class $\mathcal{C}_\rho$ (defined in (\ref{eq:normcrho}))
and satisfies
\begin{equation}\label{eq:KnboundCrho}
\|K_n\|_{\scriptscriptstyle \mathcal{C}_\rho} \leq n^{(n+2)/2} \|K\|_{\scriptscriptstyle L^\infty(\mathcal{E}_\rho \times \mathcal{E}_\rho)}^n.
\end{equation}
\end{lemma}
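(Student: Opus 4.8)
The plan is to reduce all three estimates to one classical fact, Hadamard's inequality $|\det M|\le\prod_{q=1}^{n}\bigl(\sum_{p=1}^{n}|M_{pq}|^{2}\bigr)^{1/2}$, applied to matrices whose entries are bounded by the appropriate norm of $K$. Throughout, write $M=\bigl(K(t_{p},t_{q})\bigr)_{p,q=1}^{n}$, so that $K_{n}=\det M$.

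First, for (\ref{eq:Knbound}): every entry of $M$ is bounded by $\|K\|_{\scriptscriptstyle L^\infty}$, so each of the $n$ columns of $M$ has Euclidean length at most $n^{1/2}\|K\|_{\scriptscriptstyle L^\infty}$, and Hadamard's inequality gives $|K_{n}(t_{1},\dots,t_{n})|\le n^{n/2}\|K\|_{\scriptscriptstyle L^\infty}^{n}$ at once; continuity of $K_{n}$ on $[a,b]^{n}$ is clear since $K_{n}$ is a fixed polynomial in the continuous entries $K(t_{p},t_{q})$.

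For (\ref{eq:Knboundk}), fix a coordinate index $i$ and freeze the other variables in $[a,b]$. The variable $t_{i}$ enters $M$ only through row $i$ (via the first slot of $K$) and through column $i$ (via the second slot), these overlapping solely in the diagonal entry $K(t_{i},t_{i})$. Expanding $\det M=\sum_{\sigma}\pm\prod_{p}M_{p\sigma(p)}$, in each permutation term at most two factors depend on $t_{i}$, namely $M_{i\sigma(i)}$ and $M_{\sigma^{-1}(i),i}$ (a single factor when $\sigma(i)=i$). Applying the Leibniz product rule, distributing the $k$ derivatives among these two factors, and re-summing over $\sigma$, I would identify the outcome as
\[
\partial_{t_{i}}^{k}K_{n}=\sum_{a+b=k}\binom{k}{a}\det M^{(a,b)},
\]
where $M^{(a,b)}$ arises from $M$ by replacing row $i$ with $\bigl((\partial_{1}^{a}K)(t_{i},t_{q})\bigr)_{q}$, column $i$ with $\bigl((\partial_{2}^{b}K)(t_{p},t_{i})\bigr)_{p}$, and the shared diagonal entry by $(\partial_{1}^{a}\partial_{2}^{b}K)(t_{i},t_{i})$. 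Since $a+b=k$, every entry of $M^{(a,b)}$ is bounded by $\|K\|_{k}$ as defined in (\ref{eq:Knormk}), so Hadamard gives $|\det M^{(a,b)}|\le n^{n/2}\|K\|_{k}^{n}$; as $\sum_{a+b=k}\binom{k}{a}=2^{k}$, this yields $\|\partial_{t_{i}}^{k}K_{n}\|_{\scriptscriptstyle L^\infty}\le 2^{k}n^{n/2}\|K\|_{k}^{n}$, and summing the $n$ such contributions in the seminorm (\ref{eq:seminormk}) produces the extra factor $n$, hence the claimed $2^{k}n^{(n+2)/2}$. That $K_{n}\in C^{k-1,1}([a,b]^{n})$ follows because $C^{k-1,1}$ of a compact box is closed under sums and products and each entry of $M$ inherits $C^{k-1,1}$-regularity from $K$. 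For (\ref{eq:KnboundCrho}) one argues similarly: since $[a,b]\subset\mathcal{E}_{\rho}$, freezing all coordinates but $t_{i}$ in $[a,b]$ keeps every argument pair of $K$ occurring in $M$ inside $\mathcal{E}_{\rho}\times\mathcal{E}_{\rho}$ as $t_{i}$ ranges over $\mathcal{E}_{\rho}$; thus $t_{i}\mapsto K_{n}$ extends analytically to $\mathcal{E}_{\rho}$, every entry of $M$ is bounded there by $\|K\|_{\scriptscriptstyle L^\infty(\mathcal{E}_{\rho}\times\mathcal{E}_{\rho})}$, Hadamard gives the uniform bound $n^{n/2}\|K\|_{\scriptscriptstyle L^\infty(\mathcal{E}_{\rho}\times\mathcal{E}_{\rho})}^{n}$, and taking the maximum over the frozen variables and summing over $i=1,\dots,n$ in the norm (\ref{eq:normcrho}) gives (\ref{eq:KnboundCrho}).

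The routine parts are the two Hadamard estimates and the summation over the $n$ coordinates, which is exactly what converts $n^{n/2}$ into $n^{(n+2)/2}$. The one step needing genuine care is the derivative computation in the second part: tracking the fact that $t_{i}$ sits simultaneously in a row and a column of $M$, and verifying that after the Leibniz expansion the terms reassemble into honest determinants $\det M^{(a,b)}$ (so that Hadamard, rather than the wasteful $n!$-term bound coming straight from the permutation sum, applies); handling the shared diagonal entry $K(t_{i},t_{i})$ correctly is the place where a sign or bookkeeping slip is most likely, so I would check the cases $\sigma(i)=i$ and $\sigma(i)\ne i$ separately, and confirm the identity for $k=1$ before trusting the general formula.
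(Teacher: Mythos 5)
Your proposal is correct and follows essentially the same route as the paper: the identity $\partial_{t_i}^k K_n=\sum_{a+b=k}\binom{k}{a}\det M^{(a,b)}$ that you obtain from the Leibniz expansion is exactly the paper's formula (derived there by giving row $i$ and column $i$ separate variables $t_i$, $s_i$, differentiating by multilinearity, and setting $s_i=t_i$), and the subsequent Hadamard bound, the factor $2^k$ from the binomial sum, and the factor $n$ from summing over coordinates in the seminorms (\ref{eq:seminormk}) and (\ref{eq:normcrho}) coincide with the paper's argument in all three cases.
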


\begin{proof}
Using the multilinearity of the determinant we have
\begin{multline*}
\frac{\partial^k}{\partial t_i^k}  \frac{\partial^l}{\partial s_j^l}
\begin{vmatrix}
K(t_1,s_1) & \cdots & K(t_1,s_j) & \cdots & K(t_1,s_n) \\*[1mm]
\vdots & & \vdots & & \vdots \\*[1mm]
K(t_i,s_1) & \cdots & K(t_i,s_j) & \cdots & K(t_i,s_n) \\*[1mm]
\vdots & & \vdots & & \vdots \\*[1mm]
K(t_n,s_1) & \cdots & K(t_n,s_j) & \cdots & K(t_n,s_n)
\end{vmatrix} \\*[2mm]
= \begin{vmatrix}
K(t_1,s_1) & \cdots & \partial_2^l K(t_1,s_j) & \cdots & K(t_1,s_n) \\*[1mm]
\vdots & & \vdots & & \vdots \\*[1mm]
\partial_1^k K(t_i,s_1) & \cdots & \partial_1^k\partial_2^lK(t_i,s_j) & \cdots & \partial_1^kK(t_i,s_n) \\*[1mm]
\vdots & & \vdots & & \vdots \\*[1mm]
K(t_n,s_1) & \cdots & \partial_2^lK(t_n,s_j) & \cdots & K(t_n,s_n)
\end{vmatrix},
\end{multline*}
which is, by Hadamard's inequality \cite[p.~469]{MR17382},\footnote{This inequality, discovered by \citename{Hada93} in 1893, was already of fundamental importance to Fredholm's original theory \cite[p.~41]{Fred00}.} bounded by
the expression (see also \citeasnoun[p.~262]{MR1892228})
\[
n^{n/2}\left( \max_{i+j\leq k+l} \|\partial_1^i\partial_2^j K\|_{\scriptscriptstyle L^\infty}\right)^n.
\]
Now, with
\[
\partial_j^k K_n(t_1,\ldots,t_n) = \sum_{l=0}^k \binom{k}{l} \frac{\partial^{k-l}}{\partial t_j^{k-l}}  \frac{\partial^l}{\partial s_j^l}
\begin{vmatrix}
K(t_1,s_1) & \cdots & K(t_1,s_n) \\*[1mm]
\vdots & & \vdots \\*[1mm]
K(t_n,s_1) & \cdots & K(t_n,s_n)
\end{vmatrix}_{s_1=t_1,\ldots,s_n=t_n}
\]
we thus get
\[
\|\partial_j^k K_n\|_{\scriptscriptstyle L^\infty} \leq \sum_{l=0}^k \binom{k}{l} n^{n/2} \left( \max_{i+j\leq k} \|\partial_1^i\partial_2^j K\|_{\scriptscriptstyle L^\infty}\right)^n
= 2^k n^{n/2} \left( \max_{i+j\leq k} \|\partial_1^i\partial_2^j K\|_{\scriptscriptstyle L^\infty}\right)^n.
\]
This proves the asserted bounds (\ref{eq:Knbound}) and (\ref{eq:Knboundk}) with $k=0$ and $k\geq 1$, respectively. The class $\mathcal{C}_\rho$ bound  (\ref{eq:KnboundCrho}) follows analogously to the case $k=0$.
\end{proof}

\subsection{Properties of a certain function used in Theorem~\protect\ref{thm:nyerr}}

The power series
\begin{equation}\label{eq:phi}
\Phi(z) = \sum_{n=1}^\infty \frac{n^{(n+2)/2}}{n!}\, z^n
\end{equation}
defines an entire function on $\C$ (as the following lemma readily implies).

\smallskip
\begin{lemma}\label{lem:phi}
Let $\Psi$ be the entire function given by the expression
\[
\Psi(z) = 1 +\frac{\sqrt{\pi}}{2} z \,e^{z^2/4} \left(1+\erf\left(\frac z2\right)\right).
\]
If $x > 0$, then the series $\Phi(x)$ is enclosed by:\footnote{Note the sharpness of this enclosure: $\sqrt{e/\pi}=0.93019\cdots$.}
\[
\sqrt{\frac{e}{\pi}}\, x \,\Psi(x\sqrt{2e}) \leq \Phi(x) \leq x\, \Psi(x\sqrt{2e}).
\]
\end{lemma}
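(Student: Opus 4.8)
The plan is to identify the Taylor coefficients of $x\,\Psi(x\sqrt{2e})$ in closed form and then compare them, coefficient by coefficient, with those of $\Phi$; since every coefficient in sight is positive and $x>0$, a coefficientwise two‑sided bound propagates directly to the sums. (As a by‑product, the bound $b_n\le c_n$ below, together with the entireness of $\Psi$, exhibits $\Phi$ as an entire function, which is the remark made just before the lemma.)

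First I would reduce $\Psi$ to a function governed by a simple ODE. Writing $\Psi(z)=1+z\,h(z/2)$ with $h(u)=e^{u^2}\int_{-\infty}^{u}e^{-t^2}\,dt$, one checks that $h$ solves $h'(u)=2u\,h(u)+1$ with $h(0)=\sqrt\pi/2$. Expanding $h(u)=\sum_{k\ge0}a_k u^k$, the ODE yields $a_0=\sqrt\pi/2$, $a_1=1$, and the two‑step recursion $a_{k+1}=\tfrac{2}{k+1}a_{k-1}$, whence $a_{2m}=\tfrac{\sqrt\pi}{2\,m!}$ and $a_{2m+1}=\tfrac{4^m m!}{(2m+1)!}$. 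Substituting $z=x\sqrt{2e}$ and multiplying by $x$ gives $x\,\Psi(x\sqrt{2e})=\sum_{n\ge1}c_n x^n$ with $c_1=1$ and $c_n=\dfrac{a_{n-2}}{2^{\,n-2}}\,(2e)^{(n-1)/2}$ for $n\ge2$. It therefore suffices to show that the coefficients $b_n=n^{(n+2)/2}/n!$ of $\Phi$ satisfy $\sqrt{e/\pi}\,c_n\le b_n\le c_n$ for all $n\ge1$.

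For $n=1$ this holds because $b_1=c_1=1$. For $n\ge2$ I would split into even and odd indices, insert the closed forms of $a_{n-2}$, and invoke the quantitative Stirling formula $n!=\sqrt{2\pi n}\,(n/e)^n e^{\epsilon_n}$ with $\tfrac{1}{12n+1}<\epsilon_n<\tfrac{1}{12n}$. A direct computation then collapses the ratios to the exact expressions
\[
\frac{b_{2m}}{c_{2m}}=\sqrt{e/\pi}\;e^{\epsilon_m-\epsilon_{2m}},\qquad \frac{b_{2m+1}}{c_{2m+1}}=\frac{1}{\sqrt\pi}\Bigl(1+\tfrac{1}{2m}\Bigr)^{m+\frac12}e^{-\epsilon_m}.
\]
In the even case $0<\epsilon_m-\epsilon_{2m}<\tfrac1{12m}-\tfrac1{24m+1}$, a quantity that decreases in $m$ with value $\tfrac{13}{300}$ at $m=1$, hence lies in $\bigl(0,\tfrac12\ln(\pi/e)\bigr)$; this gives $\sqrt{e/\pi}<b_{2m}/c_{2m}<1$ at once. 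In the odd case I would use the elementary bounds $1+\tfrac t2-\tfrac{t^2}{6}<\tfrac{(1+t)\ln(1+t)}{t}<1+\tfrac t2$ for $0<t\le1$ (an alternating‑series estimate, here with $t=\tfrac1{2m}$) together with $\tfrac1{12m+1}<\epsilon_m<\tfrac1{12m}$ to squeeze $(m+\tfrac12)\ln(1+\tfrac1{2m})-\epsilon_m$ between $\tfrac12$ and $\tfrac12\ln\pi$; exponentiating and dividing by $\sqrt\pi$ then yields exactly $\sqrt{e/\pi}<b_{2m+1}/c_{2m+1}<1$. Summing the coefficientwise inequalities against the positive powers $x^n$ finishes the proof.

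The main obstacle is sharpness. Because the lower constant $\sqrt{e/\pi}$ is attained only in the limit $n\to\infty$ (both ratios above tend to $\sqrt{e/\pi}$), the crude Stirling enclosure $1<e^{\epsilon_n}<e^{1/12}$ is by itself too lossy; one must retain the two‑sided control on $\epsilon_n$ and, in the odd case, the second‑order term in the expansion of $(1+\tfrac1{2m})^{m+1/2}$. Correspondingly, the small indices $n=2,3$ (the case $m=1$) are the delicate ones: they are what pin the constants down to exactly $\sqrt{e/\pi}$ and $1$, and they have to be checked with the sharpened inequalities rather than with the asymptotic ones.
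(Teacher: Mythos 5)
Your proof is correct and follows essentially the same route as the paper: both compare $\Phi$ term by term with the Taylor series of $x\,\Psi(x\sqrt{2e})$ and bound the coefficient ratio between $\sqrt{e/\pi}$ and $1$ via Stirling's formula. The only difference is bookkeeping: the paper writes the coefficients of $\Psi$ as $\Gamma((n+1)/2)/\Gamma(n)$ and asserts monotonicity of the ratio $n^{n/2}/\bigl(\Gamma((n+1)/2)\,(\sqrt{2e}\,)^{n-1}\bigr)$, whereas you obtain the same coefficients from an ODE for the error-function part and replace the monotonicity claim by explicit two-sided Stirling bounds with an even/odd split---thereby supplying the quantitative detail the paper leaves implicit.
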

\begin{proof}
For $x>0$ we have
\[
\Phi(x) = x \sum_{n=1}^\infty \frac{n^{n/2}}{\Gamma(n)}x^{n-1}.
\]
By Stirling's formula and monotonicity we get for $n\geq 1$
\[
\sqrt{\frac{e}{\pi}} \leq \frac{n^{n/2}}{\Gamma((n+1)/2)\,(\sqrt{2e}\,)^{n-1}} \leq 1;
\]
in fact, the upper bound is obtained for $n=1$ and the lower bound for $n\to \infty$. Thus, by observing
\[
\sum_{n=1}^\infty \frac{\Gamma((n+1)/2)}{\Gamma(n)} z^{n-1} =  1 +\frac{\sqrt{\pi}}{2} z \,e^{z^2/4} \left(1+\erf\left(\frac z2\right)\right) = \Psi(z)
\]
we get the asserted enclosure.
\end{proof}

\section*{Acknowledgements}
It is a pleasure to acknowledge that this work
has started when I attended the programme on ``Highly Oscillatory Problems'' at the Isaac Newton Institute in Cambridge. It was a great opportunity meeting Percy Deift there, who
introduced me to numerical problems related to random matrix theory (even though he was envisioning a general
numerical treatment of Painlevé transcendents, but not of Fredholm determinants). I am grateful for his advice as
well as for the communication with Herbert Spohn and Michael Prähofer who directed my search for an ``open''
numerical problem to the two-point correlation functions of the Airy and $\text{Airy}_1$ processes. I thank Patrik Ferrari who pointed me to the \citeyear{Sasa05} paper
of \citename{Sasa05}.
Given the discovery that Theorem~\ref{thm:nyconv}, which was pivotal to my study, is essentially
a long forgotten (see my discussion on p.~\pageref{ex:intro}) result of Hilbert's 1904 work, I experienced much reconciliation---please allow me this very personal statement---from reading the poem ``East Coker'' (1940), in which
 T.~S.~Eliot, that ``radical traditionalist'', described the nature of the human struggle for progress in life, art, or science:

 \smallskip

 \begin{quote}
 \ldots\ And so each venture\\
 Is a new beginning \ldots\\
 \ldots\ And what there is to conquer\\
 By strength and submission, has already been discovered\\
 Once or twice, or several times, by men whom one cannot hope\\
 To emulate---but there is no competition---\\
 There is only the fight to recover what has been lost\\
 And found and lost again and again \ldots
 \end{quote}

%

\bibliographystyle{kluwer}
\bibliography{article}

\end{document}